\pgfplotsset{compat=1.14}
\numberwithin{equation}{section}    
   \xpatchcmd{\@thm}{\fontseries\mddefault\upshape}{}{}{} 
\DeclareRobustCommand{\VEC}[1]{\boldsymbol{#1}}
  \renewcommand{\VEC}[1]{#1}%
\DeclareRobustCommand{\MAT}[1]{\boldsymbol{\mathbb{#1}}}
  \renewcommand{\MAT}[1]{#1}%
\tikzstyle{arrow} = [thick,->,>=stealth]
\definecolor{cadmiumgreen}{rgb}{0.0, 0.42, 0.24}
\definecolor{bole}{rgb}{0.47, 0.27, 0.23}
\definecolor{cyan(process)}{rgb}{0.0, 0.72, 0.92}
\definecolor{colorH1ad}{rgb}{0.803921568627451,0.36078431372549,0.36078431372549}
\definecolor{colorEnergyun}{rgb}{0.254901960784314,0.411764705882353,0.882352941176471}
\definecolor{colorEnergyad}{rgb}{1,0.647058823529412,0}
\definecolor{colorTEun}{rgb}{0.0980392156862745,0.0980392156862745,0.43921568627451}
\definecolor{colorUpad}{rgb}{0.647058823529412,0.164705882352941,0.164705882352941}
\definecolor{colorEffEnergy}{rgb}{0.729411764705882,0.333333333333333,0.827450980392157}
\definecolor{colorEffUp}{rgb}{0.580392156862745,0,0.827450980392157}
\definecolor{colorEstTot}{rgb}{0.12156862745098,0.466666666666667,0.705882352941177}
\definecolor{colorEstStress}{rgb}{1,0.498039215686275,0.0549019607843137}
\definecolor{colorEstLin}{rgb}{0.172549019607843,0.627450980392157,0.172549019607843}
\definecolor{colorEstContactn}{rgb}{0.580392156862745,0.403921568627451,0.741176470588235}
\definecolor{colorEstContactt}{rgb}{0.83921568627451,0.152941176470588,0.156862745098039}
	\newdimen\len
	\newdimen\len
\newcommand{\fric}{\mu_{\rm Coul}}
\newcommand{\gamD}{\Gamma_{\rm D}}
\newcommand{\gamN}{\Gamma_{\rm N}}
\newcommand{\gamC}{\Gamma_{\rm C}}
\newcommand{\gN}{\VEC{g}_{\rm N}}
\newcommand{\HunoD}[1]{\VEC{H}^1_{\rm D}(#1)}
\newcommand{\faces}[2]{\mathcal{F}_{#1}^{\rm #2}}
\newcommand{\vertices}[2]{\mathcal{V}_{#1}^{\rm #2}}
\newcommand{\vvvert}{\vert\kern-0.25ex\vert\kern-0.25ex\vert}
\newcommand{\norm}[1]{\vvvert #1 \vvvert}
\newcommand{\normHuno}[1]{\left\lVert #1\right\rVert_{1,\Omega}}
\newcommand{\normGamma}[1]{\left\lvert #1\right\rvert_{{\rm C},h}}
\newcommand{\normT}[1]{\left\lvert #1\right\rvert_{{\rm C},T}}
\newcommand{\energynorm}[1]{\left\lVert #1\right\rVert_{\rm en}}
\newcommand{\localdualnormresidual}[1]{{\color{black}\norm{\mathcal{R}_{\mathcal{T}_T}(#1)}_{*,\tilde{\omega}_T}}}
\newcommand{\dualnormresidual}[1]{{\color{black}\norm{\mathcal{R}(#1)}_*}}
\newtheorem{theorem}{Theorem}
\newtheorem{lemma}[theorem]{Lemma}
\theoremstyle{remark}
\newtheorem{remark}[theorem]{Remark}
\theoremstyle{definition}
\theoremstyle{definition}
\newtheorem{definition}[theorem]{Definition}
\newtheorem*{Unilateral*}{Unilateral contact problem with no friction}
\theoremstyle{plain}
\newtheorem{construction}[theorem]{Construction}
\newtheorem{assumption}[theorem]{Assumption}
\DeclareMathOperator{\tr}{tr}
\title{An a posteriori error analysis based on equilibrated stresses for finite element approximations of frictional contact}
\author[,1]{Ilaria Fontana \thanks{Corresponding author}}
\author[2]{Daniele A. Di Pietro}
\newcommand\affilcr{\protect\\ \protect\it}
\affil[1]{Department of Engineering Sciences and Applied Mathematics,\affilcr Northwestern University, Evanston, IL 60208, USA}
\affil[2]{IMAG, Univ Montpellier, CNRS, Montpellier, France}
\def\displaycancellation{}
\def\displaycomment{}
\newcounter{corr}
\definecolor{violet}{rgb}{0.580,0.,0.827}
\newcommand{\corr}[3]{\typeout{Warning : a correction remains in page \thepage}
	\stepcounter{corr}        
				      {\ifthenelse{\isundefined{\displaycancellation}}{}{\color{blue}\ifmmode\text{\,\sout{\ensuremath{#1}}\,}\else\sout{#1}\fi}}
              {\color{red}#2}
              {\ifthenelse{\isundefined{\displaycomment}}{}{\color{violet} #3}}
}
\date{}
\begin{document}

\maketitle

\begin{abstract}
    We consider the unilateral contact problem between an elastic body and a rigid foundation in a description that includes both Tresca and Coulomb friction conditions.
    For this problem, we present an a posteriori error analysis based on an equilibrated stress reconstruction in the Arnold--Falk--Winther space that includes a guaranteed upper bound distinguishing the different components of the error. This analysis is the starting point for the development of an adaptive algorithm including a stopping criterion for the generalized Newton method. This algorithm is then used to perform numerical simulations that validate the theoretical results.
\end{abstract}

\noindent
\textbf{Keywords:} frictional unilateral contact problem, weakly enforced contact conditions, a posteriori error estimate, equilibrated stress reconstruction, Arnold--Falk--Winther mixed finite element, adaptive algorithms
\medskip\\
\textbf{MSC2020 classification:} 74M15, 74S05, 65N15, 65N30, 65N50




\section{Introduction}

In the field of mechanical engineering, accounting for contact conditions with friction is indispensable for the accurate representation of the behavior of an elastic object. This is particularly crucial in structural studies, such as those involving dams, where ensuring safety demands a thorough analysis. The most frequently used friction conditions in mechanical problems are Tresca and Coulomb conditions.
In this work, we consider the Tresca or Coulomb unilateral contact problems between an elastic body and a rigid foundation.
Numerous techniques exist in the literature for approximating solutions to such problems, including penalty formulations \cite{Kikuchi1981}, mixed formulations \cite{Haslinger1996}, and methods based on the weak enforcement of contact conditions à la Nitsche \cite{Chouly-Mlika2017}.
Since the original work on Dirichlet boundary conditions \cite{Nitsche1971}, Nitsche's technique has been extended to general boundary conditions \cite{Juntunen_2009} and, more recently, to the unilateral contact problem without friction \cite{Chouly2013}; see \cite{Chouly-Mlika2017} for a review including subsequent developments. 
From the numerical standpoint, this technique is appealing as it does not require the introduction of Lagrange multipliers and results in an easily implementable formulation.
The starting point is the general formulation proposed in \cite{Araya2023}, which covers both Tresca and Coulomb friction cases.

The primary focus of this paper is the extension of the a posteriori error analysis via equilibrated stress reconstruction developed in \cite{DiPietro2022} to the frictional unilateral contact problem. This technique, presented in \cite{Ern2015} for the Poisson problem, is based on the Prager--Synge inequality \cite{Prager.Synge:47}. 
We provide a guaranteed upper bound of the dual norm of the error without using any saturation assumption, see Theorems~\ref{th:a posteriori} and \ref{th:a posteriori distinguishing} below. The presented upper bound is expressed in terms of fully computable a posteriori error estimators which distinguish the different sources of error.
Besides the data of the problem (volumetric and surface loadings), these estimators involve the approximate solution $\VEC{u}_h$ and an equilibrated stress reconstruction $\VEC{\sigma}_h$, i.e., an $\MAT{H}({\bf div})$-conforming correction of the stress tensor $\VEC{\sigma}(\VEC{u}_h)$ locally in equilibrium with the force source terms.

The paper is organized as follows. In Section~\ref{sec:problem description}, we introduce the frictional unilateral contact problem in both strong and weak forms, along with its discretization à la Nitsche.
Additionally, we provide the main space-related and mesh-related notations that we will use throughout the rest of the paper in Tables~\ref{tab:spaces notation} and \ref{tab - mesh notation}. Section~\ref{sec:a posteriori analysis} showcases the main results of the work: a basic a posteriori error estimate, a refined version distinguishing the components of the error, and an adaptive algorithm based on the latter. This algorithm includes an adaptive stopping criterion for the linearization iterations. Moreover, we compare the dual norm of the residual with the energy norm. In Section~\ref{sec:stress reconstruction} we explicitly propose how to construct an equilibrated stress reconstruction with the right properties by assembling the solutions of local problems on element patches around mesh vertices.
Section~\ref{sec:numerical results} validates the results of Section~\ref{sec:a posteriori analysis} with two numerical examples with Tresca and Coulomb friction conditions, respectively.
Finally, Section~\ref{sec:efficiency} concludes the work by presenting results of local and global efficiency.

\section{Setting}\label{sec:problem description}

\subsection{Continuous problem}

\begin{figure}[tb]
    \centering
    \includegraphics{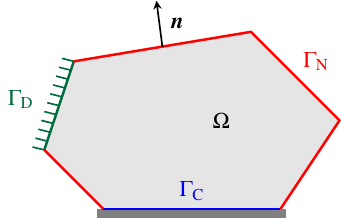}
    \caption{Example of domain $\Omega$ with $d=2$. The boundary $\partial\Omega$ is subdivided into $\gamD$ (in {\em green}), $\gamN$ (in {\em red}), and $\gamC$ (in {\em blue}).}
    \label{fig:domain_example}
\end{figure}

We consider a domain $\Omega\in\MAT{R}^d$, $d\in\{2,3\}$, which represents a body with elastic behavior, and we suppose for simplicity that $\Omega$ is a polygon if $d=2$ or a polyhedron if $d=3$, so that it can be covered exactly by a finite element mesh.
The boundary of the domain is denoted by $\partial\Omega$ and is subdivided into three nonoverlapping parts: $\gamD$, $\gamN$, and $\gamC$ (see Figure~\ref{fig:domain_example} for a two-dimensional example) such that $\lvert\gamD\rvert > 0$ and $\lvert\gamC\rvert > 0$, where $\lvert\,\cdot\,\rvert$ is the Hausdorff measure.
The setting of the problem is the following: the body is clamped at $\gamD$, it is subject to volumetric forces $\VEC{f}\in\VEC{L}^2(\Omega)$ in $\Omega$ and to surface forces $\gN\in\VEC{L}^2(\gamN)$ on the portion of the boundary $\gamN$, in the reference configuration it is in contact with a rigid foundation on $\gamC$, while in the deformed configuration the contact region is included in $\gamC$.
On the boundary $\partial\Omega$, we consider the outward unit normal vector $\VEC{n}$ that allows us to decompose any displacement field $\VEC{v}$ and any density of surface force $\VEC{\sigma}(\VEC{v}) \VEC{n}$ into their normal and tangential components:
\begin{equation}
    \VEC{v}=v^n\VEC{n}+\VEC{v}^{\VEC{t}} \qquad\qquad \text{and} \qquad\qquad \VEC{\sigma}(\VEC{v})\VEC{n}=\sigma^n(\VEC{v}) \VEC{n} + \VEC{\sigma}^{\VEC{t}}(\VEC{v}).
\end{equation}

The frictional unilateral contact problem then reads:
Find the displacement $\VEC{u}\colon \Omega\to \MAT{R}^d$ such that
\begin{subequations}\label{eq:unilateral problem}
    \begin{alignat}{3}
      \VEC{\rm div}\, 
      \VEC{\sigma}(\VEC{u})+\VEC{f}=\VEC{0}
      & \qquad & \qquad & \text{in $\Omega$}, \label{eq:unilateral equilibrium}
      \\ \label{eq:sigma}
      \VEC{\sigma}(\VEC{u}) = \lambda \tr\VEC{\varepsilon}(\VEC{u}) \VEC{I}_d + 2\mu\VEC{\varepsilon}(\VEC{u})
      & \qquad & \qquad & \text{in $\Omega$},
      \\
      \VEC{u} = \VEC{0}
      & \qquad & \qquad & \text{on $\gamD$}, \label{eq:Dirichlet condition}
      \\
      \VEC{\sigma}(\VEC{u}) \VEC{n} = \gN
      & \qquad & \qquad & \text{on $\gamN$}, \label{eq:unilateral Neumann}
      \\
      u^n\leq 0,\ \sigma^n(\VEC{u})\leq 0,\ \sigma^n(\VEC{u})\, u^n = 0
      & \qquad & \qquad & \text{on $\gamC$}, \label{eq:unilateral contact 1}
      \\
      \begin{cases}
          |\VEC{\sigma}^{\VEC{t}}(\VEC{u})| \leq S(\VEC{u}) & \qquad\text{if}\ \VEC{u}^{\VEC{t}} = \VEC{0}\\
          \VEC{\sigma}^{\VEC{t}}(\VEC{u}) = -S(\VEC{u}) \displaystyle\frac{\VEC{u}^{\VEC{t}}}{\left|\VEC{u}^{\VEC{t}}\right|} & \qquad \text{otherwise}
      \end{cases}
      & \qquad & \qquad & \text{on $\gamC$}. \label{eq:unilateral contact 2}
    \end{alignat}
\end{subequations}
Here, $\VEC{\varepsilon}(\VEC{v})\coloneqq\frac12(\VEC{\nabla v}+\VEC{\nabla v}^\top)$ is the strain tensor field, $\VEC{\sigma}(\VEC{v}) \in \mathbb{R}^{d\times d}_{\text{sym}}$ is the Cauchy stress tensor, $\VEC{\rm div}$ is the divergence operator acting row-wise on tensor valued functions, $\mu$ and $\lambda$ denote the Lam\'e parameters, and $\lvert\,\cdot\,\rvert$ in \eqref{eq:unilateral contact 2} is the Euclidian norm in $\MAT{R}^{d-1}$.

The first contact condition \eqref{eq:unilateral contact 1} is a complementary condition representing non-penetration ($u^n \leq 0$) and the absence of normal cohesive forces (if $u^n < 0$, then $\sigma^n(\VEC{u}) = 0$). 
The second contact condition represents the friction condition and makes it possible to include in this formulation both Tresca and Coulomb models (see also \cite{Araya2023}): $S(\VEC{u}) = s \in L^2(\gamC)$, $s\leq 0$, for the Tresca friction model, and $S(\VEC{u}) = -\fric\, \sigma^n(\VEC{u})$ for the Coulomb one, where $\fric \geq 0$ is the Coulomb friction coefficient.
We remark that, while Tresca friction is easier to implement and analyze since the friction threshold $s$ is a known function, Coulomb friction allows us to model the fact that there is no friction when the elastic body is not in contact with the rigid foundation in the deformed configuration.

\begin{table}[ht]
  \centering
  \begin{tabular}{p{0.25\textwidth}p{0.70\textwidth}}
    \toprule
    \multicolumn{1}{c}{\textbf{Notation}} & \multicolumn{1}{c}{\textbf{Definition}} \\
    \midrule
    $X$ & Measurable set of $\MAT{R}^d$ or $\MAT{R}^{d-1}$, typically: $\Omega$, a portion of $\partial\Omega$, or the union of a finite subset of mesh elements \\
    $H^s(X)$ & Sobolev space of index $s$ on $D$ \\ 
    $\VEC{H}^s(X)$ & Vector Sobolev space $[H^s(X)]^d$ \\ 
    $\MAT{H}^s(X)$ & Tensor Sobolev space $[H^s(X)]^{d\times d}$ \\ 
    $L^2(X)$ & Sobolev space $H^0(D)$ of square-integrable functions on $X$ \\
    $\VEC{L}^2(X)$ & Vector space $[L^2(X)]^d$ or $[L^2(X)]^{d-1}$ \\
    $\MAT{L}^2(X)$ & Tensor space $[L^2(X)]^{d\times d}$ \\
    $\lVert\,\cdot\,\rVert_{s,X}$ & Norm of $H^s(X)$ or $\VEC{H}^s(X)$ according to the argument \\
    $\lVert\,\cdot\,\rVert_X$ & Norm of $L^2(X)$, $\VEC{L}^2(X)$ or $\MAT{L}^2(X)$ according to the argument \\
    $\lVert\,\cdot\,\rVert$ & Norm of $L^2(\Omega)$, $\VEC{L}^2(\Omega)$ or $\MAT{L}^2(\Omega)$ according to the argument \\
    $(\,\cdot\, ,\,\cdot\,)_X$ & Inner product of $L^2(X)$, $\VEC{L}^2(X)$ or $\MAT{L}^2(X)$ according to the argument \\ 
    $(\,\cdot\, , \,\cdot\,)$ & Inner product of $L^2(\Omega)$, $\VEC{L}^2(\Omega)$ or $\MAT{L}^2(\Omega)$ according to the argument \\
    $\MAT{H}(\textbf{div},\Omega$) & Space spanned by functions of $\MAT{L}^2(\Omega)$ with weak (row-wise) divergence in $\VEC{L}^2(\Omega)$ \\
    \bottomrule
  \end{tabular}
  \caption{Space-related notations.}
  \label{tab:spaces notation}
\end{table}

Table~\ref{tab:spaces notation} summarizes the main space-related notations used in the paper. 
In addition, we define the space $\HunoD{\Omega}$ as the space of functions of $\VEC{H}^1(\Omega)$ satisfying the homogeneous Dirichlet boundary condition \eqref{eq:Dirichlet condition}, and the space $\VEC{K}$ of admissible displacement:
\begin{equation*}
    \HunoD{\Omega} \coloneqq \left\{\VEC{v}\in \VEC{H}^1(\Omega)\ :\ \VEC{v}=\VEC{0} \ \text{on}\ \gamD\right\},
    \qquad
    \VEC{K} \coloneqq \left\{\VEC{v}\in \HunoD{\Omega}\ :\ v^n\leq 0 \ \text{on}\ \gamC\right\}.
\end{equation*}
The weak formulation of the problem \eqref{eq:unilateral problem} is the following variational inequality (see, e.g. \cite{Haslinger1996, Chouly-Mlika2017}): Find $\VEC{u}\in\VEC{K}$ such that
\begin{equation}\label{eq:weak formulation}
    a(\VEC{u},\VEC{v}-\VEC{u}) + j(\VEC{u}; \VEC{v}) - j(\VEC{u}; \VEC{u}) \geq L(\VEC{v}-\VEC{u}) \qquad \forall\VEC{v}\in\VEC{K}, 
\end{equation}
where 
\begin{gather}
    a(\VEC{w},\VEC{v}) \coloneqq (\VEC{\sigma}(\VEC{u}),\VEC{\varepsilon}(\VEC{v})),
    \qquad
    L(\VEC{v})\coloneqq(\VEC{f},\VEC{v}) + (\gN,\VEC{v})_{\gamN}, \label{eq:definition a and L}\\ 
    j(\VEC{u}; \VEC{v}) \coloneqq (S(\VEC{u}), \lvert\VEC{v}^{\VEC{t}}\rvert)_{\gamC}.
\end{gather}
for all $(\VEC{u},\VEC{v})\in\VEC{H}^1(\Omega)\times\VEC{H}^1(\Omega)$.
It is known that this formulation has a unique solution for Tresca friction, while for Coulomb friction the analysis is more intricate. We refer to \cite{Araya2023} and the references therein for a discussion.

\subsection{Discrete problem}

We consider now a family $\{\mathcal{T}_h\}_h$ of conforming triangulations of $\Omega$, indexed by the mesh size $h\coloneqq\max_{T\in\mathcal{T}_h}h_T$, where $h_T$ is the diameter of the element $T$.
This family is assumed to be regular in the classical sense; see, e.g., \cite[Eq. (3.1.43)]{Ciarlet:02}.
Furthermore, each triangulation is conformal to the subdivision of the boundary into $\gamD$, $\gamN$, and $\gamC$ in the sense that the interior of a boundary edge (if $d=2$) or face (if $d=3$) cannot have a non-empty intersection with more than one part of the subdivision.
Mesh-related notations that will be used in the a posteriori error analysis are collected in Table~\ref{tab - mesh notation}.
For the sake of simplicity, from this point on we adopt the three-dimensional terminology and speak of faces instead of edges also in dimension $d=2$.

\begin{table}[ht]
  \centering
  \begin{tabular}{p{0.25\textwidth}p{0.70\textwidth}}
    \toprule
    \multicolumn{1}{c}{\textbf{Notation}} & \multicolumn{1}{c}{\textbf{Definition}} \\
    \midrule
    $\mathcal{F}_h$ & Set of faces of $\mathcal{T}_h$ \\
    $\faces{h}{b}$ & Set of boundary faces, i.e.,  $\{F\in\mathcal{F}_h\ :\ F\subset \partial\Omega\}$ \\
    $\faces{h}{D}\cup\faces{h}{N}\cup\faces{h}{C}$ & Partition of $\faces{h}{b}$ induced by the boundary and contact conditions \\
    $\faces{h}{i}$ & Set of interior faces, i.e., $\mathcal{F}_h\setminus \faces{h}{b}$ \\
    $\mathcal{F}_T$ & Set of faces of the element $T\in\mathcal{T}_h$, i.e., $\{F\in\mathcal{F}_h\ :\ F\subset\partial T\}$ \\
    $\mathcal{F}_T^\bullet$, $\bullet\in\{{\rm b}, {\rm D}, {\rm N}, {\rm C}\}$ & $\mathcal{F}_T\cap\mathcal{F}_h^\bullet$, $T\in\mathcal{T}_h$ \\
    $\mathcal{V}_h$ & Set of all the vertices of $\mathcal{T}_h$ \\
    $\vertices{h}{b}$ & Set of boundary vertices, i.e., $\{\VEC{a}\in\mathcal{V}_h\ :\ \VEC{a}\in\partial\Omega\}$ \\
    $\vertices{h}{i}$ & Set of interior vertices, i.e., $\mathcal{V}_h\setminus\vertices{h}{b}$ \\
    $\mathcal{V}_T$ & Set of vertices of the element $T\in\mathcal{T}_h$, i.e., $\{\VEC{a}\in\mathcal{V}_h\ :\ \VEC{a}\in\partial T\}$ \\
    $\mathcal{V}_F$ & Set of vertices of the mesh face $F\in\mathcal{F}_h$, i.e., $\{\VEC{a}\in\mathcal{V}_h\ :\ \VEC{a}\in\partial F\}$ \\
    $\vertices{h}{D}$ & Set of Dirichlet boundary vertices, i.e., $\{\VEC{a}\in\mathcal{V}_h\ :\ \VEC{a}\in F, F\in\faces{h}{D}\}$ \\
    $\omega_{\VEC{a}}$ & Union of the elements sharing the vertex $\VEC{a}\in\mathcal{V}_h$, i.e., $\displaystyle\bigcup_{T\in\mathcal{T}_h,\,\VEC{a}\in \partial T} T$ \\
    \bottomrule
  \end{tabular}
  \caption{Mesh-related notations.}
  \label{tab - mesh notation}
\end{table}

For any $X\in\mathcal{T}_h\cup\mathcal{F}_h$ mesh element or face, $\mathcal{P}^n(X)$ is the space of $d$-variate polynomials of total degree $\leq n$ defined on $X$, and we set $\VEC{\mathcal{P}}^n(X)\coloneqq[\mathcal{P}^n(X)]^d$ and $\MAT{P}^n(X) \coloneqq [\mathcal{P}^n(X)]^{d\times d}$.
We will seek the approximate solution in the standard Lagrange finite element space of degree $p\geq 1$ with strongly enforced boundary condition on $\gamD$:
\begin{equation}
    \VEC{V}_h \coloneqq \left\{ \VEC{v}_h\in \HunoD{\Omega}\ :\ \VEC{v}_h|_T\in \VEC{\mathcal{P}}^p(T) \ \text{for any}\ T\in\mathcal{T}_h \right\}.
\end{equation}

The key idea of the method we focus on consists in rewriting the contact boundary conditions \eqref{eq:unilateral contact 1} and \eqref{eq:unilateral contact 2} in a compact way and enforcing them à la Nitsche.
For this purpose, we introduce the projector $[x]_{\MAT{R}^-} = \frac{1}{2} (x-\lvert x\rvert)$ on the half-line of negative numbers $\MAT{R}^-$, and the orthogonal projector $[\VEC{x}]_{\alpha} \colon \MAT{R}^{d-1} \to \MAT{R}^{d-1}$ on the $(d-1)$-dimensional ball $B(\VEC{0},\alpha)$ centered in $\VEC{0}$ with radius $\alpha > 0$, i.e., 
\begin{equation}
    [\VEC{x}]_{\alpha} = \begin{cases}
        \VEC{x} & \qquad\text{if}\ \lvert\VEC{x}\rvert \leq \alpha, \\
        \alpha \displaystyle\frac{\VEC{x}}{\lvert\VEC{x}\rvert} & \qquad\text{otherwise}.
    \end{cases}
\end{equation}
In addition, for every real number $\theta$ and every positive bounded function $\gamma\colon \gamC\to\mathbb{R}^+$, we define the following linear operators \cite{Chouly-Mlika2017}:
\begin{equation}
    \begin{aligned}[t]
        P_{\theta,\gamma}^n \colon \VEC{W} &\to L^2(\gamC) \\
        \VEC{v} &\mapsto \theta\sigma^n(\VEC{v})-\gamma v^n,
    \end{aligned}
    \qquad\qquad\text{and}\qquad\qquad
    \begin{aligned}[t]
        \VEC{P}_{\theta,\gamma}^{\VEC{t}} \colon \VEC{W} &\to \VEC{L}^2(\gamC) \\
        \VEC{v} &\mapsto \theta\VEC{\sigma}^{\VEC{t}}(\VEC{v})-\gamma \VEC{v}^{\VEC{t}},
    \end{aligned}
\end{equation}
where $\VEC{W}\coloneqq\left\{\VEC{v}\in\VEC{H}^1(\Omega)\ :\ \VEC{\sigma}(\VEC{v})\VEC{n}|_{\gamC}\in\VEC{L}^2(\gamC)\right\}$ (notice that $\VEC{V}_h \subset \VEC{W}$).
Assuming that $\VEC{u}\in\VEC{W}$, the two contact conditions \eqref{eq:unilateral contact 1} and \eqref{eq:unilateral contact 2} can be rewritten as follows (see \cite{Curnier1988, Chouly2013, Chouly2022}):
\begin{align}
    \sigma^n(\VEC{u}) =& \left[\sigma^n(\VEC{u})-\gamma u^n\right]_{\mathbb{R}^-} = \left[P_{1,\gamma}^n(\VEC{u})\right]_{\mathbb{R}^-}, \label{eq:normal contact P} \\
    \VEC{\sigma}^{\VEC{t}}(\VEC{u}) =& \left[\VEC{\sigma}^{\VEC{t}}(\VEC{u}) - \gamma\VEC{u}^{\VEC{t}}\right]_{S(\VEC{u})} = \left[\VEC{P}^{\VEC{t}}_{1,\gamma}(\VEC{u})\right]_{S(\VEC{u})}
\end{align}

From now on, we assume that $\gamma$ is the positive piecewise constant function on $\gamC$ which satisfies:
For all $T\in\mathcal{T}_h$ such that $\lvert \partial T\cap\gamC\rvert >0$,
\begin{equation*}
    \gamma|_{\partial T\cap \gamC} = \frac{\gamma_0}{h_T},
\end{equation*}
where $\gamma_0 > 0$ is a fixed \emph{Nitsche parameter}.
Finally, we approximate the problem \eqref{eq:unilateral problem} with the following method \cite{Chouly2014}:
Find $\VEC{u}_h\in\VEC{V}_h$ such that
\begin{equation}\label{eq:Nitsche-based_method}
  a(\VEC{u}_h,\VEC{v}_h)
  - \left(\left[P_{1,\gamma}^n(\VEC{u}_h)\right]_{\mathbb{R}^-}, v_h^n \right)_{\gamC} - \left(\left[\VEC{P}_{1,\gamma}^{\VEC{t}}(\VEC{u}_h)\right]_{S_h(\VEC{u}_h)}, \VEC{v}_h^{\VEC{t}}\right)_{\gamC}
  = L(\VEC{v}_h) \qquad \forall \VEC{v}_h\in\VEC{V}_h,
\end{equation}
where $S_h(\VEC{u}_h)$ depends on the choice of $S(\VEC{u})$: for the Tresca case $S_h(\VEC{u}_h) = s$ while, for the Coulomb case, using again \eqref{eq:normal contact P},  $S_h(\VEC{u}_h) = -\fric\, [P_{1,\gamma}^n(\VEC{u}_h)]_{\MAT{R}^-}$, see \cite{Araya2023}.
For further results concerning the existence and uniqueness of a solution we refer to \cite{Chouly2014, Chouly2022}.
Notice that \eqref{eq:Nitsche-based_method} is the non-symmetric variant of the Nitsche method corresponding to the choice $\theta = 0$ in the above references.

\begin{remark}[Choice of normal contact conditions]
    In this work, we will focus on the unilateral contact problem with no jump on $\gamC$ between $\Omega$ and the rigid foundation, but it is also possible to adapt the present analysis to other cases by replacing the contact condition \eqref{eq:unilateral contact 2}:
    \begin{itemize}
        \item for the bilateral contact problem, we simply replace it with the condition $u^n = 0$;
        \item for the unilateral contact problem with normal gap $g$ on $\gamC$ in the reference configuration, we replace it with the three conditions
        \begin{equation}
            u^n - g\leq 0,\ \sigma^n(\VEC{u})\leq 0,\ \sigma^n(\VEC{u}) (u^n-g) = 0.
        \end{equation}
    \end{itemize}
    In the first case, we do not need to introduce the projection operator $[\,\cdot\,]_{\MAT{R}^-}$ and equation \eqref{eq:normal contact P} becomes $\sigma^n(\VEC{u}) = P_{1,\gamma}^n(\VEC{u})$ while, in the second case, the definition of $P_{\theta,\gamma}^n$ has to be modified by replacing $v^n$ with $v^n-g$.
\end{remark}

\section{A posteriori error analysis}\label{sec:a posteriori analysis}

The goal of this section is to present an a posteriori error estimate based on the notion of equilibrated stress reconstruction.
In this framework, following the approach of \cite{DiPietro2022}, we measure the error associated with the approximate solution $\VEC{u}_h$ using the dual norm of a residual operator.

\subsection{Basic a posteriori error estimate}

Starting from the discrete problem \eqref{eq:Nitsche-based_method} and denoting by $(\HunoD{\Omega})^*$ the dual space of $\HunoD{\Omega}$, for all discrete function $\VEC{w}_h\in\VEC{V}_h$, we define the {\em residual} $\mathcal{R}(\VEC{w}_h)\in (\HunoD{\Omega})^*$ by its action on the space $\HunoD{\Omega}$:
\begin{equation}\label{eq:residual definition}
    \begin{split}
        \left\langle \mathcal{R}(\VEC{w}_h),\VEC{v}\right\rangle &\coloneqq 
        L(\VEC{v})-a(\VEC{w}_h,\VEC{v})+\left(\left[P_{1,\gamma}^n(\VEC{w}_h)\right]_{\mathbb{R}^-},v^n\right)_{\gamC} + \left(\left[\VEC{P}_{1,\gamma}^{\VEC{t}}(\VEC{w}_h)\right]_{S_h(\VEC{w}_h)}, \VEC{v}_h^{\VEC{t}}\right)_{\gamC}
    \end{split}
\end{equation}
for all $\VEC{v}\in \HunoD{\Omega}$. Here, $\langle\,\cdot\,,\,\cdot\,\rangle$ denotes the duality pairing between $\HunoD{\Omega}$ and $(\HunoD{\Omega})^*$.
Let
\begin{equation}\label{eq:triple norm}
    \norm{\VEC{v}}^2 \coloneqq \lVert \VEC{\nabla}\VEC{v}\rVert^2 + \normGamma{\VEC{v}}^2 \qquad \forall \VEC{v}\in \HunoD{\Omega},
\end{equation}
with
\begin{equation}\label{eq:seminorm C}
    \normGamma{\VEC{v}}^2 \coloneqq \sum_{F\in \faces{h}{C}} \frac{1}{h_F} \lVert\VEC{v}\rVert_F^2  \qquad \forall \VEC{v}\in \HunoD{\Omega}.
\end{equation}
Given $\VEC{w}_h \in \VEC{V}_h$, the dual norm of the residual $\mathcal{R}(\VEC{w}_h)$ on the normed space $(\HunoD{\Omega}, \norm{\,\cdot\,})$ is given by
\begin{equation}\label{eq:dual norm definition}
  \dualnormresidual{\VEC{w}_h}
  \coloneqq \sup_{\substack{\VEC{v}\in \HunoD{\Omega},\, \norm{\VEC{v}} = 1}}
  \left\langle \mathcal{R}(\VEC{w}_h),\VEC{v}\right\rangle,
\end{equation}
and the quantity $\dualnormresidual{\VEC{u}_h}$ can be used as a measure of the error committed approximating the exact solution $\VEC{u}$ with $\VEC{u}_h$.

\begin{definition}[Equilibrated stress reconstruction]\label{def:equilibrated stress reconstruction}
    We will call \emph{equilibrated stress reconstruction} any tensor-valued field $\VEC{\sigma}_h:\Omega\mapsto\mathbb{R}^{d\times d}$ such that:
    \begin{enumerate}
        \item $\VEC{\sigma}_h\in \mathbb{H}(\textbf{div},\Omega)$,
        \item $\left(\VEC{\rm div}\, \VEC{\sigma}_h+\VEC{f},\VEC{v}\right)_T=0$ for every $\VEC{v}\in \VEC{\mathcal{P}}^0(T)$ and every $T\in\mathcal{T}_h$,
        \item $(\VEC{\sigma}_h\VEC{n})|_F\in \VEC{L}^2(F)$ for every $F\in \faces{h}{N} \cup \faces{h}{C}$ and $\left(\VEC{\sigma}_h\VEC{n},\VEC{v}\right)_F = \left(\gN,\VEC{v}\right)_F$ for every $\VEC{v}\in \VEC{\mathcal{P}}^0(F)$ and  every $F\in \faces{h}{N}$.
    \end{enumerate}
\end{definition}

Given an equilibrated stress reconstruction $\VEC{\sigma}_h$, for every element $T\in \mathcal{T}_h$, we define the following local error estimators:
\[
\begin{aligned}
    \eta_{\text{osc},T} &\coloneqq \frac{h_T}{\pi} \lVert \VEC{f}+\VEC{\rm div}\, 
    \VEC{\sigma}_h \rVert_T,
    &\qquad&\text{(oscillation)}
    \\
    \eta_{\text{str},T} &\coloneqq \lVert \VEC{\sigma}_h-\VEC{\sigma}(\VEC{u}_h) \rVert_T,
    &\qquad&\text{(stress)}
    \\
    \eta_{\text{Neu},T} &\coloneqq \sum_{F\in \faces{T}{N}} C_{t,T,F}  h_F^{\nicefrac{1}{2}} \lVert \gN-\VEC{\sigma}_h \VEC{n} \rVert_F,
    &\qquad&\text{(Neumann)}
    \\
    \eta_{\text{cnt},T} &\coloneqq \sum_{F\in \faces{T}{C}} h_F^{\nicefrac{1}{2}} \left\lVert \left[P_{1,\gamma}^n(\VEC{u}_h)\right]_{\mathbb{R}^-}-\sigma^n_h \right\rVert_F.
    &\qquad&\text{(normal contact)} 
    \\
    \eta_{\text{frc},T} &\coloneqq \sum_{F\in \faces{T}{C}} h_F^{\nicefrac{1}{2}} \left\lVert \left[\VEC{P}_{1,\gamma}^{\VEC{t}}(\VEC{u}_h)\right]_{S_h(\VEC{u}_h)} - \VEC{\sigma}^{\VEC{t}}_h\right\rVert_F,
    &\qquad&\text{(friction)}
\end{aligned}
\]
where, $C_{t,T,F}$ is the constant of the trace inequality $\lVert\VEC{v}-\overline{\VEC{v}}_F\rVert_F \leq C_{t,T,F} h_F^{\nicefrac{1}{2}} \lVert\VEC{\nabla v}\rVert_T$ with $\overline{\VEC{v}}_F\coloneq\frac{1}{|F|}\int_F\VEC{v}$, valid for every $\VEC{v}\in \VEC{H}^1(T)$ and any $F\in\mathcal{F}_T$ (see \cite[Theorem 4.6.3]{Vohralik2015} or \cite[Section 1.4]{Di-Pietro.Ern:12}).

The estimator $\eta_{\text{osc},T}$ represents the residual of the volumetric force balance equation \eqref{eq:unilateral equilibrium} inside the element $T$, $\eta_{\text{str},T}$ the difference between the Cauchy stress tensor computed from $\VEC{u}_h$ and the equilibrated stress reconstruction, $\eta_{\text{Neu},T}$ the residual of the Neumann boundary condition \eqref{eq:unilateral Neumann}, $\eta_{\text{cnt},T}$ the residual of the normal condition \eqref{eq:unilateral contact 1} on the contact boundary, and $\eta_{\text{frc},T}$ the residual of the friction condition.

The following result shows a guaranteed upper bound of the dual norm of the residual \eqref{eq:residual definition} based on these local estimators.

\begin{theorem}[A posteriori error estimate for the dual norm of the residual]\label{th:a posteriori}
    Let $\VEC{u}_h$ be the solution of \eqref{eq:Nitsche-based_method}, $\mathcal{R}(\VEC{u}_h)$ the residual defined by \eqref{eq:residual definition}, and $\VEC{\sigma}_h$ an equilibrated stress reconstruction in the sense of Definition \ref{def:equilibrated stress reconstruction}. Then,
    \begin{equation*}
        \dualnormresidual{\VEC{u}_h}
        \leq \Biggl(\sum_{T\in\mathcal{T}_h} \Bigl((\eta_{\emph{osc},T}+\eta_{\emph{str},T}+\eta_{\emph{Neu},T})^2 + (\eta_{\emph{cnt},T}+\eta_{\emph{frc},T}
        )^2 \Bigr) \Biggr)^{\nicefrac{1}{2}}.
    \end{equation*}
\end{theorem}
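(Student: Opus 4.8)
The plan is to insert the equilibrated stress reconstruction $\VEC{\sigma}_h$ into the residual, integrate by parts, and estimate the resulting terms one by one. First, fix $\VEC{v}\in\HunoD{\Omega}$ with $\norm{\VEC{v}}=1$. Since the Cauchy stress $\VEC{\sigma}(\VEC{u}_h)$ is pointwise symmetric, $a(\VEC{u}_h,\VEC{v})=(\VEC{\sigma}(\VEC{u}_h),\VEC{\nabla v})$; writing $\VEC{\sigma}(\VEC{u}_h)=(\VEC{\sigma}(\VEC{u}_h)-\VEC{\sigma}_h)+\VEC{\sigma}_h$ and integrating $(\VEC{\sigma}_h,\VEC{\nabla v})$ by parts element by element, the interior-face contributions cancel because the normal trace of $\VEC{\sigma}_h\in\MAT{H}(\textbf{div},\Omega)$ is single-valued and so is that of $\VEC{v}\in\VEC{H}^1(\Omega)$, the faces of $\faces{h}{D}$ drop out since $\VEC{v}=\VEC{0}$ there, and by property~(3) of Definition~\ref{def:equilibrated stress reconstruction} the remaining boundary pairings on $\faces{h}{N}\cup\faces{h}{C}$ are genuine $L^2$ products. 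Decomposing $\VEC{\sigma}_h\VEC{n}=\sigma^n_h\VEC{n}+\VEC{\sigma}^{\VEC{t}}_h$ and $\VEC{v}=v^n\VEC{n}+\VEC{v}^{\VEC{t}}$ on each contact face and recalling $L(\VEC{v})=(\VEC{f},\VEC{v})+(\gN,\VEC{v})_{\gamN}$, the residual \eqref{eq:residual definition} rearranges into a sum of five computable contributions, one per estimator: the volumetric residual $(\VEC{f}+\VEC{\rm div}\,\VEC{\sigma}_h,\VEC{v})$, the stress mismatch $(\VEC{\sigma}_h-\VEC{\sigma}(\VEC{u}_h),\VEC{\nabla v})$, the Neumann residual $\sum_{F\in\faces{h}{N}}(\gN-\VEC{\sigma}_h\VEC{n},\VEC{v})_F$, the normal-contact residual $\sum_{F\in\faces{h}{C}}\big([P_{1,\gamma}^n(\VEC{u}_h)]_{\mathbb{R}^-}-\sigma^n_h,v^n\big)_F$, and the friction residual $\sum_{F\in\faces{h}{C}}\big([\VEC{P}_{1,\gamma}^{\VEC{t}}(\VEC{u}_h)]_{S_h(\VEC{u}_h)}-\VEC{\sigma}^{\VEC{t}}_h,\VEC{v}^{\VEC{t}}\big)_F$.

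Second, I bound each contribution. For the volumetric term, property~(2) lets me subtract from $\VEC{v}$, on each $T$, its mean $\overline{\VEC{v}}_T\in\VEC{\mathcal{P}}^0(T)$; Cauchy--Schwarz and the Poincaré--Payne--Weinberger inequality $\lVert\VEC{v}-\overline{\VEC{v}}_T\rVert_T\le\frac{h_T}{\pi}\lVert\VEC{\nabla v}\rVert_T$ (valid since simplices are convex) give $|(\VEC{f}+\VEC{\rm div}\,\VEC{\sigma}_h,\VEC{v})_T|\le\eta_{\text{osc},T}\,\lVert\VEC{\nabla v}\rVert_T$. The stress term is controlled by a plain Cauchy--Schwarz, $|(\VEC{\sigma}_h-\VEC{\sigma}(\VEC{u}_h),\VEC{\nabla v})_T|\le\eta_{\text{str},T}\,\lVert\VEC{\nabla v}\rVert_T$. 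For the Neumann term, property~(3) lets me subtract the face mean $\overline{\VEC{v}}_F$, and Cauchy--Schwarz followed by the trace inequality defining $C_{t,T,F}$ gives, for each $F\in\faces{T}{N}$, $|(\gN-\VEC{\sigma}_h\VEC{n},\VEC{v})_F|\le C_{t,T,F}\,h_F^{\nicefrac{1}{2}}\lVert\gN-\VEC{\sigma}_h\VEC{n}\rVert_F\,\lVert\VEC{\nabla v}\rVert_T$, hence $\sum_{F\in\faces{T}{N}}|(\cdot)_F|\le\eta_{\text{Neu},T}\,\lVert\VEC{\nabla v}\rVert_T$. For the two contact terms, Cauchy--Schwarz on each $F\in\faces{h}{C}$, the bounds $\lVert v^n\rVert_F\le\lVert\VEC{v}\rVert_F$ and $\lVert\VEC{v}^{\VEC{t}}\rVert_F\le\lVert\VEC{v}\rVert_F$, and the insertion of $h_F^{\nicefrac{1}{2}}h_F^{-\nicefrac{1}{2}}$ produce $h_F^{\nicefrac{1}{2}}\big\lVert[P_{1,\gamma}^n(\VEC{u}_h)]_{\mathbb{R}^-}-\sigma^n_h\big\rVert_F\cdot h_F^{-\nicefrac{1}{2}}\lVert\VEC{v}\rVert_F$ and the analogous tangential quantity.

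Third, I collect. The volumetric, stress, and Neumann estimates sum element by element to $\sum_{T\in\mathcal{T}_h}(\eta_{\text{osc},T}+\eta_{\text{str},T}+\eta_{\text{Neu},T})\lVert\VEC{\nabla v}\rVert_T$, which a discrete Cauchy--Schwarz over $\mathcal{T}_h$ bounds by $\big(\sum_T(\eta_{\text{osc},T}+\eta_{\text{str},T}+\eta_{\text{Neu},T})^2\big)^{\nicefrac{1}{2}}\lVert\VEC{\nabla v}\rVert$. For the contact/friction contributions I group the face terms by the (unique) element containing each $F\in\faces{h}{C}$; for a fixed $T$, a discrete Cauchy--Schwarz over $\faces{T}{C}$ combined with the elementary inequality $\big(\sum_i a_i^2\big)^{\nicefrac{1}{2}}\le\sum_i a_i$ for $a_i\ge0$ yields a bound of the form $(\eta_{\text{cnt},T}+\eta_{\text{frc},T})\,\normT{\VEC{v}}$, where $\normT{\VEC{v}}^2\coloneqq\sum_{F\in\faces{T}{C}}h_F^{-1}\lVert\VEC{v}\rVert_F^2$ is the local contribution to $\normGamma{\VEC{v}}^2$; a second discrete Cauchy--Schwarz over $\mathcal{T}_h$ then bounds this part by $\big(\sum_T(\eta_{\text{cnt},T}+\eta_{\text{frc},T})^2\big)^{\nicefrac{1}{2}}\normGamma{\VEC{v}}$. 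A final Cauchy--Schwarz in $\mathbb{R}^2$, together with $\lVert\VEC{\nabla v}\rVert^2+\normGamma{\VEC{v}}^2=\norm{\VEC{v}}^2=1$ from \eqref{eq:triple norm}, yields the stated inequality after taking the supremum over such $\VEC{v}$.

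The argument is essentially careful bookkeeping, so I do not anticipate a genuine obstacle; the step demanding the most attention is the integration by parts, where one must (i) justify the cancellation of the interior-face normal-trace jumps of $\VEC{\sigma}_h$ (exactly $\MAT{H}(\textbf{div},\Omega)$-conformity of $\VEC{\sigma}_h$ combined with $\VEC{H}^1$-conformity of $\VEC{v}$), (ii) use property~(3) to turn the boundary pairings on $\gamN\cup\gamC$ into $L^2$ products and split them into normal and tangential parts, and (iii) keep the two families of terms — those controlled by $\lVert\VEC{\nabla v}\rVert_T$ and those controlled by the local contact seminorm $\normT{\VEC{v}}$ — separate until the very last Cauchy--Schwarz in $\mathbb{R}^2$, since it is precisely this separation that produces the grouping $(\eta_{\text{osc},T}+\eta_{\text{str},T}+\eta_{\text{Neu},T})^2+(\eta_{\text{cnt},T}+\eta_{\text{frc},T})^2$ appearing in the theorem.
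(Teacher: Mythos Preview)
Your proposal is correct and follows essentially the same approach as the paper's proof: insert $\VEC{\sigma}_h$ via integration by parts to obtain the same five terms $\mathfrak{T}_1,\ldots,\mathfrak{T}_5$, bound each one against $\lVert\VEC{\nabla v}\rVert_T$ or $\normT{\VEC{v}}$ exactly as you describe, and conclude with Cauchy--Schwarz. The only cosmetic differences are that the paper cites \cite[Theorem~4]{DiPietro2022} for the bounds on $\mathfrak{T}_1$--$\mathfrak{T}_4$ (which you spell out explicitly), and that the paper applies a single Cauchy--Schwarz in $\mathbb{R}^{2|\mathcal{T}_h|}$ at the end whereas you split it into two Cauchy--Schwarz inequalities over $\mathcal{T}_h$ followed by one in $\mathbb{R}^2$---both routes yield the same final bound.
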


\begin{proof}
    Using the regularity of $\VEC{\sigma}_h$ and of its normal trace established by Properties 1. and 3. in Definition \ref{def:equilibrated stress reconstruction}, the following integration by parts formula holds:
    \begin{equation}\label{eq:Green's formula}
      (\VEC{\sigma}_h, \VEC{\nabla v})
      + \left(\VEC{\rm div}\, \VEC{\sigma}_h, \VEC{v}\right)
      - (\VEC{\sigma}_h\VEC{n}, \VEC{v})_{\gamN}
      - (\sigma_h^n, v^n)_{\gamC}
      -(\VEC{\sigma}_h^{\VEC{t}}, \VEC{v^t})_{\gamC}
      = 0
      \qquad \forall \VEC{v}\in \HunoD{\Omega}.
    \end{equation}
    Now, fix $\VEC{v}\in \HunoD{\Omega}$ such that $\norm{\VEC{v}}^2 = \lVert\nabla\VEC{v}\rVert^2 + \normGamma{\VEC{v}}^2 = 1$ and consider the argument of the supremum in the definition \eqref{eq:dual norm definition} of the dual norm of the residual.
    Expanding $L(\cdot)$ and $a(\cdot,\cdot)$ according to \eqref{eq:definition a and L} in the definition \eqref{eq:residual definition} of the residual written for $\VEC{v}_h = \VEC{u}_h$ and summing \eqref{eq:Green's formula} to the resulting expression, we obtain
    \begin{equation*}
    \begin{split}
        \langle \mathcal{R}(\VEC{u}_h), \VEC{v}\rangle
        &= (\VEC{f} + \VEC{\rm div}\, \VEC{\sigma}_h,\VEC{v})
        + (\VEC{\sigma}_h-\VEC{\sigma}(\VEC{u}_h),\VEC{\nabla}\VEC{v})
        + (\gN-\VEC{\sigma}_h\VEC{n},\VEC{v})_{\gamN} \\
        &\quad
        + \left( \left[P_{1,\gamma}^n(\VEC{u}_h) \right]_{\mathbb{R}^-} - \sigma^n_h,v^n \right)_{\gamC}
        + \left(\left[\VEC{P}_{1,\gamma}^{\VEC{t}}(\VEC{u}_h)\right]_{S_h(\VEC{u}_h)} - \VEC{\sigma}^{\VEC{t}}_h,\VEC{v^t}\right)_{\gamC}
        \eqcolon\mathfrak{T}_1+\cdots+\mathfrak{T}_5,
    \end{split}
    \end{equation*}
    where we have additionally used the symmetry of $\VEC{\sigma}_h$ to replace $\VEC{\varepsilon}(\VEC{v})$ with $\VEC{\nabla}\VEC{v}$ in the second term.
    The first four terms can be treated as in \cite[Theorem 4]{DiPietro2022}, obtaining
    \begin{equation*}
        \begin{aligned}
            \mathfrak{T}_1 \leq \sum_{T\in \mathcal{T}_h} \eta_{\text{osc},T} \lVert \VEC{\nabla v}\rVert_T, 
            \qquad\qquad&\qquad\qquad
            \mathfrak{T}_2 \leq \sum_{T\in\mathcal{T}_h} \eta_{\text{str},T} \lVert \VEC{\nabla v}\rVert_T, \\
            \mathfrak{T}_3 \leq \sum_{T\in\mathcal{T}_h} \eta_{\text{Neu},T} \lVert\VEC{\nabla v}\rVert_T,
            \qquad\qquad&\qquad\qquad
            \mathfrak{T}_4 \leq \sum_{T\in\mathcal{T}_h} \eta_{\text{cnt},T} \normT{\VEC{v}},
        \end{aligned}
    \end{equation*}
    where $\normT{\,\cdot\,}$ is the local counterpart of the seminorm \eqref{eq:seminorm C} on the element $T\in\mathcal{T}_h$ obtained replacing $\faces{h}{C}$ with $\faces{T}{C}$ in the sum.
    Let us consider the fifth term $\mathfrak{T}_5$. Using the Cauchy-Schwarz inequality, we get
    \begin{equation*}
        \begin{aligned}
            \mathfrak{T}_5 &\leq \sum_{T\in \mathcal{T}_h} \sum_{F\in \faces{T}{C}} \left\lVert \left[\VEC{P}_{1,\gamma}^{\VEC{t}}(\VEC{u}_h) \right]_{S_h(\VEC{u}_h)} - \VEC{\sigma}^{\VEC{t}}_h\right\rVert_F \lVert \VEC{v}^{\VEC{t}}\rVert_F  \\
            &\leq \sum_{T\in\mathcal{T}_h} \sum_{F\in \faces{T}{C}} h_F^{\nicefrac{1}{2}} \left\lVert \left[\VEC{P}_{1,\gamma}^{\VEC{t}}(\VEC{u}_h) \right]_{S_h(\VEC{u}_h)} - \VEC{\sigma}^{\VEC{t}}_h\right\rVert_F \normT{\VEC{v}}  = \sum_{T\in\mathcal{T}_h} \eta_{\text{frc},T} \normT{\VEC{v}}.
        \end{aligned}
    \end{equation*}

    Combining the above results and using the Cauchy-Schwarz inequality and the definition of the norm $\norm{\,\cdot\,}$ \eqref{eq:triple norm}, we conclude
    \begin{equation*}
        \begin{aligned}
            \dualnormresidual{\VEC{u}_h}
            &\leq \sup_{\substack{\VEC{v}\in \HunoD{\Omega},\, \norm{\VEC{v}} = 1}} \Biggl\{\sum_{T\in\mathcal{T}_h} \Bigl( \eta_{a,T} \lVert \VEC{\nabla}\VEC{v}\rVert_T + \eta_{b,T}
            \normT{\VEC{v}}\Bigr)\Biggr\}  \\
            &\leq \sup_{\substack{\VEC{v}\in \HunoD{\Omega},\, \norm{\VEC{v}} = 1}} \Biggl\{\Biggl(\sum_{T\in\mathcal{T}_h}\left((\eta_{a,T})^2 + (\eta_{b,T})^2 \right)\Biggr)^{\nicefrac{1}{2}} \Biggl(\sum_{T\in\mathcal{T}_h} \left(\lVert \VEC{\nabla}\VEC{v}\rVert_T^2 + \normT{\VEC{v}}^2 \right) \Biggr)^{\nicefrac{1}{2}} \Biggr\}  \\
            &= \Biggl(\sum_{T\in\mathcal{T}_h}\Bigl((\eta_{\text{osc},T}+\eta_{\text{str},T}+\eta_{\text{Neu},T})^2 + (\eta_{\text{cnt},T}+\eta_{\text{frc},T})^2 \Bigr) \Biggr)^{\nicefrac{1}{2}},
        \end{aligned}
    \end{equation*}
    where, for sake of brevity, $\eta_{a,T}\coloneqq \eta_{\text{osc},T}+\eta_{\text{str},T}+\eta_{\text{Neu},T}$ and $\eta_{b,T} \coloneqq \eta_{\text{cnt},T}+\eta_{\text{frc},T}$ for all $T\in\mathcal{T}_h$.
\end{proof}

\subsection{Separating the error components}\label{subsec:a posteriori distinguishing}

In order to find numerically the approximate solution $\VEC{u}_h$, we apply an iterative method to the nonlinear problem \eqref{eq:Nitsche-based_method}. In particular, in this work we consider the ``generalized Newton method'', also employed in \cite{Chouly-Mlika2017}, where no special treatment is done to account for the fact the projection operators $[\,\cdot\,]_{\MAT{R}^-}$ and $[\,\cdot\,]_{S_h(\VEC{u}_h)}$ are not Gateaux-differentiable.
At each Newton iteration $k \geq 1$, we have to solve the linear problem: 
Find $\VEC{u}_h^k\in\VEC{V}_h$ such that
\begin{equation}\label{eq:linearized problem}
  a(\VEC{u}_h^k,\VEC{v}_h) - \left(P_{\text{lin}}^{n,k-1}(\VEC{u}_h^k),v_h^n\right)_{\gamC} - \left(\VEC{P}_{\text{lin}}^{\VEC{t},k-1}(\VEC{u}_h^k),\VEC{v}_h^{\VEC{t}}\right)_{\gamC} = L(\VEC{v}_h) \qquad \forall\VEC{v}_h\in\VEC{V}_h,
\end{equation}
where the linearized operators are obtained setting
\begin{equation}\label{eq:P lin n}
    \begin{split}
        P_{\text{lin}}^{n,k-1}(\VEC{w}_h) \coloneqq& \left[P_{1,\gamma}^n(\VEC{u}_h^{k-1}) \right]_{\MAT{R}^-} + \frac{\partial \left[P_{1,\gamma}^n (\VEC{v}) \right]_{\MAT{R}^-}}{\partial \VEC{v}} \Bigg|_{\VEC{v}=\VEC{u}_h^{k-1}} \cdot (\VEC{w}_h-\VEC{u}_h^{k-1})  \\
        =& \begin{cases}
            0 &\qquad \text{if}\ P_{1,\gamma}^n(\VEC{u}_h^{k-1}) \leq 0 \\
            P_{1,\gamma}^n(\VEC{w}_h) &\qquad \text{otherwise}
        \end{cases}
    \end{split}
\end{equation}
and
\begin{equation}\label{eq:P lin t}
    \begin{split}
        \VEC{P}_{\text{lin}}^{\VEC{t},k-1}(\VEC{w}_h) \coloneqq& \left[\VEC{P}_{1,\gamma}^{\VEC{t}}(\VEC{u}_h^{k-1}) \right]_{S_h(\VEC{u}_h^{k-1})} + \frac{\partial \left[\VEC{P}_{1,\gamma}^{\VEC{t}} (\VEC{v}) \right]_{S_h(\VEC{u}_h^{k-1})}}{\partial \VEC{v}} \Bigg|_{\VEC{v}=\VEC{u}_h^{k-1}} \cdot (\VEC{w}_h-\VEC{u}_h^{k-1})  \\
        =& \left[\VEC{P}_{1,\gamma}^{\VEC{t}}(\VEC{u}_h^{k-1}) \right]_{S_h(\VEC{u}_h^{k-1})} + \frac{\mathrm{d} \left[\VEC{x}\right]_{S_h(\VEC{u}_h^{k-1})}}{\mathrm{d}\VEC{x}} \Bigg|_{\VEC{x}=\VEC{P}_{1,\gamma}^{\VEC{t}}(\VEC{u}_h^{k-1})} \left(\VEC{P}_{1,\gamma}^{\VEC{t}}(\VEC{w}_h)-\VEC{P}_{1,\gamma}^{\VEC{t}}(\VEC{u}_h^{k-1})\right).
    \end{split}
\end{equation}

\begin{figure}[tb]
	\centering
	\begin{subfigure}{0.49\textwidth}
		\centering
            \includegraphics{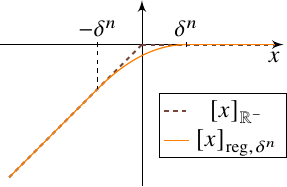}
	\end{subfigure}
	\hfill
	\begin{subfigure}{0.49\textwidth}
		\centering
		\includegraphics{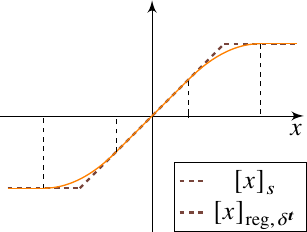}
	\end{subfigure}
    \caption{Regularized operators for $d=2$ and constant Tresca friction.}
    \label{fig:regularization}
\end{figure}

\begin{remark}[Possible regularization of projection operators]
    Another possible iterative approach consists in first regularizing the projection operators $[\,\cdot\,]_{\MAT{R}^-}$ and $[\,\cdot\,]_{S_h(\VEC{u}_h^{k-1})}$ and then applying the standard Newton method. For example, introducing two regularization parameters $\delta^n > 0$ and $\delta^{\VEC{t}} > 0$, we can define the regularized differentiable operators represented in Figure \ref{fig:regularization} for the case $d=2$. In this work, for the sake of simplicity, we only consider the analysis without regularization and refer to \cite{DiPietro2022} for a detailed treatment in the frictionless case.
\end{remark}

\begin{assumption}[Decomposition of the stress reconstruction]\label{assumption stress reconstruction}
  Let $\VEC{\sigma}_h^k$ be an equilibrated stress reconstruction in the sense of Definition~\ref{def:equilibrated stress reconstruction}.
  Then, $\VEC{\sigma}_h^k$ %
  can be decomposed into two parts 
  \begin{equation}\label{eq:sigmah decomposition}
    \VEC{\sigma}_h^k = \VEC{\sigma}_{h,\rm dis}^k + \VEC{\sigma}_{h,\rm lin}^k,
  \end{equation}
  where $\VEC{\sigma}_{h,\rm dis}^k$ represents \emph{discretization} and $\VEC{\sigma}_{h,\rm lin}^k$ represents \emph{linearization}.
\end{assumption}

For an example of reconstruction that satisfies Assumption~\ref{assumption stress reconstruction} we refer to Section~\ref{sec:stress reconstruction}. Now, we introduce the following local estimators that depend on the stress reconstruction and use its decomposition fixed by Assumption~\ref{assumption stress reconstruction}: For any mesh element $T\in\mathcal{T}_h$
\begin{subequations}\label{eq:local estimators distinguishing}
  \begin{align}
    &\eta_{\text{osc},T}^k
    \coloneqq \frac{h_T}{\pi} \left\lVert\VEC{f} + \VEC{\rm div}\, 
    \VEC{\sigma}_h^k \right\rVert_T,
    &\qquad& \text{(oscillation)} \label{eq:oscillation estimator uhk}
    \\
    &\eta_{\text{str},T}^k
    \coloneqq \lVert \VEC{\sigma}_{h,\rm dis}^k-\VEC{\sigma}(\VEC{u}_h^k)\rVert_T,
    &\qquad& \text{(stress)} \label{eq:stress estimator uhk}
    \\ 
    &\begin{gathered}
       \eta_{\text{lin1},T}^k \coloneqq \lVert\VEC{\sigma}_{h,\rm lin}^k\rVert_T,
       \quad
       \eta_{\text{lin2n},T}^k \coloneqq \sum_{F\in\faces{T}{C}} h_F^{\nicefrac{1}{2}} \bigl\lVert \sigma_{h,\rm lin}^{k,n} \bigr\rVert_F,
       \\
       \eta_{\text{lin2t},T}^k \coloneqq \sum_{F\in\faces{T}{C}} h_F^{\nicefrac{1}{2}} \bigl\lVert \VEC{\sigma}_{h,\rm lin}^{k,\VEC{t}} \bigr\rVert_F
       \end{gathered}
    &\qquad& \text{(linearization)}
    \\
    &\eta_{\text{Neu},T}^k
    \coloneqq \sum_{F\in\faces{T}{N}} C_{t,T,F} h_F^{\nicefrac{1}{2}} \left\lVert \gN-\VEC{\sigma}_h^k \VEC{n} \right\rVert_F,
    &\qquad& \text{(Neumann)} \label{eq:Neumann estimator uhk}
    \allowdisplaybreaks\\
    &\eta_{\rm cnt,T}^k
    \coloneqq \sum_{F\in\faces{T}{C}} h_F^{\nicefrac{1}{2}} \left\lVert \left[P_{1,\gamma}^n(\VEC{u}_h^k)\right]_{\mathbb{R}^-} - \sigma^{k,n}_{h,\rm dis}\right\rVert_F.
    &\qquad& \text{(contact)} \label{eq:contact estimator uhk}
    \\
    &\eta_{\text{frc},T}^k
    \coloneqq \sum_{F\in \faces{T}{C}} h_F^{\nicefrac{1}{2}} \left\lVert \left[\VEC{P}_{1,\gamma}^{\VEC{t}}(\VEC{u}_h^k)\right]_{S_h(\VEC{u}_h^k)} - \VEC{\sigma}^{k,\VEC{t}}_{h,\rm dis}\right\rVert_F,
    &\qquad&\text{(friction)} \label{eq:friction estimator uhk}
  \end{align}
\end{subequations}
The corresponding global error estimators are defined by
\begin{equation}\label{eq:global estimators distinguishing}
  \eta_{\bullet}^k \coloneqq \left[
    \sum_{T\in\mathcal{T}_h} \left(\eta_{\bullet,T}^k\right)^2
    \right]^{\nicefrac{1}{2}}.
\end{equation}

\begin{theorem}[A posteriori error estimate distinguishing the error components]\label{th:a posteriori distinguishing}
    Let $\VEC{u}_h^k\in\VEC{V}_h$ be the solution of the linearized problem \eqref{eq:linearized problem} with $P_{\rm lin}^{n,k-1}(\,\cdot\,)$ and $\VEC{P}^{\VEC{t},k-1}_{\rm lin}$ defined by \eqref{eq:P lin n} and \eqref{eq:P lin t}, respectively, and let $\mathcal{R}(\VEC{u}_h^k)$ be the residual of $\VEC{u}_h^k$ defined by \eqref{eq:residual definition}.
    Then, under Assumption~\ref{assumption stress reconstruction}, it holds
    \begin{multline}\label{eq:a posteriori local error estimate distinguishing}
        \dualnormresidual{\VEC{u}_h^k}
        \\
        \leq \Biggl[
          \sum_{T\in\mathcal{T}_h} \Bigl((\eta_{\rm osc,T}^k + \eta_{\rm str,T}^k + \eta_{\rm lin1,T}^k + \eta_{\rm Neu,T}^k)^2
          + (\eta_{\rm cnt,T}^k + \eta_{\rm frc,T}^k + \eta_{\rm lin2n,T}^k + \eta_{\rm lin2t,T}^k)^2 \Bigr)
          \Biggr]^{\nicefrac{1}{2}}
    \end{multline}
    and, as a result,
    \begin{equation}\label{eq:a posteriori global error estimate distinguishing}
        \dualnormresidual{\VEC{u}_h^k}
        \leq \Bigl[
          (\eta_{\rm osc}^k + \eta_{\rm str}^k + + \eta_{\rm lin1}^k + \eta_{\rm Neu}^k)^2 + (\eta_{\rm cnt}^k + \eta_{\rm frc}^k + \eta_{\rm lin2n}^k + \eta_{\rm lin2t}^k)^2
          \Bigr]^{\nicefrac{1}{2}}.
    \end{equation}
\end{theorem}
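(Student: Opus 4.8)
The plan is to follow closely the proof of Theorem~\ref{th:a posteriori}, the only genuinely new ingredient being the splitting of the stress reconstruction provided by Assumption~\ref{assumption stress reconstruction}; in particular, no Galerkin-type orthogonality is needed (the residual \eqref{eq:residual definition} is evaluated at $\VEC{u}_h^k$ with the \emph{nonlinear} projection operators, so $\VEC{u}_h^k$ is not an exact discrete solution, but the basic estimate did not use orthogonality either). First I would fix $\VEC{v}\in\HunoD{\Omega}$ with $\norm{\VEC{v}}=1$, add the integration by parts formula \eqref{eq:Green's formula} written for $\VEC{\sigma}_h^k$ to the expansion of $\langle\mathcal{R}(\VEC{u}_h^k),\VEC{v}\rangle$ obtained from \eqref{eq:residual definition} and \eqref{eq:definition a and L}, and use the symmetry of $\VEC{\sigma}_h^k$ to replace $\VEC{\varepsilon}(\VEC{v})$ by $\VEC{\nabla}\VEC{v}$. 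This produces the same five-term decomposition $\langle\mathcal{R}(\VEC{u}_h^k),\VEC{v}\rangle=\mathfrak{T}_1+\cdots+\mathfrak{T}_5$ as in the proof of Theorem~\ref{th:a posteriori}, with $\VEC{u}_h$ replaced by $\VEC{u}_h^k$ and $\VEC{\sigma}_h$ by $\VEC{\sigma}_h^k$: $\mathfrak{T}_1=(\VEC{f}+\VEC{\rm div}\,\VEC{\sigma}_h^k,\VEC{v})$, $\mathfrak{T}_2=(\VEC{\sigma}_h^k-\VEC{\sigma}(\VEC{u}_h^k),\VEC{\nabla}\VEC{v})$, $\mathfrak{T}_3=(\gN-\VEC{\sigma}_h^k\VEC{n},\VEC{v})_{\gamN}$, $\mathfrak{T}_4=([P_{1,\gamma}^n(\VEC{u}_h^k)]_{\mathbb{R}^-}-\sigma_h^{k,n},v^n)_{\gamC}$, $\mathfrak{T}_5=([\VEC{P}_{1,\gamma}^{\VEC{t}}(\VEC{u}_h^k)]_{S_h(\VEC{u}_h^k)}-\VEC{\sigma}_h^{k,\VEC{t}},\VEC{v}^{\VEC{t}})_{\gamC}$.

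Next I would insert the decomposition $\VEC{\sigma}_h^k=\VEC{\sigma}_{h,\rm dis}^k+\VEC{\sigma}_{h,\rm lin}^k$ of Assumption~\ref{assumption stress reconstruction} into $\mathfrak{T}_2$, $\mathfrak{T}_4$, and $\mathfrak{T}_5$, but \emph{keep $\VEC{\sigma}_h^k$ undivided} in $\mathfrak{T}_1$ and $\mathfrak{T}_3$: this asymmetry is the one point requiring care, because Properties 2 and 3 of Definition~\ref{def:equilibrated stress reconstruction} — which are what make $\eta_{\rm osc}^k$ and $\eta_{\rm Neu}^k$ appear after subtracting element-/face-wise constants and invoking the Poincaré and trace inequalities — hold only for the full reconstruction, not for its two components separately. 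After splitting, $\mathfrak{T}_2=(\VEC{\sigma}_{h,\rm dis}^k-\VEC{\sigma}(\VEC{u}_h^k),\VEC{\nabla}\VEC{v})+(\VEC{\sigma}_{h,\rm lin}^k,\VEC{\nabla}\VEC{v})$ is bounded elementwise by $(\eta_{\rm str,T}^k+\eta_{\rm lin1,T}^k)\lVert\VEC{\nabla}\VEC{v}\rVert_T$ via Cauchy--Schwarz; $\mathfrak{T}_4$ splits into $([P_{1,\gamma}^n(\VEC{u}_h^k)]_{\mathbb{R}^-}-\sigma_{h,\rm dis}^{k,n},v^n)_{\gamC}-(\sigma_{h,\rm lin}^{k,n},v^n)_{\gamC}$, bounded using $|v^n|\le|\VEC{v}|$ and $h_F^{-1/2}\lVert\VEC{v}\rVert_F\le\normT{\VEC{v}}$ by $(\eta_{\rm cnt,T}^k+\eta_{\rm lin2n,T}^k)\normT{\VEC{v}}$ exactly as $\mathfrak{T}_4$ in the basic estimate; and $\mathfrak{T}_5$ analogously by $(\eta_{\rm frc,T}^k+\eta_{\rm lin2t,T}^k)\normT{\VEC{v}}$. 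The terms $\mathfrak{T}_1$ and $\mathfrak{T}_3$ are bounded by $\eta_{\rm osc,T}^k\lVert\VEC{\nabla}\VEC{v}\rVert_T$ and $\eta_{\rm Neu,T}^k\lVert\VEC{\nabla}\VEC{v}\rVert_T$ verbatim as in Theorem~\ref{th:a posteriori} (cf. \cite{DiPietro2022}).

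Then I would collect the estimators multiplying $\lVert\VEC{\nabla}\VEC{v}\rVert_T$ into $\eta_{a,T}^k\coloneqq\eta_{\rm osc,T}^k+\eta_{\rm str,T}^k+\eta_{\rm lin1,T}^k+\eta_{\rm Neu,T}^k$ and those multiplying $\normT{\VEC{v}}$ into $\eta_{b,T}^k\coloneqq\eta_{\rm cnt,T}^k+\eta_{\rm frc,T}^k+\eta_{\rm lin2n,T}^k+\eta_{\rm lin2t,T}^k$, apply the discrete Cauchy--Schwarz inequality over $\mathcal{T}_h$, and use $\sum_{T\in\mathcal{T}_h}(\lVert\VEC{\nabla}\VEC{v}\rVert_T^2+\normT{\VEC{v}}^2)=\lVert\VEC{\nabla}\VEC{v}\rVert^2+\normGamma{\VEC{v}}^2=\norm{\VEC{v}}^2=1$ (every contact face belongs to exactly one element, so $\sum_T\normT{\VEC{v}}^2=\normGamma{\VEC{v}}^2$). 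Taking the supremum over such $\VEC{v}$ gives the local estimate \eqref{eq:a posteriori local error estimate distinguishing}. For the global estimate \eqref{eq:a posteriori global error estimate distinguishing}, I would apply the discrete Minkowski inequality in $\ell^2(\mathcal{T}_h)$ separately to the four nonnegative sequences making up $(\eta_{a,T}^k)_{T\in\mathcal{T}_h}$ and to the four making up $(\eta_{b,T}^k)_{T\in\mathcal{T}_h}$, obtaining $\sum_T(\eta_{a,T}^k)^2\le(\eta_{\rm osc}^k+\eta_{\rm str}^k+\eta_{\rm lin1}^k+\eta_{\rm Neu}^k)^2$ and $\sum_T(\eta_{b,T}^k)^2\le(\eta_{\rm cnt}^k+\eta_{\rm frc}^k+\eta_{\rm lin2n}^k+\eta_{\rm lin2t}^k)^2$, and summing. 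No step is genuinely difficult; the main obstacle is purely organizational, namely keeping straight which estimators see the full $\VEC{\sigma}_h^k$ (oscillation and Neumann, relying on the equilibration properties) and which see only a single component of its decomposition ($\VEC{\sigma}_{h,\rm dis}^k$ for stress/contact/friction, $\VEC{\sigma}_{h,\rm lin}^k$ for the three linearization estimators), so that the linearization error is isolated in the computable quantities $\eta_{\rm lin1}^k$, $\eta_{\rm lin2n}^k$, $\eta_{\rm lin2t}^k$.
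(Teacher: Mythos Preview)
Your proposal is correct and follows essentially the same route as the paper: apply the argument of Theorem~\ref{th:a posteriori} with $\VEC{\sigma}_h^k$ in place of $\VEC{\sigma}_h$, then split $\VEC{\sigma}_h^k=\VEC{\sigma}_{h,\rm dis}^k+\VEC{\sigma}_{h,\rm lin}^k$ via the triangle inequality in the stress, contact, and friction terms (keeping the full reconstruction in the oscillation and Neumann terms), and finally pass to the global estimate by the discrete Minkowski inequality. Your explicit remark on which terms see the undivided $\VEC{\sigma}_h^k$ versus its components is exactly the organizational point the paper's short proof leaves implicit.
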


\begin{proof}
    Proceeding as in the proof of Theorem~\ref{th:a posteriori}, we obtain
    \begin{multline*}
        \dualnormresidual{\VEC{u}_h^k}
        \\
        \leq \Biggl\{
        \sum_{T\in\mathcal{T}_h} \Biggl[
          \bigl(\eta_{\text{osc},T}^k + \bigl\lVert\VEC{\sigma}_h^k - \VEC{\sigma}(\VEC{u}_h^k)\bigr\rVert_T + \eta_{\text{Neu},T}^k \bigr)^2
          + \Biggr(
          \sum_{F\in\faces{T}{C}} h_F^{\nicefrac{1}{2}} \left\lVert \left[P_{1,\gamma}^n(\VEC{u}_h^k)\right]_{\mathbb{R}^-} - \sigma_h^{k,n}\right\rVert_F
          \Biggr)^2
          \Biggr]
        \Biggr\}^{\nicefrac{1}{2}}.
    \end{multline*}
    Then, decomposing $\VEC{\sigma}_h^k$ into its discretization and linearization part according to \eqref{eq:sigmah decomposition}, using the triangle inequality and the definition of the local estimators \eqref{eq:local estimators distinguishing}, we get \eqref{eq:a posteriori local error estimate distinguishing}.
    Finally, \eqref{eq:a posteriori global error estimate distinguishing} is obtained from \eqref{eq:a posteriori local error estimate distinguishing} applying twice the inequality $\sum_{T\in\mathcal{T}_h}\left(\sum_{i=1}^m a_{i,T}\right)^2\le\left(\sum_{i=1}^m a_i\right)^2$ valid for all families of nonnegative real numbers $(a_{i,T})_{1\le i\le m,\, T\in\mathcal{T}_h}$ with $a_i\coloneq\left(\sum_{T\in\mathcal{T}_h}a_{i,T}^2\right)^{\nicefrac12}$ for all $1\le i\le m$.
\end{proof}

We close this section by introducing a fully adaptive algorithm for the refinement of an initial coarse mesh with a stopping criterion that automatically adjusts the number of Newton iterations at each mesh refinement iteration. With this goal, we fix a user-dependent parameter $\gamma_{\rm lin}\in (0,1)$ representing the relative magnitude of the linearization error with respect to the total error and define the linearization estimators
\[
  \text{%
    $\eta_{\rm lin, T}^k \coloneqq \eta_{\rm lin1, T}^k + \sqrt{\bigl(\eta_{\rm lin2n,T}^k\bigr)^2 + \bigl(\eta_{\rm lin2t,T}^k\bigr)^2}$ for all $T \in \mathcal{T}_h$ and
    $\eta_{\rm lin}^k \coloneqq \left[\sum_{T\in\mathcal{T}_h} \left(\eta_{\rm lin,T}^k\right)^2\right]^{\nicefrac{1}{2}}$,
    }
\]
and, for $T \in \mathcal{T}_h$, the total estimator
\begin{equation}\label{eq:local total estimator}
    \eta_{\rm tot,T}^k \coloneqq \left[ (\eta_{\rm osc,T}^k + \eta_{\rm str,T}^k + \eta_{\rm lin1,T}^k + \eta_{\rm Neu,T}^k)^2 + (\eta_{\rm cnt,T}^k + \eta_{\rm frc,T}^k + \eta_{\rm lin2n,T}^k + \eta_{\rm lin2t,T}^k)^2 \right]^{\nicefrac{1}{2}}.
\end{equation}

\begin{algorithm}[H]
    \caption{Adaptive algorithm}\label{algorithm}
    \begin{algorithmic}[1]
        \State {\bf choose} an initial displacement $\VEC{u}_h^0 \in\VEC{V}_h$ and fix $\gamma_{\rm lin} \in (0,1)$
        \Repeat \ \{mesh refinement\}
            \State {\bf set} $k = 0$
            \Repeat \ \{Newton algorithm\}
                \State {\bf set} $k = k+1$
                \State  \textbf{setup} the operators $P_{\text{lin},\delta}^{n, k-1}$ and $P_{\text{lin},\delta}^{\VEC{t}, k-1}$ and the linear system \eqref{eq:linearized problem}
                \State \textbf{compute} $\VEC{u}_h^{k}$, $\VEC{\sigma}_h^{k}$, and the estimators \eqref{eq:local estimators distinguishing}--\eqref{eq:global estimators distinguishing}
            \Until {$\eta_{\rm lin}^{k} \leq \gamma_{\rm lin} \left(\eta_{\rm osc}^{k} + \eta_{\rm str}^{k} + \eta_{\rm Neu}^{k} + \eta_{\rm cnt}^{k} + \eta_{\rm frc}^k\right)$} \label{alg:global stopping criterion}
            \State \textbf{refine} the elements of the mesh where $\eta_{\rm tot, T}^k$ is higher
        \Until {$\eta_{\rm tot, T}^k$ is distributed evenly over the mesh}
    \end{algorithmic}
\end{algorithm}

\begin{remark}[Local stopping criterion]
  In the proposed algorithm, the stopping criterion for the number of Newton iterations is enforced in a global sense by comparing the size of the global linearization estimator with the sum of the other global estimators. It is also possible to introduce instead a local stopping criterion that has to be verified on all elements of the mesh:
    \begin{equation}\label{eq:local stopping criterion}
        \eta_{\rm lin, T}^{k} \leq \gamma_{\rm lin, T} (\eta_{\rm osc, T}^{k} + \eta_{\rm str, T}^{k} + \eta_{\rm Neu, T}^{k} + \eta_{\rm cnt, T}^{k} + \eta_{\rm frc, T}^k) \qquad \forall T \in\mathcal{T}_h,
    \end{equation}
    with $\gamma_{\rm lin,T}\in (0,1)$ for all $T\in\mathcal{T}_h$.
    This criterion will be used in Section \ref{sec:efficiency} to prove the local efficiency of the estimators \eqref{eq:local estimators distinguishing}.
\end{remark}

\subsection{Comparison with the energy norm}

This subsection is devoted to comparing the dual norm of the residual $\dualnormresidual{\VEC{u}_h}$ with the energy norm of the error $\energynorm{\VEC{u}-\VEC{u}_h}$ defined in a standard way as
\begin{equation}\label{eq:energy norm}
    \energynorm{\VEC{v}}^2 \coloneqq a(\VEC{v},\VEC{v}) = \left(\VEC{\sigma}(\VEC{v}), \VEC{\varepsilon}(\VEC{v})\right) \qquad \forall \VEC{v}\in \HunoD{\Omega}.
\end{equation}
In the following theorems, the notation $a\lesssim b$, $a,b\in\MAT{R}$ will stand for $a\leq C b$ where $C > 0$ is a constant independent of the mesh size $h$ and of the Nitsche parameter $\gamma_0$.

\begin{theorem}[Control of the energy norm]\label{th:control energy norm}
    Assume that the solution $\VEC{u}$ of the continuous problem \eqref{eq:unilateral problem} belongs to $\VEC{H}^{\frac{3}{2}+\nu}(\Omega)$ for some $\nu >0$, and let $\VEC{u}_h\in\VEC{V}_h$ be the solution of the discrete problem \eqref{eq:Nitsche-based_method}. 
    Then,
    \begin{equation}\label{eq:control energy norm}
        \begin{split}
            \alpha^{\nicefrac{1}{2}} \energynorm{\VEC{u}-\VEC{u}_h}
            \lesssim&
            \dualnormresidual{\VEC{u}_h}
            + \Biggl(\sum_{F\in \faces{h}{C}} \frac{1}{h_F} \left\lVert \sigma^n(\VEC{u})-\left[P_{1,\gamma}^n(\VEC{u}_h)\right]_{\mathbb{R}^-} \right\rVert_{F}^2 \Biggr)^{\nicefrac{1}{2}}\\
            & + \Biggl(\sum_{F\in \faces{h}{C}} \frac{1}{h_F} \left\lVert \VEC{\sigma}^{\VEC{t}}(\VEC{u})-\left[\VEC{P}_{1,\gamma}^{\VEC{t}}(\VEC{u}_h)\right]_{S_h(\VEC{u}_h)} \right\rVert_{F}^2\Biggr)^{\nicefrac{1}{2}},
        \end{split}
    \end{equation}
    where $\alpha$ is the coercitivity constant of the bilinear form $a$ (cf. \eqref{eq:definition a and L}) such that $\alpha \normHuno{\VEC{v}}^2 \leq \energynorm{\VEC{v}}^2$ for any $\VEC{v}\in\HunoD{\Omega}$.
\end{theorem}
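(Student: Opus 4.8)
The plan is to pass from the energy norm to the dual norm of the residual through an exact energy identity, then to control the leftover boundary contributions. Since $\VEC{u}\in\VEC{H}^{\frac32+\nu}(\Omega)$, the stress $\VEC{\sigma}(\VEC{u})$ lies in $\MAT{H}(\mathbf{div},\Omega)$ with normal trace in $\VEC{L}^2(\gamC)$, hence $\VEC{u}\in\VEC{W}$, the identities \eqref{eq:normal contact P} hold a.e.\ on $\gamC$, and integrating the strong form \eqref{eq:unilateral equilibrium}, \eqref{eq:unilateral Neumann} by parts against any $\VEC{v}\in\HunoD{\Omega}$ is licit. Comparing the resulting expression for $a(\VEC{u},\VEC{v})$ with the definition \eqref{eq:residual definition} of $\mathcal{R}(\VEC{u}_h)$ yields, for all $\VEC{v}\in\HunoD{\Omega}$,
\begin{equation*}
  a(\VEC{u}-\VEC{u}_h,\VEC{v}) = \langle\mathcal{R}(\VEC{u}_h),\VEC{v}\rangle
  + \bigl(\sigma^n(\VEC{u})-[P_{1,\gamma}^n(\VEC{u}_h)]_{\mathbb{R}^-},v^n\bigr)_{\gamC}
  + \bigl(\VEC{\sigma}^{\VEC{t}}(\VEC{u})-[\VEC{P}_{1,\gamma}^{\VEC{t}}(\VEC{u}_h)]_{S_h(\VEC{u}_h)},\VEC{v}^{\VEC{t}}\bigr)_{\gamC},
\end{equation*}
and choosing $\VEC{v}=\VEC{u}-\VEC{u}_h$ turns the left-hand side into $\energynorm{\VEC{u}-\VEC{u}_h}^2$.

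I would then estimate the three terms on the right. The residual term is bounded by $\dualnormresidual{\VEC{u}_h}\,\norm{\VEC{u}-\VEC{u}_h}$. For the two contact terms, localizing to $F\in\faces{h}{C}$ and splitting $1=h_F^{-1/2}\,h_F^{1/2}$, a face-by-face Cauchy--Schwarz produces precisely the two sums on the right of \eqref{eq:control energy norm}, each multiplied by $\bigl(\sum_{F\in\faces{h}{C}}h_F\lVert\VEC{u}-\VEC{u}_h\rVert_F^2\bigr)^{1/2}$; this last factor is controlled via a scaled trace inequality, $h_F\le\operatorname{diam}(\Omega)$, the Poincar\'e--Friedrichs inequality on $\HunoD{\Omega}$ (available since $\lvert\gamD\rvert>0$), and the coercivity of $a$, so it is $\lesssim\alpha^{-1/2}\energynorm{\VEC{u}-\VEC{u}_h}$.

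It remains to bound $\norm{\VEC{u}-\VEC{u}_h}^2=\lVert\VEC{\nabla}(\VEC{u}-\VEC{u}_h)\rVert^2+\normGamma{\VEC{u}-\VEC{u}_h}^2$, where the first summand is $\le\alpha^{-1}\energynorm{\VEC{u}-\VEC{u}_h}^2$ by coercivity and Poincar\'e--Friedrichs. The contact seminorm $\normGamma{\VEC{u}-\VEC{u}_h}$ is the main obstacle, since it is genuinely stronger than the $\VEC{H}^1$-norm (not dominated by it uniformly in $h$) and hence cannot be absorbed into $\energynorm{\VEC{u}-\VEC{u}_h}$ by abstract arguments; here the contact conditions must be exploited. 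Using $\gamma\vert_F=\gamma_0/h_F$ one rewrites $\normGamma{\VEC{u}-\VEC{u}_h}^2=\gamma_0^{-1}\bigl(\lVert\gamma^{1/2}(u^n-u_h^n)\rVert_{\gamC}^2+\lVert\gamma^{1/2}(\VEC{u}^{\VEC{t}}-\VEC{u}_h^{\VEC{t}})\rVert_{\gamC}^2\bigr)$. Then one combines the identity $-\gamma u^n=P_{1,\gamma}^n(\VEC{u})-[P_{1,\gamma}^n(\VEC{u})]_{\mathbb{R}^-}$ (and its tangential analogue) coming from \eqref{eq:normal contact P}, the splitting $\gamma(u^n-u_h^n)=(\sigma^n(\VEC{u})-\sigma^n(\VEC{u}_h))-(P_{1,\gamma}^n(\VEC{u})-P_{1,\gamma}^n(\VEC{u}_h))$, the $1$-Lipschitz continuity and the variational (firm-nonexpansiveness) characterization of the projections $[\,\cdot\,]_{\mathbb{R}^-}$ and $[\,\cdot\,]_{\alpha}$, the sign relations implied by \eqref{eq:unilateral contact 1}--\eqref{eq:unilateral contact 2}, and the discrete equation \eqref{eq:Nitsche-based_method} --- which, after element-wise integration by parts, makes $[P_{1,\gamma}^n(\VEC{u}_h)]_{\mathbb{R}^-}-\sigma^n(\VEC{u}_h)$ and its tangential counterpart appear inside $\mathcal{R}(\VEC{u}_h)$ and thus controllable by $\dualnormresidual{\VEC{u}_h}$ --- to reach $\normGamma{\VEC{u}-\VEC{u}_h}\lesssim\alpha^{-1/2}\energynorm{\VEC{u}-\VEC{u}_h}+\dualnormresidual{\VEC{u}_h}+(\text{the two data sums})$, the Nitsche scaling $\gamma=\gamma_0/h$ being essential throughout. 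This is where almost all the technical work lies, and the step I expect to be hardest to carry out rigorously, particularly in the Coulomb case where the radius $S_h(\VEC{u}_h)$ itself depends on $[P_{1,\gamma}^n(\VEC{u}_h)]_{\mathbb{R}^-}$.

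Inserting these bounds into the energy identity gives an inequality of the form $\energynorm{\VEC{u}-\VEC{u}_h}^2\lesssim\alpha^{-1/2}\energynorm{\VEC{u}-\VEC{u}_h}\bigl(\dualnormresidual{\VEC{u}_h}+\text{data sums}\bigr)+\dualnormresidual{\VEC{u}_h}^2+(\text{data sums})^2$; a Young inequality to absorb the $\energynorm{\VEC{u}-\VEC{u}_h}$ on the right, followed by multiplication by $\alpha^{1/2}$, yields \eqref{eq:control energy norm}, and inspecting the constants shows they depend neither on $h$ nor on $\gamma_0$. The routine ingredients are the energy identity and the trace/Poincar\'e bookkeeping; the crux is the projection-based estimate of $\normGamma{\VEC{u}-\VEC{u}_h}$.
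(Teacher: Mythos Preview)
Your derivation of the energy identity and your treatment of the two contact boundary terms (Cauchy--Schwarz with the split $h_F^{-1/2}\cdot h_F^{1/2}$, then scaled trace and coercivity to reach $\alpha^{-1/2}\energynorm{\VEC{u}-\VEC{u}_h}$) coincide exactly with the paper's. The difference lies entirely in the handling of the residual term $\mathfrak{T}_1=\langle\mathcal{R}(\VEC{u}_h),\VEC{u}-\VEC{u}_h\rangle$.

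The paper's argument for $\mathfrak{T}_1$ is one line: it invokes ``the definition of the dual norm of the residual along with the coercivity of $a$'' to write
\[
\mathfrak{T}_1 \;\lesssim\; \alpha^{-1/2}\,\dualnormresidual{\VEC{u}_h}\,\energynorm{\VEC{u}-\VEC{u}_h},
\]
and then simply divides the identity $\energynorm{\VEC{u}-\VEC{u}_h}^2=\mathfrak{T}_1+\mathfrak{T}_2+\mathfrak{T}_3$ by $\energynorm{\VEC{u}-\VEC{u}_h}$. No separate control of $\normGamma{\VEC{u}-\VEC{u}_h}$ is carried out; the projection identities, Lipschitz/firm-nonexpansiveness properties of $[\cdot]_{\mathbb{R}^-}$ and $[\cdot]_\alpha$, re-injection of the discrete equation, and the Young-inequality closure that you describe do not appear anywhere in the paper's proof.

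Your observation that $\normGamma{\cdot}$ is genuinely stronger than $\normHuno{\cdot}$ (hence not dominated by $\energynorm{\cdot}$ with an $h$-independent constant for an arbitrary $\VEC{H}^1$-function) is correct, so the paper's step $\norm{\VEC{u}-\VEC{u}_h}\lesssim\alpha^{-1/2}\energynorm{\VEC{u}-\VEC{u}_h}$ is, at the level written, not justified; it is inherited from \cite[Theorem~7]{DiPietro2022} without elaboration. Your sketch is therefore more careful than the paper at precisely this point, at the cost of a considerably heavier argument whose details (especially the Coulomb case you single out) remain to be filled in. If one accepts the paper's step at face value, the rest of your machinery is unnecessary and the proof closes by a simple division rather than by Young's inequality.
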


\begin{proof}
  Proceeding as in \cite[Theorem 7]{DiPietro2022}, it is possible to show that
  \begin{equation}\label{eq:energy upper partial}
    \begin{aligned}
      \energynorm{\VEC{u}-\VEC{u}_h}^2
      &= \langle\mathcal{R}(\VEC{u}_h), \VEC{u}-\VEC{v}_h\rangle + \left( \sigma^n(\VEC{u})-\left[P_{1,\gamma}^n(\VEC{u}_h)\right]_{\mathbb{R}^-}, u^n-u^n_h \right)_{\gamC} \\
      &\quad
      + \left( \VEC{\sigma}^{\VEC{t}}(\VEC{u})-\left[\VEC{P}_{1,\gamma}^{\VEC{t}}(\VEC{u}_h)\right]_{S_h(\VEC{u}_h)}, \VEC{u}^{\VEC{t}}-\VEC{u}^{\VEC{t}}_h \right)_{\gamC}
      \eqqcolon \mathfrak{T}_1 + \mathfrak{T}_2 + \mathfrak{T}_3.
    \end{aligned}
  \end{equation}
  For the first two terms we use, respectively, the definition \eqref{eq:dual norm definition} of the dual norm of the residual along with the coercivity of $a$ and the Cauchy--Schwarz inequality followed by the definition \eqref{eq:energy norm} of $\energynorm{\cdot}$ to obtain
  \begin{gather}\label{eq:first term}
    \mathfrak{T}_1 \lesssim \alpha^{-\nicefrac{1}{2}}
    \dualnormresidual{\VEC{u}_h}
    \energynorm{\VEC{u}-\VEC{u}_h} 
    \\ \label{eq:second term}
    \mathfrak{T}_2 \lesssim \alpha^{-\nicefrac{1}{2}}
    \Biggl(
    \sum_{F\in\faces{h}{C}} \frac{1}{h_F} \left\lVert \sigma^n(\VEC{u}) - \left[P_{1,\gamma}^n(\VEC{u}_h)\right]_{\mathbb{R}^-} \right\rVert_{F}^2
    \Biggr)^{\nicefrac{1}{2}} \energynorm{\VEC{u}-\VEC{u}_h}.
  \end{gather}
  For the remaining term, we apply Cauchy--Schwarz and trace inequalities to write:
  \begin{equation}\label{eq:third term}
    \begin{aligned}
      \mathfrak{T}_3
      &\lesssim\Biggl(
      \sum_{F\in \faces{h}{C}} \frac{1}{h_F} \left\lVert \VEC{\sigma}^{\VEC{t}}(\VEC{u}) - \left[\VEC{P}_{1,\gamma}^{\VEC{t}}(\VEC{u}_h)\right]_{S_h(\VEC{u}_h)} \right\rVert_{F}^2
      \Biggr)^{\nicefrac12}\normHuno{\VEC{u}-\VEC{u}_h}  \\
      &\lesssim \alpha^{-\nicefrac{1}{2}}
      \Biggl(
      \sum_{F\in\faces{h}{C}} \frac{1}{h_F} \left\lVert \VEC{\sigma}^{\VEC{t}}(\VEC{u}) - \left[\VEC{P}_{1,\gamma}^{\VEC{t}}(\VEC{u}_h)\right]_{S_h(\VEC{u}_h)} \right\rVert_{F}^2
      \Biggr)^{\nicefrac{1}{2}} \energynorm{\VEC{u}-\VEC{u}_h}.
    \end{aligned}
  \end{equation}
  We conclude by inserting the estimates \eqref{eq:first term}, \eqref{eq:second term}, and \eqref{eq:third term} into \eqref{eq:energy upper partial}.
\end{proof}

\begin{theorem}[Control of the dual norm of the residual]\label{th:control dual norm}
    Assume that the solution $\VEC{u}$ of the continuous problem \eqref{eq:unilateral problem} belongs to $\VEC{H}^{\frac{3}{2}+\nu}(\Omega)$ for some $\nu>0$, and let $\VEC{u}_h\in\VEC{V}_h$ be the solution of the discrete problem \eqref{eq:Nitsche-based_method}. 
    Then, it holds
    \begin{equation}\label{eq:control dual norm}
        \begin{aligned}
          \dualnormresidual{\VEC{u}_h}
          &\leq (d \lambda + 4\mu)^{\nicefrac{1}{2}} \energynorm{\VEC{u}-\VEC{u}_h} + \Biggl(\sum_{F\in\faces{h}{C}} h_F \left\lVert \sigma^n(\VEC{u})-\left[P_{1,\gamma}^n(\VEC{u}_h)\right]_{\mathbb{R}^-}\right\rVert_F^2\Biggr)^{\nicefrac{1}{2}}
          \\
          &\quad          
          + \Biggl(\sum_{F\in\faces{h}{C}} h_F \left\lVert \VEC{\sigma}^{\VEC{t}}(\VEC{u})-\left[\VEC{P}_{1,\gamma}^{\VEC{t}}(\VEC{u}_h)\right]_{S_h(\VEC{u}_h)}\right\rVert_F^2\Biggr)^{\nicefrac{1}{2}}.
        \end{aligned}
    \end{equation}
\end{theorem}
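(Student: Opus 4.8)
The plan is to reduce $\dualnormresidual{\VEC{u}_h}$ to the three contributions appearing on the right-hand side of \eqref{eq:control dual norm}, by first rewriting the residual using the fact that $\VEC{u}$ solves \eqref{eq:unilateral problem}. Fix $\VEC{v}\in\HunoD{\Omega}$ with $\norm{\VEC{v}}=1$. Since $\VEC{u}\in\VEC{H}^{\frac{3}{2}+\nu}(\Omega)$, the stress $\VEC{\sigma}(\VEC{u})$ has an $\VEC{L}^2$ normal trace on $\partial\Omega$, so that, integrating \eqref{eq:unilateral equilibrium} against $\VEC{v}$ and integrating by parts, then invoking the Neumann condition on $\gamN$, the orthogonal splittings of $\VEC{\sigma}(\VEC{u})\VEC{n}$ and of $\VEC{v}$ on $\gamC$, and $\VEC{v}=\VEC{0}$ on $\gamD$, one obtains the identity $L(\VEC{v}) = a(\VEC{u},\VEC{v}) - (\sigma^n(\VEC{u}),v^n)_{\gamC} - (\VEC{\sigma}^{\VEC{t}}(\VEC{u}),\VEC{v}^{\VEC{t}})_{\gamC}$. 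Substituting this into \eqref{eq:residual definition} (evaluated at $\VEC{u}_h$, with $\VEC{v}$ as test function) makes $L(\VEC{v})$ disappear and leaves
\[
  \langle\mathcal{R}(\VEC{u}_h),\VEC{v}\rangle = a(\VEC{u}-\VEC{u}_h,\VEC{v}) + \bigl([P_{1,\gamma}^n(\VEC{u}_h)]_{\mathbb{R}^-}-\sigma^n(\VEC{u}),\,v^n\bigr)_{\gamC} + \bigl([\VEC{P}_{1,\gamma}^{\VEC{t}}(\VEC{u}_h)]_{S_h(\VEC{u}_h)}-\VEC{\sigma}^{\VEC{t}}(\VEC{u}),\,\VEC{v}^{\VEC{t}}\bigr)_{\gamC}.
\]

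Next I would bound the three summands separately. For the elasticity term, the symmetry of $\VEC{\sigma}$ lets one replace $\VEC{\varepsilon}(\VEC{v})$ by $\VEC{\nabla}\VEC{v}$, so that Cauchy--Schwarz gives $a(\VEC{u}-\VEC{u}_h,\VEC{v}) \le \lVert\VEC{\sigma}(\VEC{u}-\VEC{u}_h)\rVert\,\lVert\VEC{\nabla}\VEC{v}\rVert$. The key ingredient is then the pointwise inequality $\lvert\VEC{\sigma}(\VEC{w})\rvert^2 \le (d\lambda+4\mu)\,\VEC{\sigma}(\VEC{w}):\VEC{\varepsilon}(\VEC{w})$ valid for all $\VEC{w}$: writing $t\coloneqq\tr\VEC{\varepsilon}(\VEC{w})$, one has $\lvert\VEC{\sigma}(\VEC{w})\rvert^2 = d\lambda^2 t^2 + 4\lambda\mu t^2 + 4\mu^2\lvert\VEC{\varepsilon}(\VEC{w})\rvert^2$ and $(d\lambda+4\mu)\,\VEC{\sigma}(\VEC{w}):\VEC{\varepsilon}(\VEC{w}) = (d\lambda+4\mu)(\lambda t^2 + 2\mu\lvert\VEC{\varepsilon}(\VEC{w})\rvert^2)$, whose difference is the nonnegative quantity $(2d\lambda\mu+4\mu^2)\lvert\VEC{\varepsilon}(\VEC{w})\rvert^2$. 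Integrating over $\Omega$ yields $\lVert\VEC{\sigma}(\VEC{u}-\VEC{u}_h)\rVert \le (d\lambda+4\mu)^{\nicefrac{1}{2}}\energynorm{\VEC{u}-\VEC{u}_h}$, and since $\lVert\VEC{\nabla}\VEC{v}\rVert\le\norm{\VEC{v}}=1$ the first summand is controlled by $(d\lambda+4\mu)^{\nicefrac{1}{2}}\energynorm{\VEC{u}-\VEC{u}_h}$.

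For the normal-contact and friction summands, I would split each integral over the faces of $\faces{h}{C}$, apply Cauchy--Schwarz face by face, and distribute the scaling as $h_F^{\nicefrac{1}{2}}$ on the stress factor and $h_F^{-\nicefrac{1}{2}}$ on $\VEC{v}$; bounding $\lVert v^n\rVert_F\le\lVert\VEC{v}\rVert_F$ (resp.\ $\lVert\VEC{v}^{\VEC{t}}\rVert_F\le\lVert\VEC{v}\rVert_F$) and using a discrete Cauchy--Schwarz over $F$ together with $\bigl(\sum_{F\in\faces{h}{C}}h_F^{-1}\lVert\VEC{v}\rVert_F^2\bigr)^{\nicefrac{1}{2}}=\normGamma{\VEC{v}}\le\norm{\VEC{v}}=1$, these two summands are bounded respectively by $\bigl(\sum_{F\in\faces{h}{C}}h_F\lVert\sigma^n(\VEC{u})-[P_{1,\gamma}^n(\VEC{u}_h)]_{\mathbb{R}^-}\rVert_F^2\bigr)^{\nicefrac{1}{2}}$ and $\bigl(\sum_{F\in\faces{h}{C}}h_F\lVert\VEC{\sigma}^{\VEC{t}}(\VEC{u})-[\VEC{P}_{1,\gamma}^{\VEC{t}}(\VEC{u}_h)]_{S_h(\VEC{u}_h)}\rVert_F^2\bigr)^{\nicefrac{1}{2}}$. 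Summing the three estimates and taking the supremum over $\{\VEC{v}\in\HunoD{\Omega}:\norm{\VEC{v}}=1\}$ --- using once more that $\lVert\VEC{\nabla}\VEC{v}\rVert\le 1$ and $\normGamma{\VEC{v}}\le 1$ --- yields \eqref{eq:control dual norm}. I expect the only delicate step to be tracking the sharp constant in the elasticity term: one must keep the cross term $4\lambda\mu t^2$ arising from $\lvert\VEC{\sigma}(\VEC{w})\rvert^2$ so that it cancels against the matching term in $(d\lambda+4\mu)\,\VEC{\sigma}(\VEC{w}):\VEC{\varepsilon}(\VEC{w})$, leaving a manifestly nonnegative remainder; the rest is routine Cauchy--Schwarz bookkeeping, in the spirit of the frictionless analysis in \cite{DiPietro2022}.
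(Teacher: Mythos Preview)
Your proof is correct and follows essentially the same approach as the paper: rewrite the residual via integration by parts on the exact solution, then bound the volumetric term using the symmetry of $\VEC{\sigma}$ together with $\lVert\VEC{\sigma}(\VEC{w})\rVert\le(d\lambda+4\mu)^{\nicefrac{1}{2}}\energynorm{\VEC{w}}$, and the two boundary terms by face-wise Cauchy--Schwarz with the $h_F^{\pm\nicefrac{1}{2}}$ weighting and the definition of $\norm{\cdot}$. Your explicit verification of the pointwise inequality $|\VEC{\sigma}(\VEC{w})|^2\le(d\lambda+4\mu)\,\VEC{\sigma}(\VEC{w}):\VEC{\varepsilon}(\VEC{w})$ is a detail the paper leaves implicit (it merely ``recalls the definition of $\VEC{\sigma}$'' and refers to \cite[Theorem~10]{DiPietro2022}), but otherwise the arguments coincide.
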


\begin{proof}
  This result can be proved by using the definition of the residual \eqref{eq:residual definition}, an integration by parts, the symmetry of the stress tensor, and Cauchy-Schwarz inequalities.
  Here, we report only the main steps and refer to \cite[Theorem 10]{DiPietro2022} for the remaining details.
  Expanding $a$ and $L$ according to \eqref{eq:definition a and L} into the definition \eqref{eq:residual definition} of the residual written for $\VEC{w}_h = \VEC{u}_h$, we get
    \begin{equation*}
        \begin{split}
            \langle\mathcal{R}(\VEC{u}_h),\VEC{v}\rangle_{} =&\ (\VEC{\sigma}(\VEC{u}-\VEC{u}_h),\VEC{\varepsilon}(\VEC{v})) - \left(\sigma^n(\VEC{u})-\left[P_{1,\gamma}^n(\VEC{u}_h)\right]_{\mathbb{R}^-},v^n\right)_{\gamC} \\
            &- \left(\VEC{\sigma}^{\VEC{t}}(\VEC{u})-\left[\VEC{P}_{1,\gamma}^{\VEC{t}}(\VEC{u}_h)\right]_{S_h(\VEC{u}_h)}, \VEC{v}^{\VEC{t}}\right)_{\gamC}\\
            \leq&\ \left\lVert\VEC{\sigma}(\VEC{u}-\VEC{u}_h)\right\rVert \left\lVert\VEC{\nabla}\VEC{v}\right\rVert + \sum_{F\in\faces{h}{C}} h_F^{\nicefrac{1}{2}} \left\lVert \sigma^n(\VEC{u})-\left[P_{1,\gamma}^n(\VEC{u}_h)\right]_{\mathbb{R}^-}\right\rVert_F \frac{1}{h_F^{\nicefrac{1}{2}}}\left\lVert \VEC{v}\right\rVert_F  \\
            &+ \sum_{F\in\faces{h}{C}} h_F^{\nicefrac{1}{2}} \left\lVert \VEC{\sigma}^{\VEC{t}}(\VEC{u})-\left[P_{1,\gamma}^n(\VEC{u}_h)\right]_{\mathbb{R}^-}\right\rVert_F \frac{1}{h_F^{\nicefrac{1}{2}}}\left\lVert \VEC{v}\right\rVert_F \\
            \leq&\ \Biggl[
              (d \lambda + 4\mu)^{\nicefrac{1}{2}} \energynorm{\VEC{u}-\VEC{u}_h}
              + \Bigg(\sum_{F\in\faces{h}{C}} h_F\left\lVert \sigma^n(\VEC{u})-\left[P_{1,\gamma}^n(\VEC{u}_h)\right]_{\mathbb{R}^-}\right\rVert_F^2
              \Bigg)^{\nicefrac{1}{2}} \\
              &+ \Bigg(
              \sum_{F\in\faces{h}{C}} h_F\left\lVert \VEC{\sigma}^{\VEC{t}}(\VEC{u})-\left[\VEC{P}_{1,\gamma}^{\VEC{t}}(\VEC{u}_h) \right]_{S_h(\VEC{u}_h)}\right\rVert_F^2
              \bigg)^{\nicefrac{1}{2}} \Biggr] \norm{\VEC{v}},
        \end{split}
    \end{equation*}
    Here, we have invoked the symmetry of $\VEC{\sigma}(\VEC{u} - \VEC{u}_h)$ to replace $\VEC{\varepsilon}(\VEC{v})$ with $\VEC{\nabla}\VEC{u}$ in the first term and then used Cauchy--Schwarz inequalities in the second step, and recalled the definitions \eqref{eq:sigma} of $\VEC{\sigma}$ and \eqref{eq:triple norm} of $\norm{\cdot}$ and used a Cauchy--Schwarz inequality on the sum to conclude.
    We obtain \eqref{eq:control dual norm} applying the definition of dual norm \eqref{eq:dual norm definition}.
\end{proof}

\begin{remark}[Terms depending on $\VEC{u}$]
    The comparison results \eqref{eq:control energy norm} and \eqref{eq:control dual norm} contain two terms that depend on the exact solution $\VEC{u}$. It should be possible to obtain similar bounds not containing these terms proceeding like in some recent work \cite{Capatina2021, Gustafsson2020}.
    In this paper, the results of Theorems~\ref{th:control energy norm} and \ref{th:control dual norm} will be used in Section~\ref{sec:numerical results} for defining the two quantities \eqref{eq:lower bound} and \eqref{eq:upper bound} used as lower and upper bounds of the total estimator, respectively.
\end{remark}

\section{Equilibrated stress reconstruction}\label{sec:stress reconstruction}

This section is devoted to describing the procedure to construct an equilibrated stress reconstruction satisfying the decomposition Assumption~\ref{assumption stress reconstruction}. In particular, $\sigma_h^k$ is obtained working on patches of elements around the mesh vertices using the Arnold--Falk--Winther mixed finite element spaces \cite{Arnold2007}. We adapt the approach of \cite{DiPietro2022} to the frictional contact problem modifying the definition of one of the spaces involved in the stress reconstruction.

\begin{figure}[tb]
    \centering
    \includegraphics{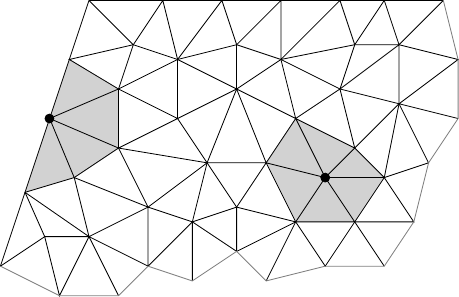}
    \caption{Illustration of a patch $\omega_{\VEC{a}}$ around an inner node $\VEC{a}\in\vertices{h}{i}$ and around a boundary node $\VEC{a}\in\vertices{h}{b}$.}
    \label{fig:patch}
\end{figure}

First, we define, at the local level for any element $T\in\mathcal{T}_h$, the spaces
\begin{equation*}
    \VEC{\Sigma}_T \coloneqq \MAT{P}^{p}(T),\qquad
    \VEC{U}_T \coloneqq \VEC{\mathcal{P}}^{p-1}(T),\qquad
    \VEC{\Lambda}_T \coloneqq \left\{\VEC{\mu}\in\MAT{P}^{p-1}(T) \ :\ \VEC{\mu}=-\VEC{\mu}^T \right\}.
\end{equation*}
The corresponding global spaces are
\begin{equation*}
    \begin{aligned}
        \VEC{\Sigma}_h &\coloneqq \left\{\VEC{\tau}_h \in \MAT{H}(\textbf{div},\Omega) \ :\ \VEC{\tau}_h|_{T} \in \VEC{\Sigma}_T \ \text{for any}\ T\in\mathcal{T}_h\right\},\\
        \VEC{U}_h &\coloneqq \left\{\VEC{v}_h \in \VEC{L}^2(\Omega) \ :\ \VEC{v}_h|_T\in\VEC{U}_T \ \text{for any}\ T\in\mathcal{T}_h \right\},\\
        \VEC{\Lambda}_h &\coloneqq \left\{\VEC{\mu}_h \in \MAT{L}^2(\Omega) \ :\ \VEC{\mu}_h|_T \in \VEC{\Lambda}_T \ \text{for any}\ T\in\mathcal{T}_h \right\}.
    \end{aligned}
\end{equation*}
Then, for any vertex $\VEC{a}\in\mathcal{V}_h$ of the mesh, we consider the patch $\omega_{\VEC{a}}$, see Figure~\ref{fig:patch}, and we denote by $\VEC{n}_{\omega_{\VEC{a}}}$ the outward normal unit vector on its boundary, with $\psi_{\VEC{a}}$ the hat function associated with $\VEC{a}$ and $\VEC{\Sigma}_h(\omega_{\VEC{a}})$, $\VEC{U}_h(\omega_{\VEC{a}})$, and $\VEC{\Lambda}_h(\omega_{\VEC{a}})$ the restrictions of the spaces $\VEC{\Sigma}_h$, $\VEC{U}_h$ and $\VEC{\Lambda}_h$ to the patch $\omega_{\VEC{a}}$.
At the patch level, we set
\begin{align}
  \VEC{\Sigma}_h^{\VEC{a}} &\coloneqq  
  \begin{cases}
    \big\{
    \VEC{\tau}_h \in\VEC{\Sigma}_h(\omega_{\VEC{a}}) \ :\  
    \text{$\VEC{\tau}_h \VEC{n}_{\omega_{\VEC{a}}} = \VEC{0}$ on $\partial\omega_{\VEC{a}}\setminus\gamD$}
    \big\}
    &\text{if $\VEC{a}\in\vertices{h}{b}$},
    \\
    \big\{
    \VEC{\tau}_h \in\VEC{\Sigma}_h(\omega_{\VEC{a}})\ :\
    \text{%
      $\VEC{\tau}_h \VEC{n}_{\omega_{\VEC{a}}} = \VEC{0}$ on $\partial\omega_{\VEC{a}}$
    }
    \big\}
    & \text{otherwise},
  \end{cases}\nonumber
  \\
  \VEC{\Sigma}_{h,{\rm N},{\rm C},\bullet}^{\VEC{a}} &\coloneqq \begin{cases}
    \begin{aligned}
      &\big\{\VEC{\tau}_h\in\VEC{\Sigma}_h(\omega_{\VEC{a}}) \ :\ \VEC{\tau}_h \VEC{n}_{\omega_{\VEC{a}}} = \VEC{0} \ \text{on}\ \partial\omega_{\VEC{a}}\setminus\partial\Omega,\\
      &\quad \VEC{\tau}_h \VEC{n}_{\omega_{\VEC{a}}} = \VEC{g}_{\bullet} \ \text{on}\ \partial\omega_{\VEC{a}}\cap\gamN, \ \text{and}\\
      &\qquad \VEC{\tau}_h \VEC{n}_{\omega_{\VEC{a}}} = \Pi_{\VEC{\Sigma}_h \VEC{n}_{\omega_{\VEC{a}}}} \bigl(\psi_{\VEC{a}} \VEC{P}_{\bullet}(\VEC{u}_h^k)\bigr) \ \text{on}\ \partial\omega_{\VEC{a}}\cap\gamC \big\}
    \end{aligned} &
    \text{if $\VEC{a}\in\vertices{h}{b}$},
    \\
    \VEC{\Sigma}_h^{\VEC{a}} & \text{otherwise}
    \end{cases} \label{eq:definition Sigma h,N,C}
        \\
        \VEC{U}_h^{\VEC{a}} &\coloneqq \begin{cases}
        \VEC{U}_h(\omega_{\VEC{a}}) &
        \text{if $\VEC{a} \in \vertices{h}{D}$},
        \\
        \left\{\VEC{v}_h \in\VEC{U}_h(\omega_{\VEC{a}}) \ :\ (\VEC{v}_h,\VEC{z})_{\omega_{\VEC{a}}} = 0 \ \text{for any}\ \VEC{z}\in\VEC{RM}^d\right\} &
        \text{otherwise},
    \end{cases}\nonumber\\
    \VEC{\Lambda}_h^{\VEC{a}} &\coloneqq \VEC{\Lambda}_h(\omega_{\VEC{a}})\nonumber,
\end{align}
where $\bullet \in\{\rm dis, \rm lin\}$ and
\begin{gather}
    \hspace{-2cm}\VEC{g}_{\rm dis} = \Pi_{\VEC{\Sigma}_h \VEC{n}_{\omega_{\VEC{a}}}} \left(\psi_{\VEC{a}} \gN \right)
    \qquad\text{and}\qquad
    \VEC{g}_{\rm lin} = \VEC{0},\nonumber\\
    \VEC{P}_{\rm dis}(\VEC{u}_h^k) \coloneqq \left[P_{1,\gamma}^n(\VEC{u}_h^k)\right]_{\mathbb{R}^-} \VEC{n} + \left[\VEC{P}_{1,\gamma}^{\VEC{t}}(\VEC{u}_h^k)\right]_{S_h(\VEC{u}_h^k)}, \label{eq:Pdis definition}\\
    \VEC{P}_{\rm lin}(\VEC{u}_h^k) \coloneqq \left(P_{\rm lin}^{n,k-1}(\VEC{u}_h^k) - \left[P_{1,\gamma}^n(\VEC{u}_h^k)\right]_{\mathbb{R}^-}\right) \VEC{n} + \VEC{P}_{\rm lin}^{\VEC{t},k-1}(\VEC{u}_h^k) - \left[\VEC{P}_{1,\gamma}^{\VEC{t}}(\VEC{u}_h^k)\right]_{S_h(\VEC{u}_h^k)}, \label{eq:Plin definition}
\end{gather}
and $\VEC{RM}^d$ is the space of rigid-body motions, i.e., $\VEC{RM}^2 \coloneqq \left\{\VEC{b}+c(x_2,-x_1)^{\top} \ :\ \VEC{b}\in\mathbb{R}^2, c\in\mathbb{R}\right\}$ and $\VEC{RM}^3 \coloneqq \left\{\VEC{b}+\VEC{c}\times\VEC{x} \ :\ \VEC{b},\VEC{c}\in\mathbb{R}^3\right\}$.
Additionally, let $\VEC{y}^{\VEC{a},k}\in \VEC{RM}^d$ be defined by
\begin{equation*}
    \begin{aligned}
        (\VEC{y}^{\VEC{a},k}, \VEC{z})_{\omega_{\VEC{a}}}
        &= (-\psi_{\VEC{a}} \VEC{f} + \VEC{\sigma}(\VEC{u}_h^k)\VEC{\nabla}\psi_{\VEC{a}}, \VEC{z})_{\omega_{\VEC{a}}} - \left(\Pi_{\VEC{\Sigma}_h \VEC{n}_{\omega_{\VEC{a}}}} \left(\psi_{\VEC{a}} \gN \right),\VEC{z}\right)_{\partial\omega_{\VEC{a}}\cap \gamN} 
        \\
        &\quad - \Bigl(\Pi_{\VEC{\Sigma}_h \VEC{n}_{\omega_{\VEC{a}}}} \Bigl(\psi_{\VEC{a}} \Bigl(\left[P_{1,\gamma}^n(\VEC{u}_h^k)\right]_{\mathbb{R}^-} \VEC{n} + \left[\VEC{P}_{1,\gamma}^{\VEC{t}}(\VEC{u}_h^k)\right]_{S_h(\VEC{u}_h^k)} \Bigr)\Bigr), \VEC{z}\Bigr)_{\partial\omega_{\VEC{a}}\cap \gamC},
    \end{aligned}
\end{equation*}
for all $\VEC{z}\in\VEC{RM}^d$ if $\VEC{a}\in\vertices{h}{b}$, and $\VEC{y}^{\VEC{a},k} = \VEC{0}$ if $\VEC{a}\in\vertices{h}{i}$.

\begin{remark}[Friction contact condition]
    Comparing this description with that provided in \cite{DiPietro2022}, we have modified the terms $\VEC{P}_{\rm dis}(\VEC{u}_h^k)$ and $\VEC{P}_{\rm lin}(\VEC{u}_h^k)$ defining the local spaces $\VEC{\Sigma}_{h, {\rm N},{\rm C},{\rm dis}}^{\VEC{a}}$ and $\VEC{\Sigma}_{h, {\rm N},{\rm C},{\rm lin}}^{\VEC{a}}$, respectively, to account for the friction contact conditions.
    These modifications will enable us to recover the properties of the stress reconstruction outlined in point 4. of Lemma \ref{lem:reconstruction sigmahk properties}, as will be discussed later.  Additionally, they will facilitate the rewriting of the contact and friction estimator as \eqref{eq:rewrite contact estimator} and \eqref{eq:rewrite friction estimator}, respectively.
\end{remark}

\begin{construction}[Equilibrated stress reconstruction distinguishing the error components]\label{construction sigmahk}
  Let, for $\bullet \in \{\rm dis, lin\}$ and any vertex $\VEC{a}\in\mathcal{V}_h$, $(\VEC{\sigma}_{h,\bullet}^{\VEC{a},k},\VEC{r}_{h,\bullet}^{\VEC{a},k},\VEC{\lambda}_{h,\bullet}^{\VEC{a},k}) \in \VEC{\Sigma}_{h,{\rm N},{\rm C},\bullet}^{\VEC{a},k} \times \VEC{U}_h^{\VEC{a}} \times \VEC{\Lambda}_h^{\VEC{a}}$ be the solution to the following problem:
  \[
  \begin{alignedat}{4}
    (\VEC{\sigma}_{h,\bullet}^{\VEC{a},k},\VEC{\tau}_h)_{\omega_{\VEC{a}}} + (\VEC{r}_{h,\bullet}^{\VEC{a},k},\VEC{\rm div}\,
    \VEC{\tau}_h)_{\omega_{\VEC{a}}} + (\VEC{\lambda}_{h,\bullet}^{\VEC{a},k},\VEC{\tau}_h)_{\omega_{\VEC{a}}} &= (\VEC{\tau}_{h,\bullet}^{\VEC{a},k},\VEC{\tau}_h)_{\omega_{\VEC{a}}}
    & \qquad & \forall\VEC{\tau}_h\in\VEC{\Sigma}_h^{\VEC{a}},
    \\ 
    (\VEC{\rm div}\,
    \VEC{\sigma}_{h,\bullet}^{\VEC{a},k},\VEC{v}_h)_{\omega_{\VEC{a}}} &= (\VEC{v}_{h,\bullet}^{\VEC{a},k}, \VEC{v}_h)_{\omega_{\VEC{a}}}
    & \qquad & \forall\VEC{v}_h\in\VEC{U}_h^{\VEC{a}},
    \\ 
    (\VEC{\sigma}_{h,\bullet}^{\VEC{a},k},\VEC{\mu}_h)_{\omega_{\VEC{a}}} &= 0
    & \qquad & \forall\VEC{\mu}_h \in \VEC{\Lambda}_h^{\VEC{a}},
  \end{alignedat}
  \]
  where
  \begin{equation*}
    \VEC{\tau}_{h,\bullet}^{\VEC{a},k} \coloneqq \begin{cases}
      \psi_{\VEC{a}} \VEC{\sigma}(\VEC{u}_h^k) & \text{if $\bullet = \rm dis$}, \\
      0 &  \text{if $\bullet = \rm lin$},
    \end{cases}
    \qquad
    \VEC{v}_{h,\bullet}^{\VEC{a},k} \coloneqq \begin{cases}
      -\psi_{\VEC{a}} \VEC{f} + \VEC{\sigma}(\VEC{u}_h^k)\VEC{\nabla}\psi_{\VEC{a}} - \VEC{y}^{\VEC{a},k} &  \text{if $\bullet = \rm dis$},\\
      \VEC{y}^{\VEC{a},k} & \text{if $\bullet = \rm lin$}.
    \end{cases}
  \end{equation*}
  Extending $\VEC{\sigma}_{h,\bullet}^{\VEC{a},k}$ by zero outside the patch $\omega_{\VEC{a}}$, we set $\VEC{\sigma}_{h,\bullet}^k\coloneqq\sum_{\VEC{a}\in\mathcal{V}_h} \VEC{\sigma}_{h,\bullet}^{\VEC{a},k}$, and we define $\VEC{\sigma}_h^k \coloneqq \VEC{\sigma}_{h,\rm dis}^k + \VEC{\sigma}_{h,\rm lin}^k$.
\end{construction}

By definition, $\VEC{y}^{\VEC{a},k}$ ensures that the forcing terms $\VEC{v}_{h,\bullet}^{\VEC{a},k}$ satisfy the following compatibility conditions for $\VEC{a}\in\vertices{h}{b}\setminus \vertices{h}{D}$:
\begin{gather*}
    (\VEC{v}_{h,\rm dis}^{\VEC{a},k}, \VEC{z})_{\omega_{\VEC{a}}} = \left(\Pi_{\VEC{\Sigma}_h \VEC{n}_{\omega_{\VEC{a}}}} \left(\psi_{\VEC{a}} \gN \right),\VEC{z}\right)_{\partial\omega_{\VEC{a}}\cap \gamN} + \Bigl(\Pi_{\VEC{\Sigma}_h \VEC{n}_{\omega_{\VEC{a}}}} \psi_{\VEC{a}} \VEC{P}_{\rm dis}(\VEC{u}_h^k), \VEC{z}\Bigr)_{\partial\omega_{\VEC{a}}\cap \gamC},
    \\
    (\VEC{v}_{h,\rm lin}^{\VEC{a},k}, \VEC{z})_{\omega_{\VEC{a}}} = \left(\Pi_{\VEC{\Sigma}_h \VEC{n}_{\omega_{\VEC{a}}}} \psi_{\VEC{a}} \VEC{P}_{\rm lin}(\VEC{u}_h^k), \VEC{z}\right)_{\partial\omega_{\VEC{a}}\cap\gamC}
\end{gather*}
for any $\VEC{z}\in\VEC{RM}^d$, recalling that $\VEC{P}_{\rm dis}(\VEC{u}_h^k)$ and $\VEC{P}_{\rm lin}(\VEC{u}_h^k)$ are defined by \eqref{eq:Pdis definition} and \eqref{eq:Plin definition}, respectively.
The obtained tensor $\VEC{\sigma}_h^k$ is an equilibrated stress reconstruction in the sense of Definition \ref{def:equilibrated stress reconstruction} as stated by the following lemma.

\begin{lemma}[Properties of $\VEC{\sigma}_h^k$]\label{lem:reconstruction sigmahk properties}
    Let $\VEC{\sigma}_h^k$ be defined by Construction~\ref{construction sigmahk}. Then
    \begin{enumerate}
        \item $\VEC{\sigma}_{h,\rm dis}^k, \VEC{\sigma}_{h,\rm lin}^k, \VEC{\sigma}_h^k\in\MAT{H}(\emph{\textbf{div}},\Omega)$;
        \item For every $T\in\mathcal{T}_h$ and every $\VEC{v}_T\in \VEC{\mathcal{P}}^{p-1}(T)$, $(\VEC{\rm div}\,         \VEC{\sigma}_h^k+ \VEC{f},\VEC{v}_T)_T=0$;
        \item For every $F\in\faces{h}{N}$ and every $\VEC{v}_F\in\VEC{\mathcal{P}}^p(F)$, $(\VEC{\sigma}_h^k\VEC{n},\VEC{v}_F)_F=(\gN,\VEC{v}_F)_F$; 
        \item For every $F\in\faces{h}{C}$ and every $\VEC{v}_F\in\VEC{\mathcal{P}}^p(F)$, 
        \begin{gather*}
            (\sigma_{h,\rm dis}^{k,n},v_F^n)_F = \left(\left[P_{1,\gamma}^n(\VEC{u}_h^k)\right]_{\mathbb{R}^-}, v_F^n\right)_F, 
            \qquad
            (\VEC{\sigma}_{h,\rm dis}^{k,\VEC{t}},\VEC{v}_F^{\VEC{t}})_F = \left(\left[\VEC{P}_{1,\gamma}^{\VEC{t}}(\VEC{u}_h^k)\right]_{S_h(\VEC{u}_h^k)}, \VEC{v}_F^{\VEC{t}}\right)_F, \\
            (\sigma_{h,\rm lin}^{k,n}, v_F^n)_F = \left(P_{\rm lin}^{n, k-1}(\VEC{u}_h^k) - \left[P_{1,\gamma}^n(\VEC{u}_h^k)\right]_{\MAT{R}^-}, v_F^n\right)_F,
        \end{gather*}
        and 
        \begin{equation*}
            (\VEC{\sigma}_{h,\rm lin}^{k,\VEC{t}},\VEC{v}_F^{\VEC{t}})_F = \left(P_{\rm lin}^{k-1,\VEC{t}}(\VEC{u}_h^k) - \left[\VEC{P}_{1,\gamma}^{\VEC{t}}(\VEC{u}_h^k)\right]_{S_h(\VEC{u}_h^k)}, \VEC{v}_F^{\VEC{t}}\right)_F.
        \end{equation*}
    \end{enumerate}
\end{lemma}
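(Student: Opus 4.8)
The plan is to verify the four properties by exploiting the mixed finite element structure of the patch problems in Construction~\ref{construction sigmahk}, following the blueprint of \cite{DiPietro2022} and adapting the boundary-term bookkeeping to the frictional setting. Throughout, the key facts I would use are: (i) the Arnold--Falk--Winther pair $(\VEC{\Sigma}_h,\VEC{U}_h,\VEC{\Lambda}_h)$ is inf--sup stable and the second equation of each patch problem is exactly a divergence constraint tested against $\VEC{U}_h^{\VEC{a}}$; (ii) the prescribed normal traces built into $\VEC{\Sigma}_{h,{\rm N},{\rm C},\bullet}^{\VEC{a}}$ are precisely $L^2(F)$-projections onto $\VEC{\Sigma}_h\VEC{n}_{\omega_{\VEC{a}}}|_F$ of $\psi_{\VEC{a}}$ times the relevant surface density; and (iii) the hat functions satisfy $\sum_{\VEC{a}\in\mathcal{V}_h}\psi_{\VEC{a}}\equiv 1$ with $\sum_{\VEC{a}}\VEC{\nabla}\psi_{\VEC{a}}\equiv\VEC{0}$, and each $\psi_{\VEC{a}}$ is supported in $\omega_{\VEC{a}}$.

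\emph{Property 1 ($\MAT{H}(\textbf{div})$-conformity).} Each local solution $\VEC{\sigma}_{h,\bullet}^{\VEC{a},k}\in\VEC{\Sigma}_h^{\VEC{a}}\subset\VEC{\Sigma}_h(\omega_{\VEC{a}})\subset\MAT{H}(\textbf{div},\omega_{\VEC{a}})$, and by construction its normal trace vanishes on $\partial\omega_{\VEC{a}}\setminus\partial\Omega$ (and on $\partial\omega_{\VEC{a}}$ entirely for interior vertices); hence its zero-extension lies in $\MAT{H}(\textbf{div},\Omega)$. Summing over $\VEC{a}$ and recalling $\VEC{\sigma}_h^k=\VEC{\sigma}_{h,\rm dis}^k+\VEC{\sigma}_{h,\rm lin}^k$ gives the claim for all three tensors.

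\emph{Property 2 (elementwise equilibrium).} Fix $T$ and $\VEC{v}_T\in\VEC{\mathcal{P}}^{p-1}(T)$. For each vertex $\VEC{a}\in\mathcal{V}_T$, choose in the second equation of the $\bullet$-patch problem the test function equal to $\VEC{v}_T$ on $T$ and extended suitably; using $\VEC{U}_h^{\VEC{a}}$ this is legitimate after subtracting, when $\VEC{a}\notin\vertices{h}{D}$, the rigid-body component against which $\VEC{U}_h^{\VEC{a}}$ is orthogonal — this is exactly why $\VEC{y}^{\VEC{a},k}$ was introduced, and the compatibility identities displayed before the lemma guarantee solvability. Adding the $\rm dis$ and $\rm lin$ equations, the $\VEC{y}^{\VEC{a},k}$ contributions cancel and one gets $(\VEC{\rm div}\,\VEC{\sigma}_h^{\VEC{a},k},\VEC{v}_T)_T=(-\psi_{\VEC{a}}\VEC{f}+\VEC{\sigma}(\VEC{u}_h^k)\VEC{\nabla}\psi_{\VEC{a}},\VEC{v}_T)_T$; summing over $\VEC{a}\in\mathcal{V}_T$ and using $\sum_{\VEC{a}}\psi_{\VEC{a}}=1$, $\sum_{\VEC{a}}\VEC{\nabla}\psi_{\VEC{a}}=\VEC{0}$ on $T$ yields $(\VEC{\rm div}\,\VEC{\sigma}_h^k,\VEC{v}_T)_T=-(\VEC{f},\VEC{v}_T)_T$.

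\emph{Properties 3 and 4 (boundary traces).} Here one uses the explicit normal-trace constraints encoded in $\VEC{\Sigma}_{h,{\rm N},{\rm C},\bullet}^{\VEC{a}}$. For a face $F\in\faces{h}{N}$ with vertices $\VEC{a}\in\vertices{h}{b}$, $\VEC{\sigma}_{h,\rm dis}^{\VEC{a},k}\VEC{n}|_F=\Pi_{\VEC{\Sigma}_h\VEC{n}}(\psi_{\VEC{a}}\gN)$ and $\VEC{\sigma}_{h,\rm lin}^{\VEC{a},k}\VEC{n}|_F=\VEC{0}$; testing against $\VEC{v}_F\in\VEC{\mathcal{P}}^p(F)$, the $L^2$-projection is invisible since $\VEC{v}_F\in\VEC{\Sigma}_h\VEC{n}|_F$, so $(\VEC{\sigma}_h^{\VEC{a},k}\VEC{n},\VEC{v}_F)_F=(\psi_{\VEC{a}}\gN,\VEC{v}_F)_F$; summing over the (two in $2$D, three in $3$D) vertices of $F$ and using partition of unity gives Property~3. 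Property~4 is identical in spirit on $\faces{h}{C}$: the $\rm dis$ part carries normal trace $\Pi_{\VEC{\Sigma}_h\VEC{n}}(\psi_{\VEC{a}}\VEC{P}_{\rm dis}(\VEC{u}_h^k))$, whose normal and tangential components are $\psi_{\VEC{a}}[P_{1,\gamma}^n(\VEC{u}_h^k)]_{\MAT{R}^-}$ and $\psi_{\VEC{a}}[\VEC{P}_{1,\gamma}^{\VEC{t}}(\VEC{u}_h^k)]_{S_h(\VEC{u}_h^k)}$ by \eqref{eq:Pdis definition}, while the $\rm lin$ part carries $\psi_{\VEC{a}}\VEC{P}_{\rm lin}(\VEC{u}_h^k)$, whose components are read off from \eqref{eq:Plin definition}; testing against $\VEC{v}_F\in\VEC{\mathcal{P}}^p(F)$ (decomposed into normal and tangential parts) and summing over the vertices of $F$ yields all four displayed identities. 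The one subtlety I would watch carefully is the interplay between the $L^2$-projection $\Pi_{\VEC{\Sigma}_h\VEC{n}_{\omega_{\VEC{a}}}}$ and the decomposition of surface tractions into normal/tangential components: one must argue that $\Pi_{\VEC{\Sigma}_h\VEC{n}}$ acts componentwise compatibly with the $\VEC{n}$/$\VEC{t}$ splitting on each flat face $F$, so that taking the normal (resp. tangential) part commutes with the projection when tested against $\VEC{v}_F^n\VEC{n}$ (resp. $\VEC{v}_F^{\VEC{t}}$); this is what makes the separate normal and tangential identities in Property~4 hold, and it is the main place where the frictional case differs from \cite{DiPietro2022}. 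Everything else is a direct transcription of the frictionless argument.
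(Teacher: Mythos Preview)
Your proposal is correct and follows essentially the same approach as the paper: refer to \cite{DiPietro2022} for Properties 1--3, and for Property~4 exploit the normal-trace constraints in $\VEC{\Sigma}_{h,{\rm N},{\rm C},\bullet}^{\VEC{a}}$, the fact that $\VEC{\mathcal{P}}^p(F)=(\VEC{\Sigma}_h\VEC{n})|_F$ so the projection $\Pi_{\VEC{\Sigma}_h\VEC{n}_{\omega_{\VEC{a}}}}$ is transparent, and the partition of unity $\sum_{\VEC{a}\in\mathcal{V}_F}\psi_{\VEC{a}}=1$ on $F$. The only tactical remark is that the ``subtlety'' you flag is handled in the paper by first proving the full vector identity $(\VEC{\sigma}_{h,\bullet}^k\VEC{n},\VEC{v}_F)_F=(\VEC{P}_\bullet(\VEC{u}_h^k),\VEC{v}_F)_F$ for arbitrary $\VEC{v}_F\in\VEC{\mathcal{P}}^p(F)$ and only then specializing $\VEC{v}_F$ to purely normal or purely tangential test functions; this ordering makes the commutation of the projection with the $\VEC{n}/\VEC{t}$ splitting automatic and avoids any separate argument.
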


\begin{proof}
  For the proof of 1.--3., the arguments of \cite[Lemma 16]{DiPietro2022} can be easily adapted here. We focus on the proof of 4.
  Let $F\in\faces{h}{C}$ and let $\VEC{v}_F\in \mathcal{P}^p(F) = (\VEC{\Sigma}_h\VEC{n})|_F$. Then, applying the definition \eqref{eq:definition Sigma h,N,C} of $\VEC{\Sigma}_{h, {\rm N}, {\rm C}, \bullet}^{\VEC{a}}$, we get, for $\bullet\in \{{\rm dis}, {\rm lin}\}$,
  \begin{equation*}
    (\VEC{\sigma}_{h,\bullet}^{k} \VEC{n}, \VEC{v}_F)_F = \sum_{\VEC{a} \in\mathcal{V}_F} (\VEC{\sigma}_{h,\bullet}^{\VEC{a}, k} \VEC{n}, \VEC{v}_F)_F = \sum_{\VEC{a}\in\mathcal{V}_F} \left(\psi_{\VEC{a}} \VEC{P}_{\bullet} (\VEC{u}_h^k), \VEC{v}_F\right)_F = \left(\VEC{P}_{\bullet}(\VEC{u}_h^k), \VEC{v}_F\right)_F,
  \end{equation*}
  where we have used the fact that $\sum_{\VEC{a}} \in \mathcal{V}_F \psi_{\VEC{a}}(\VEC{x}) = 1$ for any $\VEC{x} \in F$ to conclude.
  Point 4. follows using the decomposition into normal and tangential components and observing that, by the definitions \eqref{eq:Pdis definition} of $\VEC{P}_{\rm dis}$ and \eqref{eq:Plin definition} of $\VEC{P}_{\rm lin}$,
  \begin{gather*}
    P_{\rm dis}^n(\VEC{u}_h^k) = \left[P_{1,\gamma}^n(\VEC{u}_h^k)\right]_{\mathbb{R}^-},
    \qquad
    \VEC{P}_{\rm dis}^{\VEC{t}}(\VEC{u}_h^k) = \left[\VEC{P}_{1,\gamma}^{\VEC{t}}(\VEC{u}_h^k)\right]_{S_h(\VEC{u}_h^k)},
    \\
    P_{\rm lin}^n(\VEC{u}_h^k) = P_{\rm lin}^{n,k-1}(\VEC{u}_h^k) - \left[P_{1,\gamma}^n(\VEC{u}_h^k)\right]_{\mathbb{R}^-},
    \qquad
    \VEC{P}_{\rm lin}^{\VEC{t}}(\VEC{u}_h^k) = \VEC{P}_{\rm lin}^{\VEC{t},k-1}(\VEC{u}_h^k) - \left[\VEC{P}_{1,\gamma}^{\VEC{t}}(\VEC{u}_h^k)\right]_{S_h(\VEC{u}_h^k)}.
    \qedhere
  \end{gather*}
\end{proof}

\begin{remark}[Alternative expressions of local estimators]
    Thanks to Lemma~\ref{lem:reconstruction sigmahk properties}, we can rewrite the oscillation \eqref{eq:oscillation estimator uhk}, Neumann \eqref{eq:Neumann estimator uhk}, contact \eqref{eq:contact estimator uhk}, and friction \eqref{eq:friction estimator uhk} estimators as follows:
    \begin{subequations}
        \begin{gather}
            \eta_{\rm osc,T}^k = \frac{h_T}{\pi} \left\lVert \VEC{f}-\VEC{\Pi}_T^{p-1} \VEC{f} \right\rVert_T, \label{eq:rewrite oscillation estimator}\\
            \eta_{\rm Neu,T}^k = \sum_{F\in \faces{T}{C}} C_{t,T,F}\, h_F^{\nicefrac{1}{2}} \left\lVert \gN-\VEC{\Pi}_F^p \gN\right\rVert_F, \label{eq:rewrite Neumann estimator} \allowdisplaybreaks\\
            \eta_{\rm cnt,T}^k = \sum_{F\in\faces{T}{C}} h_F^{\nicefrac{1}{2}} \left\lVert \left[P_{1,\gamma}^n(\VEC{u}_h^k)\right]_{\mathbb{R}^-}
            - \Pi_F^p \left[P_{1,\gamma}^n(\VEC{u}_h^k)\right]_{\mathbb{R}^-} \right\rVert_F, \label{eq:rewrite contact estimator}\\
            \eta_{\rm frc,T}^k = \sum_{F\in\faces{T}{C}} h_F^{\nicefrac{1}{2}} \left\lVert \left[\VEC{P}_{1,\gamma}^{\VEC{t}}(\VEC{u}_h^k)\right]_{S_h(\VEC{u}_h^k)}
            - \VEC{\Pi}_F^p \left[\VEC{P}_{1,\gamma}^{\VEC{t}}(\VEC{u}_h^k)\right]_{S_h(\VEC{u}_h^k)} \right\rVert_F, \label{eq:rewrite friction estimator}
        \end{gather}
    \end{subequations}
    where $\VEC{\Pi}_T^{p-1}$, $\VEC{\Pi}_F^p$, and $\Pi_F^p$ denote the $L^2$-orthogonal projectors on the polynomial spaces $\VEC{\mathcal{P}}^{p-1}(T)$, $\VEC{\mathcal{P}}^p(F)$, and $\mathcal{P}^p(F)$, respectively.
    Here, $\VEC{\mathcal{P}}^p(F)$ is either $[\mathcal{P}^p(F)]^d$ or $[\mathcal{P}^p(F)]^{d-1}$ depending on the context.
\end{remark}

\section{Numerical results}\label{sec:numerical results}

In this section, we present a panel of numerical results obtained applying Algorithm~\ref{algorithm}, using the open source finite element library FreeFem++ (see \cite{FreeFEM} and visit \url{https://freefem.org/} for details). 
With this flexible tool, we are able to implement the discrete problem \eqref{eq:Nitsche-based_method} and compute the estimators \eqref{eq:local estimators distinguishing}--\eqref{eq:global estimators distinguishing} in a manner closely resembling their mathematical description. 
Specifically, the command {\tt trunc} has been used to implement the local problems of Construction~\ref{construction sigmahk} in combination with the definition of hat function $\psi_{\VEC{a}}$. This command is also used to obtain sequences of uniformly refined meshes.
Adaptive mesh refinement is, on the other hand, obtained using the {\tt splitmesh}, ensuring that we automatically generate a conformal mesh satisfying the regularity requirements.

\subsection{Tresca friction}\label{sec:numerical results:tresca rectangular}

\begin{figure}[!tb]
    \centering
    \includegraphics{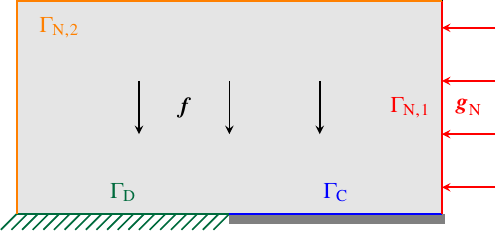}
    \caption{Rectangular domain of the numerical cases of Section~\ref{sec:numerical results:tresca rectangular} and \ref{sec:numerical results:coulomb rectangular domain} with representation of internal and lateral forces, and division of the domain's boundary.
    In particular, a uniform load $\gN$ is enforced on $\Gamma_{{\rm N},1}$, while homogeneous Neumann conditions are enforced on $\Gamma_{{\rm N},2}$.
    The portion of the boundary $\gamD$ is fixed, while contact is possible on $\gamC$.
    }
    \label{fig:domain illustration}
\end{figure}

\begin{figure}[!tb]
  \centering
  \begin{subfigure}{0.45\textwidth}
    \centering
    \includegraphics[width=\textwidth]{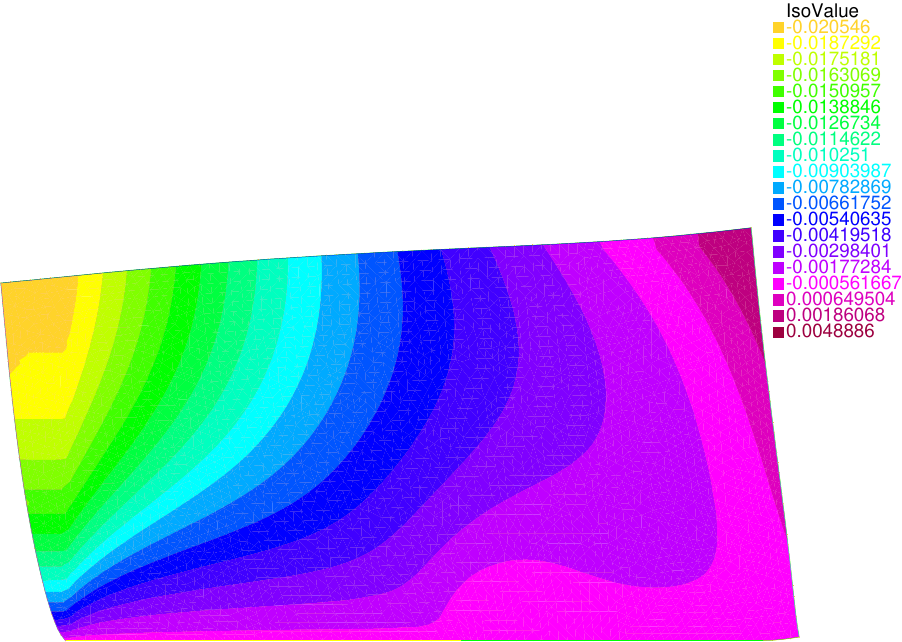}
    \subcaption{Vertical displacement in the deformed domain (amplification factor = 5).}
    \label{fig:Rnorm of displacement}
  \end{subfigure}
  \hfill
  \begin{subfigure}{0.45\textwidth}
    \centering
    \includegraphics[width=\textwidth]{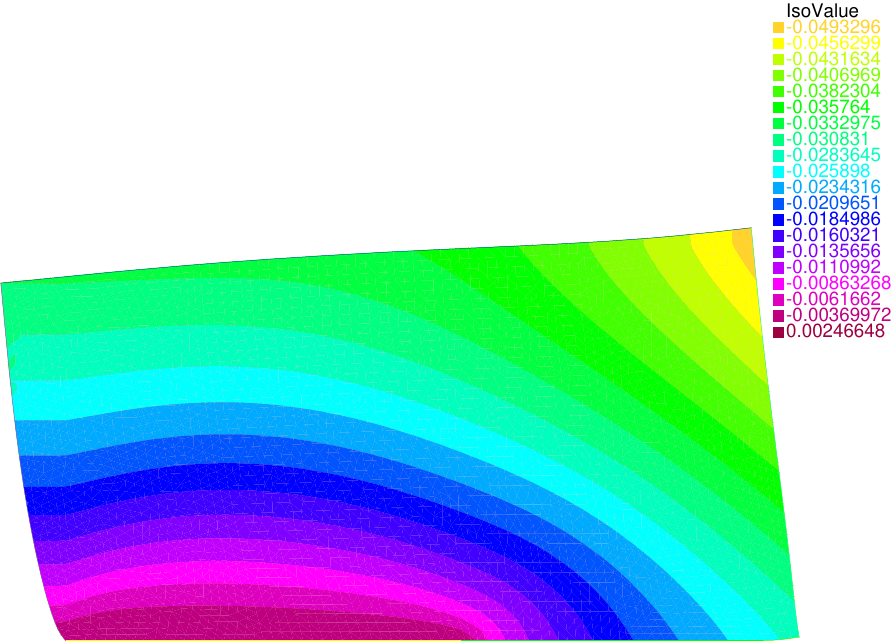}
    \subcaption{Horizontal displacement in the deformed domain (amplification factor = 5).}
    \label{fig:Rhorizontal displacement}
  \end{subfigure}
  \caption{Vertical (\emph{left}) and horizontal displacement (\emph{right}) in the deformed configuration for the Tresca test case of Section~\ref{sec:numerical results:tresca rectangular}.}
  \label{fig:Rdisplacement}
\end{figure}

Let $\Omega = (-1,1)\times (0,1)$ be a rectangular domain with the configuration represented in Figure~\ref{fig:domain illustration}, characterized by the following parameters: Young modulus $E=1$ and Poisson ratio $\nu = 0.3$ (resulting in the Lamé coefficients $\mu\approx 0.385$ and $\lambda\approx 0.577$), a weight force $\VEC{f} = (0,-0.02)$, a horizontal surface loading $\gN = (-0.028,0)$ on $\Gamma_{N,1}$ and $\gN=\VEC{0}$ on $\Gamma_{N,2}$. The Nitsche parameter is set to $\gamma_0 = 10 E$, and Tresca friction conditions are enforced on the contact boundary portion $\gamC$, i.e.,
\begin{equation*}
     \left[\VEC{P}^{\VEC{t}}_{1,\gamma}(\VEC{u}_h)\right]_{S_h(\VEC{u}_h)} = \begin{cases}
         \VEC{P}^{\VEC{t}}_{1,\gamma}(\VEC{u}_h) & \quad {\rm if}\ \bigl\lvert \VEC{P}^{\VEC{t}}_{1,\gamma}(\VEC{u}_h) \bigr\rvert \leq s, \bigskip\\
         s \displaystyle\frac{\VEC{P}^{\VEC{t}}_{1,\gamma}(\VEC{u}_h)}{\bigl\lvert \VEC{P}^{\VEC{t}}_{1,\gamma}(\VEC{u}_h) \bigr\rvert} & \quad \rm otherwise,
     \end{cases}
\end{equation*}
with constant friction function defined as $s = 5\cdot 10^{-3}$ on all $\gamC$.
For this problem, a closed-form solution is not available. Therefore, we adopt as reference solution the solution $\bar{\VEC{u}}_h$ of the discrete problem \eqref{eq:Nitsche-based_method} obtained using $\mathcal{P}^2$ Lagrange finite elements on a fine mesh with mesh size $h\approx 8.34\cdot 10^{-3}$. The approximate solution $\VEC{u}_h$ is obtained using $\mathcal{P}^1$ Lagrange finite elements, and we employ the Newton method outlined in Subsection~\ref{subsec:a posteriori distinguishing} for its computation. We remark that, although $\mathcal{P}^1$ finite elements are known to lock in the quasi-incompressible limit, it is admissible for the set of parameters considered here and it aligns with the use of the lowest-order mixed finite elements available in FreeFem++ for the computation of the equilibrated stress reconstructions described in Section~\ref{sec:stress reconstruction}.
Figure~\ref{fig:Rhorizontal displacement} shows the vertical and horizontal displacement in the deformed configuration with an amplification factor equal to 5. In this configuration, the domain is in contact with the rigid foundation $y=0$ in a non-empty interval $I_C$ which is approximately $(0.035,0.844)$.

\begin{figure}[!tb]
    \centering
    \begin{subfigure}{0.49\textwidth}
        \centering
        \includegraphics[width=0.95\textwidth]{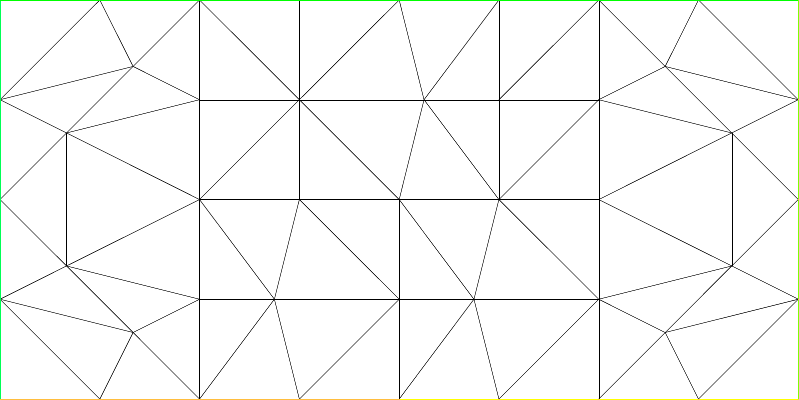}
        \subcaption{Initial mesh}
        \label{fig:initial mesh}
    \end{subfigure}
    \hfill
    \begin{subfigure}{0.49\textwidth}
        \centering
        \includegraphics[width=0.95\textwidth]{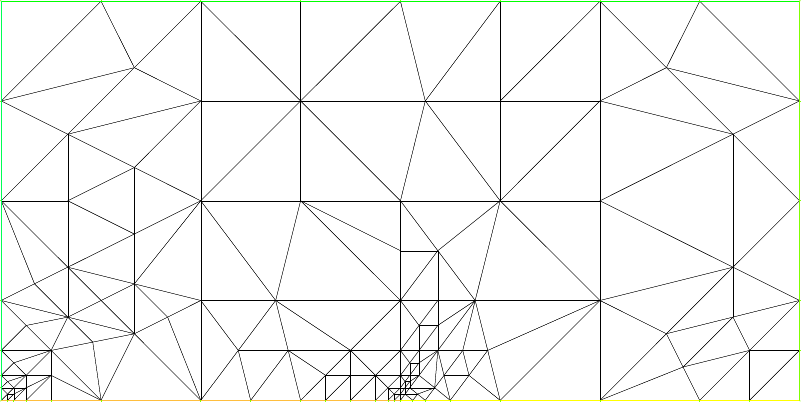}
        \subcaption{$4^{\rm rd}$ mesh refinement iteration}
        \label{fig:3rd spatial iteration}
    \end{subfigure}\vspace{0.3cm}\\
    \begin{subfigure}{0.49\textwidth}
        \centering
        \includegraphics[width=0.95\textwidth]{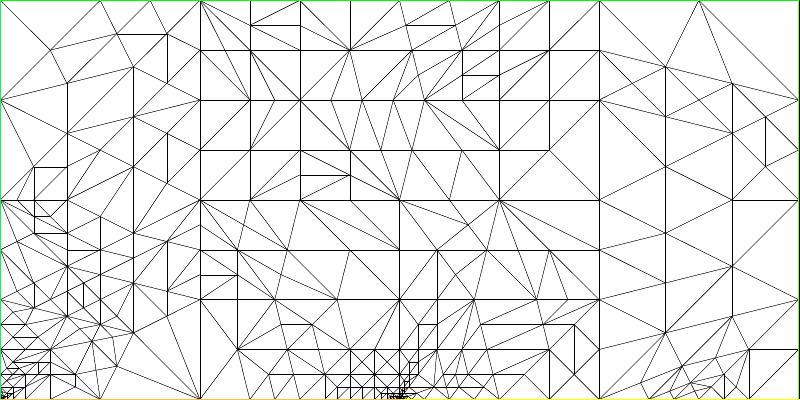}
        \subcaption{$8^{\rm th}$ mesh refinement iteration}
        \label{fig:7th spatial iteration}
    \end{subfigure}
    \hfill
    \begin{subfigure}{0.49\textwidth}
        \centering
        \includegraphics[width=0.95\textwidth]{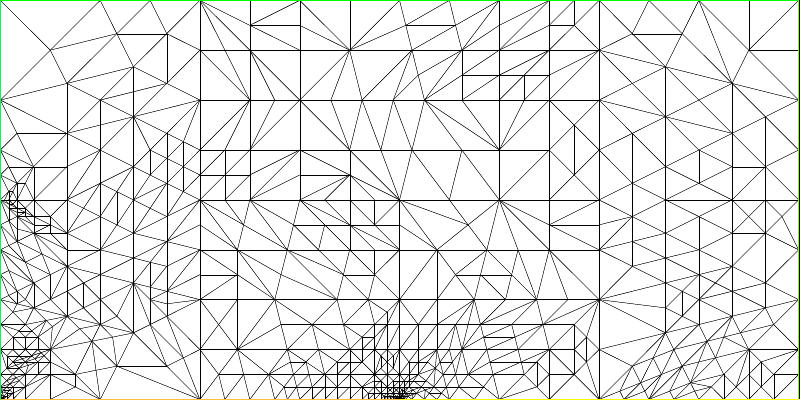}
        \subcaption{$10^{\rm th}$ mesh refinement iteration}
        \label{fig:10th spatial iterations}
    \end{subfigure}
    \caption{Initial mesh and adaptively refined mesh after 4, 8, and 10 remeshing steps, respectively, for the Tresca test case of Section~\ref{sec:numerical results:tresca rectangular}.}
    \label{fig: mesh adaptive refinement}
\end{figure}

\begin{figure}[!tb]
  \centering
  \begin{subfigure}{0.49\textwidth}
    \centering
    \includegraphics[scale=0.73]{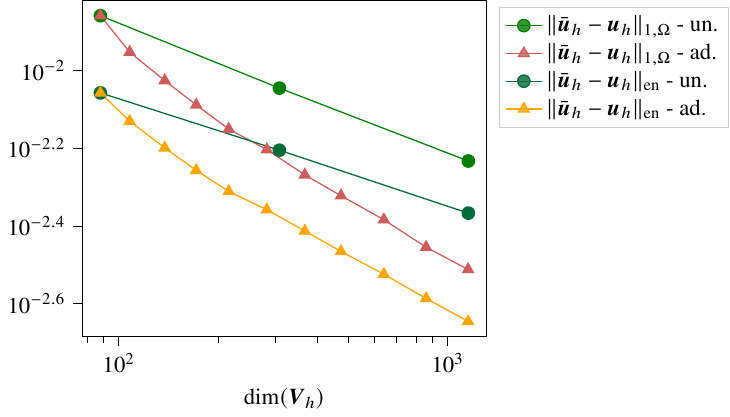}
    \subcaption{$H^1$-norm $\normHuno{\bar{\VEC{u}}_h-\VEC{u}_h}$ and energy norm $\energynorm{\bar{\VEC{u}}_h-\VEC{u}_h}$.}
    \label{fig:H1 norm and energy norm}
  \end{subfigure}
  \vspace{6mm}
  \hfill \\
  \begin{subfigure}{0.49\textwidth}
    \centering
    \includegraphics[scale=0.73]{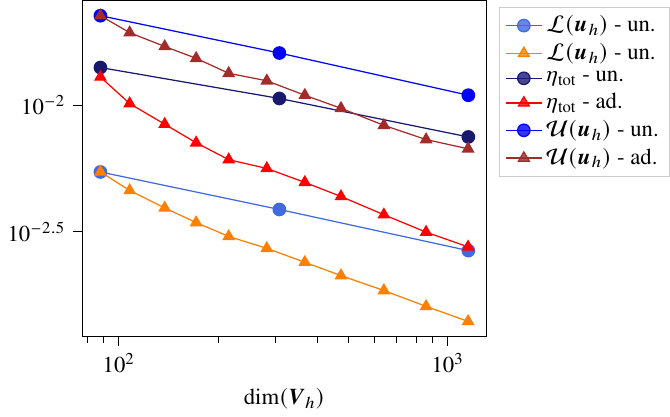}
    \subcaption{Global total estimator $\eta_{\rm tot}$, lower bound $\mathcal{L}(\VEC{u}_h)$, and upper bound $\mathcal{U}(\VEC{u}_h$).}
    \label{fig:lower, total estimator, upper}
  \end{subfigure}
  \hfill
  \begin{subfigure}{0.49\textwidth}
    \centering
    \includegraphics[scale=0.73]{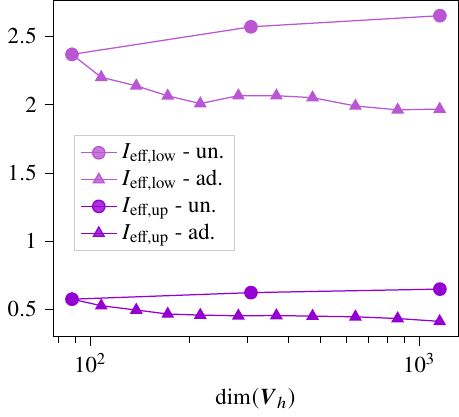}
    \subcaption{Effectivity indices of the lower bound $I_{\rm eff, low}$ and the upper bound $I_{\rm eff, up}$.}
    \label{fig:effectivity index}
  \end{subfigure}
  \caption{Comparison between uniform and adaptive refinement (circles and triangles, respectively) for the Tresca test case of Section~\ref{sec:numerical results:tresca rectangular}.}
\end{figure}

\begin{figure}[!tb]
    \medskip
  \centering
  \begin{subfigure}{0.49\textwidth}
    \centering
    \includegraphics[scale=0.8]{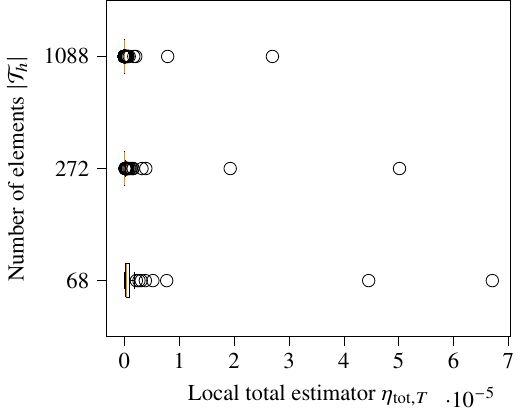}
  \end{subfigure}
  \hfill
  \begin{subfigure}{0.49\textwidth}
    \centering
    \includegraphics[scale=0.8]{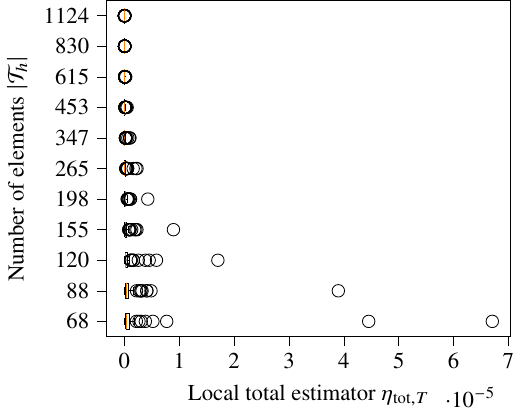}
  \end{subfigure}
  \caption{Distribution of the local total estimator $\eta_{\rm tot,T}$ for each spatial step with uniform ({\it left}) and adaptive ({\it right}) mesh refinement for the Tresca test case of Section~\ref{sec:numerical results:tresca rectangular}.}
  \label{fig:distribution total estimator}
\end{figure}

In our adaptive approach, we base the refinement of the mesh on the distribution of the total local estimator $\eta_{\rm tot,T}$ \eqref{eq:local total estimator}, as stated by Algorithm~\ref{algorithm}. Here, for the sake of simplicity, we omit the superscript $k$.
Starting with the initial coarse mesh in Figure~\ref{fig:initial mesh}, after 4, 8, and 10 steps of adaptive spatial remeshing, we get the meshes shown in Figures~\ref{fig:3rd spatial iteration}, \ref{fig:7th spatial iteration}, and \ref{fig:10th spatial iterations}, respectively. In this example, at least 6.2\% of the elements are refined at each refinement iteration. 
Figure~\ref{fig:10th spatial iterations} shows that most of the refinement is along the contact boundary part $\gamC$ and at the endpoints of $\gamD$, where two singularities arise due to the homogenous Dirichlet boundary conditions.
In Figure~\ref{fig:H1 norm and energy norm}, we compare uniform and adaptive convergence focusing on the $H^1$-norm $\normHuno{\bar{\VEC{u}}-\VEC{u}_h}$ and energy norm $\energynorm{\bar{\VEC{u}}-\VEC{u}_h}$ defined by \eqref{eq:energy norm}.
As expected, the rate of convergence with respect to the number of degrees of freedom $\rm dim(\VEC{V}_h)$ is better using the adaptive approach. Specifically, the asymptotic rates of convergence for the $H^1$-norm and energy norms are approximately 0.328 and 0.282 in the uniform case, and 0.450 and 0.463 in the adaptive one.
In Figure~\ref{fig:lower, total estimator, upper}, we visualize the value of the global total estimator constructed from the definition of the local total estimator \eqref{eq:local total estimator} as
\begin{equation*}
    \eta_{\rm tot} \coloneqq \left(\sum_{T\in\mathcal{T}_h} \left(\eta_{{\rm tot},T}\right)^2\right)^{\nicefrac{1}{2}},
\end{equation*}
and we compare it with the following quantities defined from Theorem~\ref{th:control energy norm} and Theorem~\ref{th:control dual norm}, respectively:
\begin{equation}\label{eq:lower bound}
    \mathcal{L}(\VEC{u}_h) \coloneqq \mu^{\nicefrac{1}{2}} \energynorm{\bar{\VEC{u}}_h-\VEC{u}_h}
\end{equation}
and
\begin{equation}\label{eq:upper bound}
    \begin{split}
        \mathcal{U}(\VEC{u}_h) \coloneqq& (d \lambda + 4\mu)^{\nicefrac{1}{2}} \energynorm{\bar{\VEC{u}}_h-\VEC{u}_h} + \left(\sum_{F\in\faces{h}{C}} h_F \left\lVert \sigma^n(\bar{\VEC{u}}_h)-\left[P_{1,\gamma}^n(\VEC{u}_h)\right]_{\mathbb{R}^-}\right\rVert_F^2\right)^{\nicefrac{1}{2}} \\
        &+ \left(\sum_{F\in\faces{h}{C}} h_F \left\lVert \VEC{\sigma^t}(\bar{\VEC{u}}_h)-\left[\VEC{P}_{1,\gamma}^{\VEC{t}}(\VEC{u}_h)\right]_{S_h(\VEC{u}_h)}\right\rVert_F^2\right)^{\nicefrac{1}{2}}.
    \end{split}
\end{equation}
The corresponding effectivity indices shown by Figure~\ref{fig:effectivity index} are defined in the usual way:
\begin{equation}\label{eq:effectivity indices}
    I_{\text{eff,low}} \coloneqq \frac{\eta_{\text{tot}}}{\mathcal{L}(\VEC{u}_h)} = \frac{\eta_{\rm tot}}{\mu^{\nicefrac{1}{2}} \energynorm{\bar{\VEC{u}}_h-\VEC{u}_h}}
    \qquad \text{and} \qquad
    I_{\text{eff,up}} \coloneqq \frac{\eta_{\text{tot}}}{\mathcal{U}(\VEC{u}_h)}.
\end{equation}
Notice that, for both the uniform and adaptive approaches, at the end of each mesh refinement iteration we get $\mathcal{L}(\VEC{u}_h) < \eta_{\rm tot} < \mathcal{U}(\VEC{u}_h)$ or, equivalently $I_{\text{eff,low}} > 1$ and $I_{\text{eff,up}} < 1$, validating the results \eqref{eq:control energy norm} and \eqref{eq:control dual norm}.
Figure~\ref{fig:distribution total estimator} displays the evolution of the distribution of the local total estimator when refining the mesh with the uniform approach ({\it left}) and with the adaptive one ({\it right}). In particular, with the adaptive approach, the interval containing all the local estimators $\{\eta_{\rm tot,T}\}_{T\in\mathcal{T}_h}$ progressively narrows with each refinement step, and the maximum value decreases significantly faster than with the uniform approach. Consequently, the distribution of the values of $\eta_{\rm tot,T}$ becomes more uniform in the adaptive case.

\begin{table}[!tb]
  \centering
	\begin{tabular}{|c|ccccccccccc|}
		\toprule
		& Initial & 1\textsuperscript{st} & 2\textsuperscript{nd} & 3\textsuperscript{rd} & 4\textsuperscript{th} & 5\textsuperscript{th} & 6\textsuperscript{th} & 7\textsuperscript{th} & 8\textsuperscript{th} & 9\textsuperscript{th} & 10\textsuperscript{th} \\
		\midrule
		$N_{\text{lin}}$ & 3 & 3 & 3 & 3 & 4 & 4 & 4 & 5 & 5 & 5 & 5 \\
		\bottomrule
	\end{tabular}
        \caption{Number of Newton iterations at each refinement step of Algorithm~\ref{algorithm} for the Tresca test case of Section~\ref{sec:numerical results:tresca rectangular}.}
        \label{tab:Newton steps}
\end{table}

\begin{figure}[!tb]
  \centering
  \begin{subfigure}{0.98\textwidth}
    \centering
    \hspace{1cm}
    \includegraphics[width=0.48\textwidth]{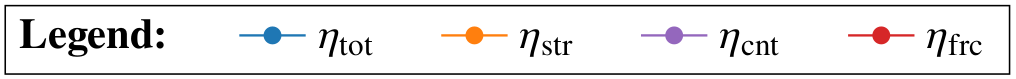}
    \bigskip\\
    \includegraphics[scale=0.8]{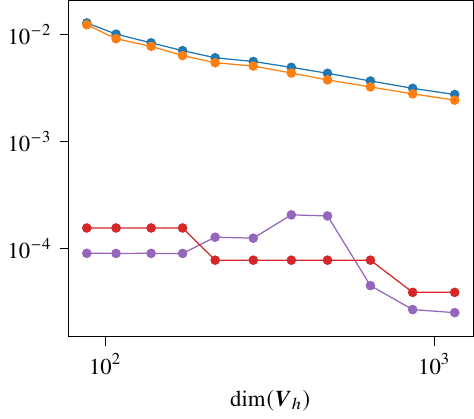}
    \caption{Global estimators}
    \label{fig:global estimators}
  \end{subfigure}
  \vspace{6mm}
  \hfill \\
  \begin{subfigure}{0.49\textwidth}
    \centering
    \includegraphics[scale=0.8]{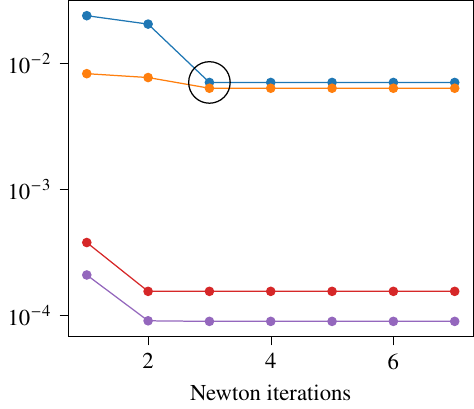}
    \caption{2\textsuperscript{nd} adaptively refined mesh}
    \label{fig:2nd adaptive estimators}
  \end{subfigure}
  \begin{subfigure}{0.49\textwidth}
    \centering
    \includegraphics[scale=0.8]{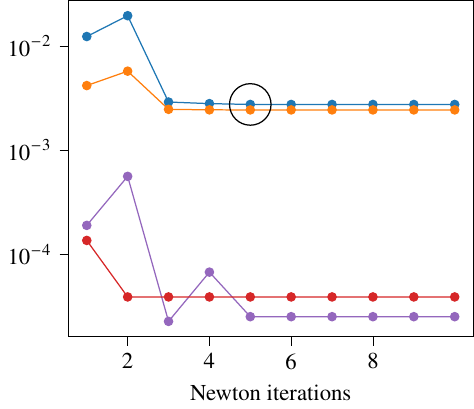}
    \caption{10\textsuperscript{th} adaptively refined mesh}
    \label{fig:9th adaptive estimators}
  \end{subfigure}
  \caption{Evolution of the global estimators $\eta_{\rm tot}$, $\eta_{\rm str}$, $\eta_{\rm cnt}$ and $\eta_{\rm frc}$ using Algorithm \ref{algorithm} with respect to the number of degrees of freedom ({\it top}), and with respect to the number of Newton iterations for the 2\textsuperscript{nd} and 10\textsuperscript{th} adaptively refined mesh for the Tresca test case of Section~\ref{sec:numerical results:tresca rectangular}. The circle indicates the Newton iteration at which the convergence criterion has been reached.
  }
\end{figure}

Finally, Table~\ref{tab:Newton steps} shows the number of Newton iterations \eqref{eq:linearized problem} required to satisfy the stopping criterion of Line~\ref{alg:global stopping criterion} of the fully adaptive Algorithm~\ref{algorithm} with $\gamma_{\rm lin} = 0.01$.
Figure~\ref{fig:global estimators} illustrates the evolution of the global estimators $\eta_{\rm tot}$, $\eta_{\rm str}$, $\eta_{\rm cnt}$ and $\eta_{\rm frc}$ as functions of the number of degrees of freedom. Additionally, the same estimators are represented as functions of the number of Newton iterations by Figure~\ref{fig:2nd adaptive estimators} and \ref{fig:9th adaptive estimators} for the 2\textsuperscript{nd} and 10\textsuperscript{th} adaptively refined meshes, respectively.

\subsection{Coulomb friction}\label{sec:numerical results:coulomb rectangular domain}

\begin{figure}[!tb]
  \centering
  \begin{subfigure}{0.49\textwidth}
    \centering
    \includegraphics[scale=0.8]{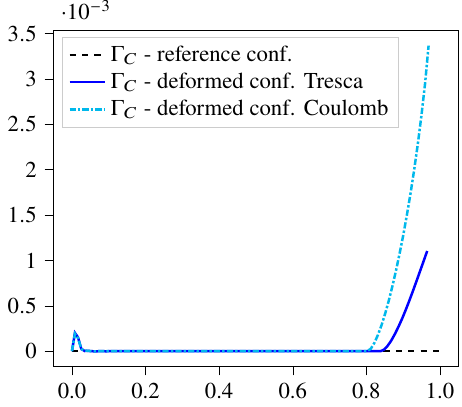}
    \subcaption{Displacement on $\gamC$ in the deformed configuration with Tresca and Coulomb friction.}
    \label{fig:C-displacement}
  \end{subfigure}
  \hfill
  \begin{subfigure}{0.49\textwidth}
    \centering
    \includegraphics[width=0.95\textwidth]{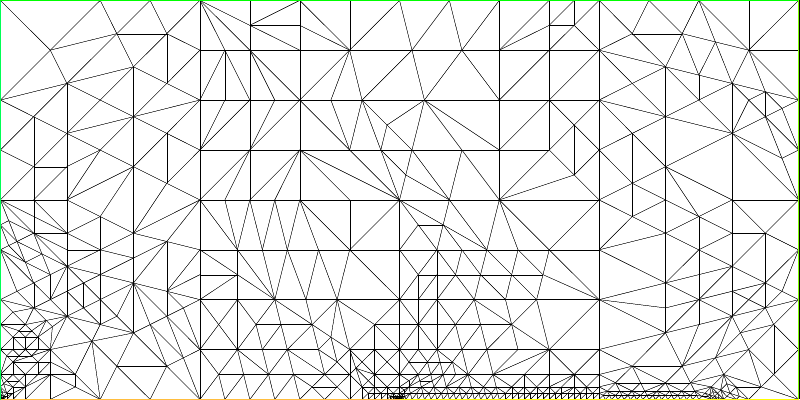}
    \vspace{1cm}
    \caption{Adaptevily refined mesh after 10 remeshing steps.}
    \label{fig:C-10th adaptive mesh}
  \end{subfigure}
  \caption{Contact region and adaptively refined mesh for the Coulomb test case of Section~\ref{sec:numerical results:coulomb rectangular domain}.}
\end{figure}

In this section, we consider again the configuration depicted in Figure~\ref{fig:domain illustration}, using the same parameters as in the previous numerical example but, this time, Coulomb friction conditions are enforced on $\gamC$:

\begin{equation}\label{eq:Coulomb friction conditions}
    \begin{aligned}
        \left[\VEC{P}^{\VEC{t}}_{1,\gamma}(\VEC{u}_h)\right]_{S_h(\VEC{u}_h)} 
        =& \begin{cases}
            \VEC{0} & \quad {\rm if}\ P^n_{1,\gamma}(\VEC{u}_h) > 0, \medskip\\
            \VEC{P}^{\VEC{t}}_{1,\gamma}(\VEC{u}_h) & \quad {\rm if}\ \bigl\lvert \VEC{P}^{\VEC{t}}_{1,\gamma}(\VEC{u}_h)\bigr\rvert \le -\fric\, P^n_{1,\gamma}(\VEC{u}_h), \medskip\\
            -\fric\, P^n_{1,\gamma}(\VEC{u}_h) \displaystyle\frac{\VEC{P}^{\VEC{t}}_{1,\gamma}(\VEC{u}_h)}{\bigl\lvert\VEC{P}^{\VEC{t}}_{1,\gamma}(\VEC{u}_h)\bigr\rvert} & \quad \rm otherwise,
        \end{cases}
    \end{aligned}
\end{equation}
with the friction parameter $\fric = 0.5$.
In Figure~\ref{fig:C-10th adaptive mesh}, we compare the profiles of the contact boundary $\gamC$ in the deformed configuration ({\it light blue}) with the one in the reference configuration ({\it black}) and with the one in the deformed configuration with the Tresca boundary conditions with $s = 5\cdot 10^{-3}$ as in the previous example ({\it blue}). With the selected choice of friction parameters, the opening is more significant in the Coulomb friction case.
Starting from the coarse mesh of Figure~\ref{fig:initial mesh}, we apply the same adaptive approach as in the previous example, and, after 10 remeshing steps, we obtain the mesh of Figure~\ref{fig:C-10th adaptive mesh}. Once again, the refinement concentrates near the endpoints of $\gamD$ and on the contact boundary $\gamC$, particularly along the actual contact interval $I_C \approx (0.035,0.802)$. 

\begin{figure}[!tb]
  \centering
  \begin{subfigure}{0.49\textwidth}
    \centering
    \includegraphics[scale=0.73]{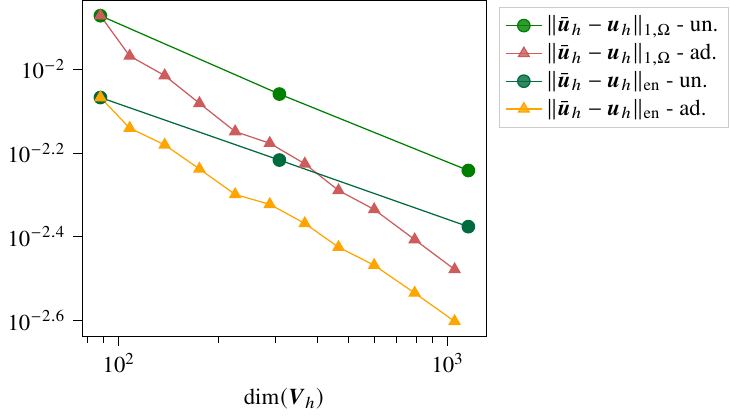}
    \subcaption{$H^1$-norm $\normHuno{\bar{\VEC{u}}-\VEC{u}_h}$ and energy norm $\energynorm{\bar{\VEC{u}}-\VEC{u}_h}$.}
  \end{subfigure}
  \vspace{6mm}
  \hfill \\
  \begin{subfigure}{0.49\textwidth}
    \centering
    \includegraphics[scale=0.73]{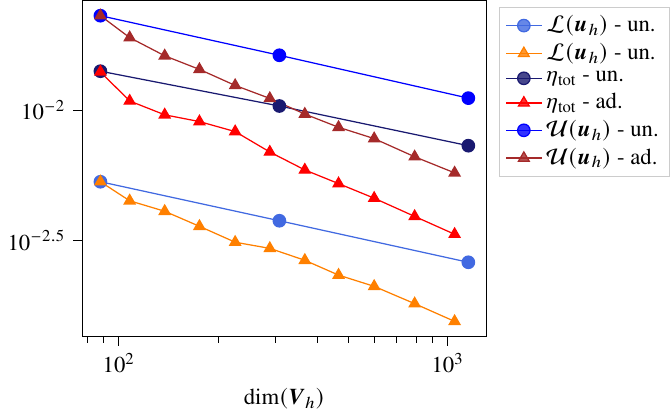}
    \subcaption{Global total estimator $\eta_{\rm tot}$, lower bound $\mathcal{L}(\VEC{u}_h)$, and upper bound $\mathcal{U}(\VEC{u}_h$).}
  \end{subfigure}
  \hfill
  \begin{subfigure}{0.49\textwidth}
    \centering
    \includegraphics[scale=0.73]{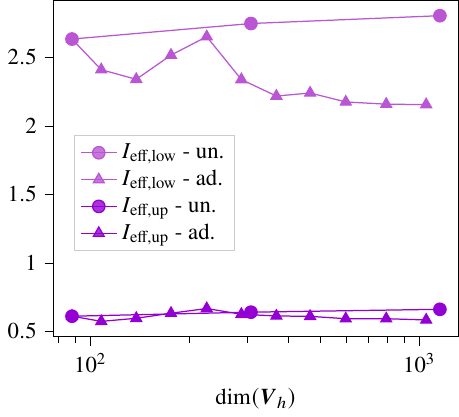}
    \subcaption{Effectivity indices of the lower bound $I_{\rm eff, low}$ and the upper bound $I_{\rm eff, up}$.}
  \end{subfigure}
  \caption{Comparison between uniform and adaptive refinement (circles and triangles, respectively) for the Coulomb test case of Section~\ref{sec:numerical results:coulomb rectangular domain}.}
  \label{fig:C-uniform vs adaptive}
\end{figure}

\begin{figure}[!tb]
  \centering
  \begin{subfigure}{0.49\textwidth}
    \centering
    \includegraphics[scale=0.8]{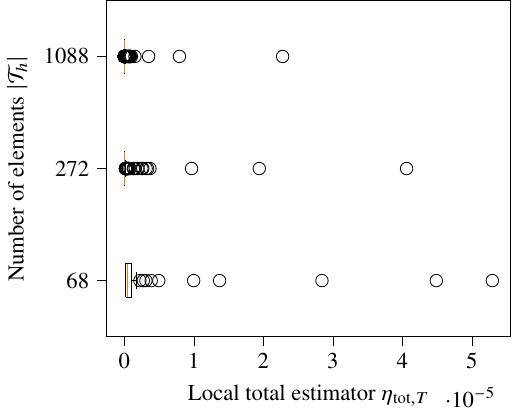}
  \end{subfigure}
  \hfill
  \begin{subfigure}{0.49\textwidth}
    \centering
    \includegraphics[scale=0.8]{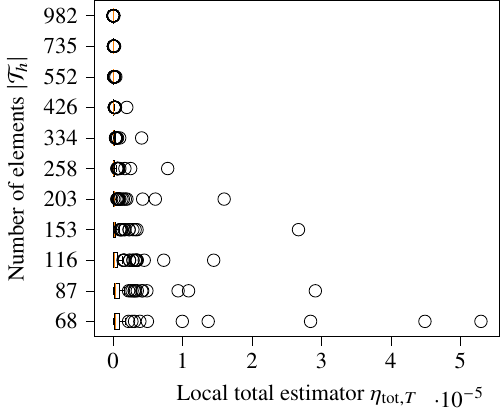}
  \end{subfigure}
  \caption{Distribution of the local total estimator $\eta_{\rm tot,T}$ for each spatial step with uniform ({\it left}) and adaptive ({\it right}) refinement approach for the Coulomb test case of Section~\ref{sec:numerical results:coulomb rectangular domain}.}
  \label{fig:C-distribution estimator}
\end{figure}

We then compare the results obtained with uniform and adaptive approaches: Figure~\ref{fig:C-uniform vs adaptive} showcases the convergence of the $H^1$-norm $\normHuno{\bar{\VEC{u}}-\VEC{u}_h}$ and energy norm $\energynorm{\bar{\VEC{u}}-\VEC{u}_h}$, along with the comparison of the global total estimator $\eta_{\rm tot}$ with the bounds $\mathcal{L}(\VEC{u}_h)$ and $\mathcal{U}(\VEC{u}_h)$ defined by \eqref{eq:lower bound} and \eqref{eq:upper bound}, as well as the corresponding effectivity indices \eqref{eq:effectivity indices}. Additionally, Figure~\ref{fig:C-distribution estimator} represents the distribution of the total local estimators $\{\eta_{{\rm tot},T}\}_{T\in\mathcal{T}_h}$. Notably, the results mirror those observed in the previous example. Specifically, in this case the asymptotic rates of convergence of $H^1$-norm and energy norm are approximately 0.317 and 0.277 for the uniform case, 0.496 and 0.513 for the adaptive one.

\subsection{A test case from literature}\label{eq:numerical results:literature}

\begin{figure}[!tb]
    \centering
    \begin{subfigure}{0.49\textwidth}
        \centering
        \includegraphics{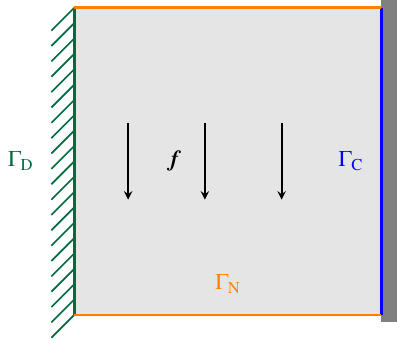}
        \subcaption{Configuration schematic representation.}
        \label{fig:S1 domain}
    \end{subfigure}
    \hfill
    \begin{subfigure}{0.49\textwidth}
        \centering
        \includegraphics[width=0.8\textwidth]{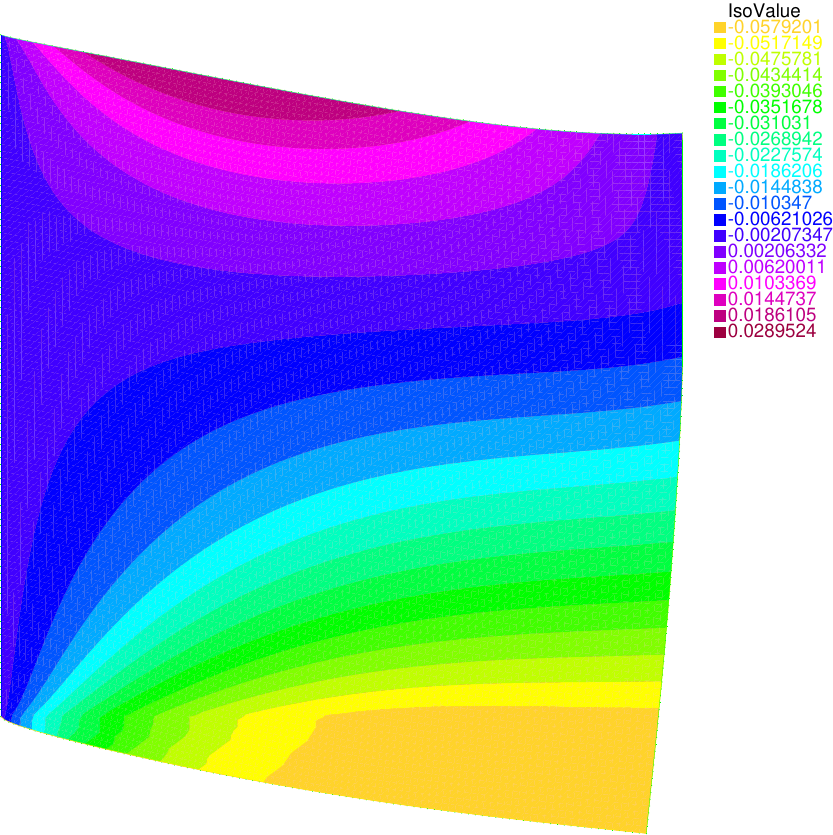}
        \subcaption{Horizontal displacement in the deformed configuration.}
        \label{fig:S1 horizontal displacement}
    \end{subfigure}
    \caption{Square domain for the test case of Section~\ref{eq:numerical results:literature} (configuration from \cite{Hild2009}) with representation of internal forces and division of the domain's boundary.
    In particular, homogeneous Neumann conditions are enforced on $\Gamma_{{\rm N}}$.
    The portion of the boundary $\gamD$ is fixed, while contact is possible on $\gamC$.
    }
    \label{fig:S1-domain+u1}
\end{figure}

We conclude this section by considering the setting of the numerical test investigated in \cite{Hild2009}.
We consider an elastic object represented by the square domain $\Omega = (0,1)^2$, with Young modulus $E = 10^6$ and Poisson ratio $\nu = 0.3$. This domain is subject to a vertical force $\VEC{f} = (0, - 76518)$, it is clamped on $\gamD = \{0\}\times (0,1)$, and no force is applied on $\gamN = (0,1) \times \{0\}\cap\{1\}$. On the contact boundary part $\gamC = \{1\}\times (0,1)$, Coulomb boundary conditions \eqref{eq:Coulomb friction conditions} are enforced with the friction parameter $\mu_{\rm Coul} = 0.2$. Additionally, the Nitsche parameter is set to $\gamma_0 = E$ as in~\cite{Chouly-Mlika2017}. Figure~\ref{fig:S1-domain+u1} shows this setting (\emph{left}) together with the horizontal displacement in the deformed configuration (\emph{right}).
For this configuration, as before, the reference solution $\bar{\VEC{u}}_h$ is computed solving the discrete problem \eqref{eq:Nitsche-based_method} using $\mathcal{P}^2$ Lagrange finite elements on a fine mesh with mesh size $h\approx 8.34\cdot 10^{-3}$, while the approximate solution $\VEC{u}_h$ is obtained using $\mathcal{P}^1$ Lagrange finite elements and the adaptive algorithm described in Subsection~\ref{subsec:a posteriori distinguishing}.
Also in this case, on the contact boundary $\gamC$ we observe both slip and separation.

\begin{figure}[!tb]
    \centering
    \begin{subfigure}{0.32\textwidth}
        \centering
        \includegraphics[width=0.95\textwidth]{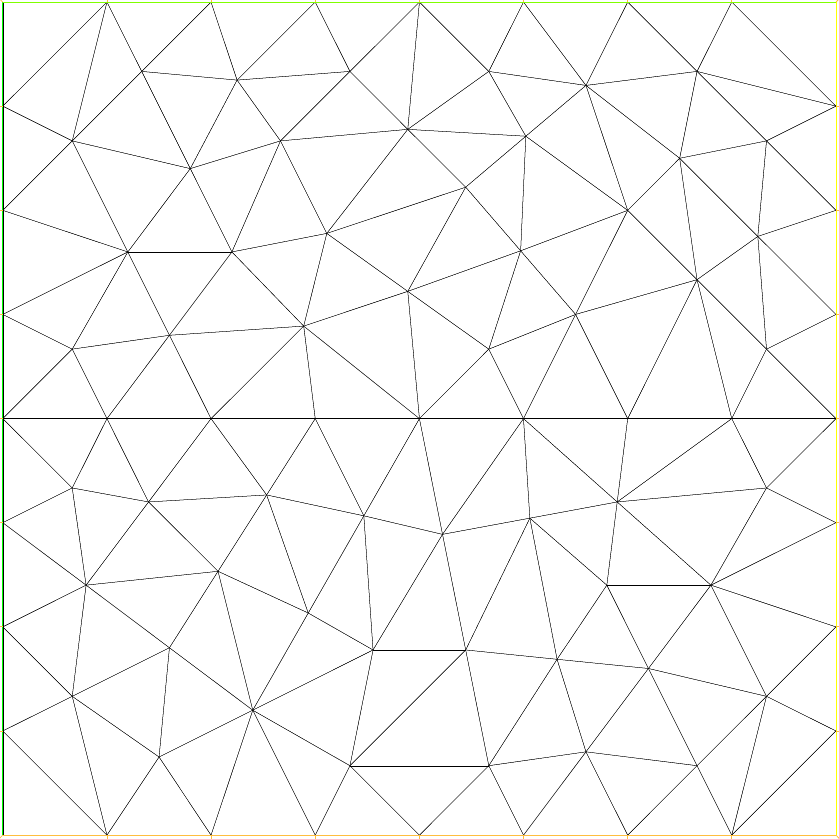}
        \subcaption{Initial mesh}
        \label{fig:S1-initial mesh}
    \end{subfigure}
    \hfill
    \begin{subfigure}{0.32\textwidth}
        \centering
        \includegraphics[width=0.95\textwidth]{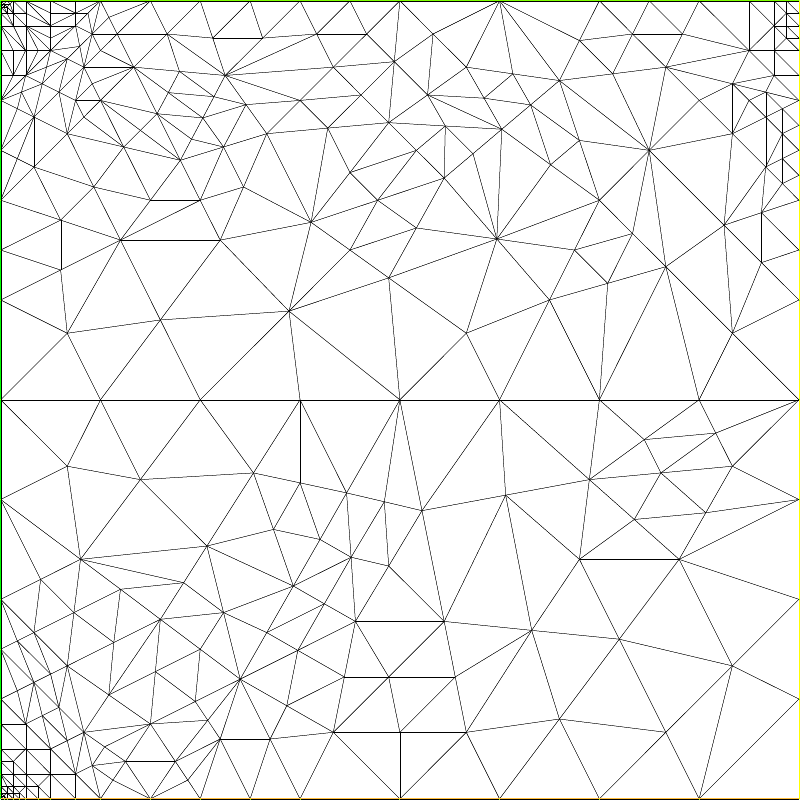}
        \subcaption{$5^{\rm th}$ mesh refinement iteration}
        \label{fig:S1-5th spatial iteration}
    \end{subfigure}
    \hfill
    \begin{subfigure}{0.32\textwidth}
        \centering
        \includegraphics[width=0.95\textwidth]{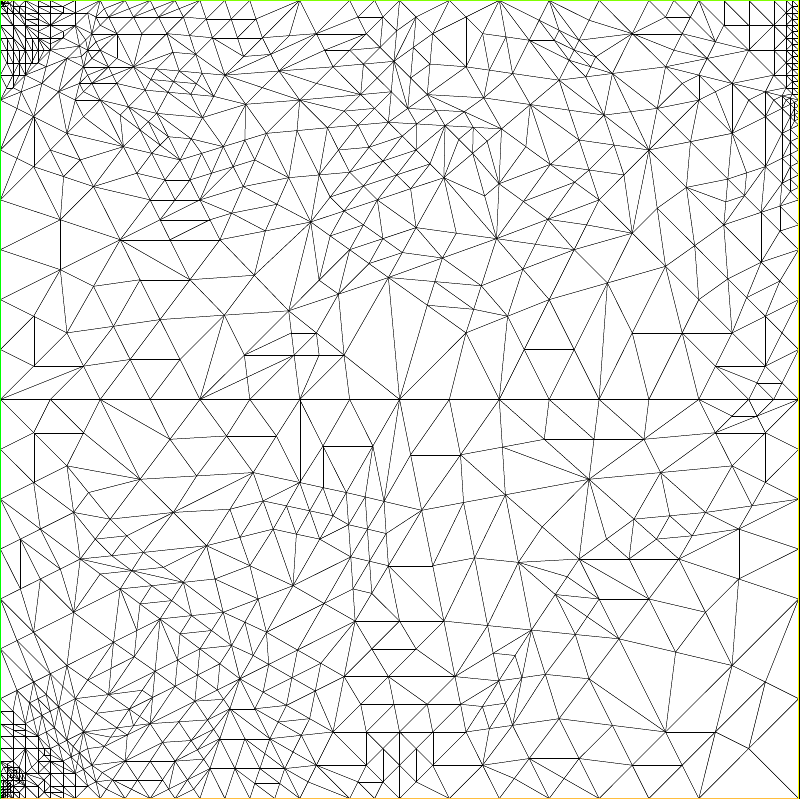}
        \subcaption{$9^{\rm th}$ mesh refinement iteration}
        \label{fig:S1-9th spatial iterations}
    \end{subfigure}
    \caption{Initial mesh and adaptively refined mesh after 5, and 9 remeshing steps, respectively, for the test case of Section \ref{eq:numerical results:literature}.}
    \label{fig:S1-mesh adaptive refinement}
\end{figure}

\begin{figure}[!tb]
	\centering
	\begin{subfigure}{0.49\textwidth}
		\centering
            \includegraphics[scale=0.73]{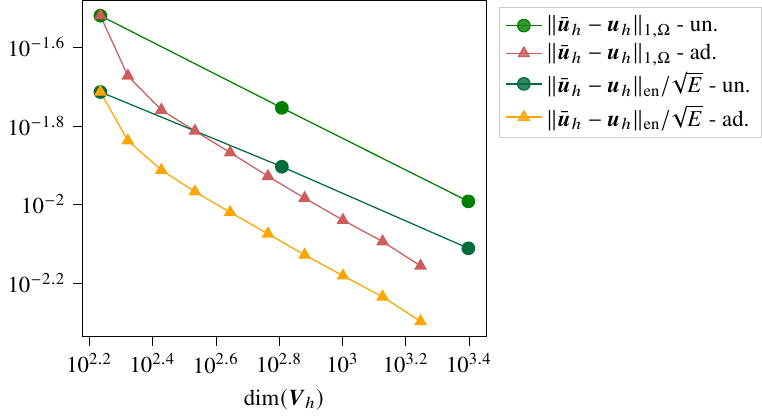}
            \subcaption{$H^1$-norm $\normHuno{\bar{\VEC{u}}-\VEC{u}_h}$ and energy norm $\energynorm{\bar{\VEC{u}}-\VEC{u}_h}/\sqrt{E}$.}
            \label{fig:S1-H1+Energy}
	\end{subfigure}
	\vspace{6mm}
        \hfill \\
	
	\begin{subfigure}{0.49\textwidth}
		\centering
            \includegraphics[scale=0.73]{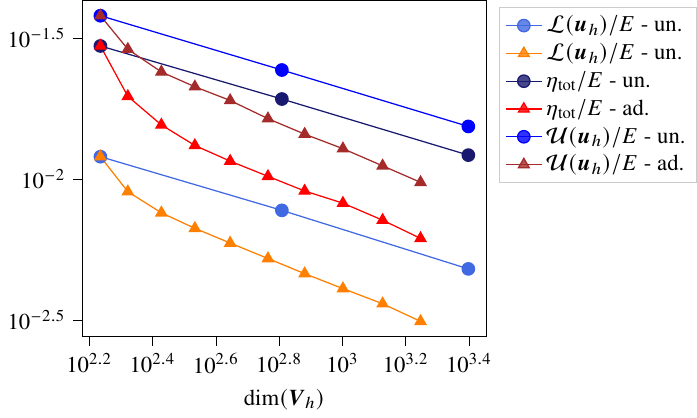}
            \subcaption{Global total estimator $\eta_{\rm tot}$, lower bound $\mathcal{L}(\VEC{u}_h)$, and upper bound $\mathcal{U}(\VEC{u}_h$), divided by $E$.}
            \label{fig:S1-Lower+TE+Upper}
	\end{subfigure}
	\hfill
	\begin{subfigure}{0.47\textwidth}
		\centering
            \includegraphics[scale=0.73]{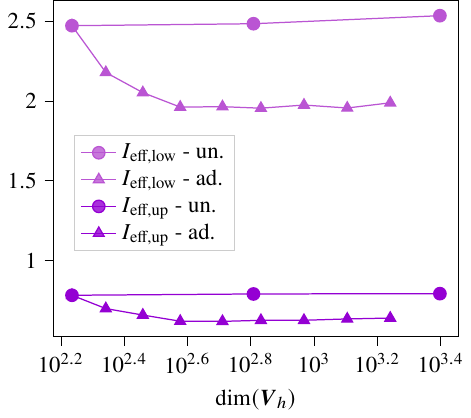}
            \subcaption{Effectivity indices of the lower bound $I_{\rm eff, low}$ and the upper bound $I_{\rm eff, up}$.}
            \label{fig:S1-effectivity}
	\end{subfigure}
        \caption{Comparison between uniform and adaptive refinement (circles and triangles, respectively) for the test case of Section~\ref{eq:numerical results:literature}.}
        \label{fig:S1-uniform vs adaptive}
\end{figure}

We start from the mesh depicted in Figure~\ref{fig:S1-initial mesh} and refine at each spatial iteration at least $6.2\%$ of the elements. After 5 and 9 steps, respectively, we obtain the meshes of Figures~\ref{fig:S1-5th spatial iteration} and \ref{fig:S1-9th spatial iterations}. Notice that the refinement is concentrated around the endpoints of $\gamD$ and the actual contact interval $I_{\rm C}\subset \gamC$. Additionally, Figure~\ref{fig:S1-H1+Energy} shows the evolution of the $H^1$-norm $\normHuno{\bar{\VEC{u}}-\VEC{u}_h}$ and energy norm $\energynorm{\bar{\VEC{u}}-\VEC{u}_h}$, which asymptotic rates are approximatively 0.404 and 0.353 in the uniform case, and 0.516 and 0.522 in the adaptive one. The total estimators $\eta_{\rm tot}$, the lower bound $\mathcal{L}(\VEC{u}_h)$, and the upper bound $\mathcal{U}(\VEC{u}_h)$ are displayed in Figure~\ref{fig:S1-Lower+TE+Upper}, and the corresponding effectivity indices in Figure~\ref{fig:S1-effectivity}. 
Results similar to the ones of Figure~\ref{fig:distribution total estimator} can be obtained for the distribution of the local total estimators.

\section{Efficiency of the estimators}\label{sec:efficiency}

In this section, we prove the efficiency of the estimators introduced in Section~\ref{subsec:a posteriori distinguishing} using the stress reconstruction $\VEC{\sigma}_h$ described in Section~\ref{sec:stress reconstruction}. As before, we will use the notation $a\lesssim b$, $a,b\in\MAT{R}$ when $a\leq C b$ where $C > 0$ is a constant independent of the mesh size $h$ and of the Nitsche parameter $\gamma_0$.
We start presenting the results of local and global efficiency, and then we explain the main idea of their proofs.

To show local efficiency, we work on local patches around elements of the mesh \cite{Verfurth1999}.
For any mesh element $T\in\mathcal{T}_h$, we introduce the patch $\tilde{\omega}_T$ defined as the union of all elements sharing at least one vertex with $T$ and denote by $\mathcal{T}_T$ the corresponding set of elements. Then, we define the {\em local residual operator} $\mathcal{R}_{\mathcal{T}_T}\colon \VEC{V}_h \to (\HunoD{\tilde{\omega}_T})^*$ by:
For all $\VEC{w}_h\in\VEC{V}_h$ and all $\VEC{v}\in\HunoD{\tilde{\omega}_T}$.
\begin{equation*}
  \begin{split}
    \left\langle\mathcal{R}_{\mathcal{T}_T}(\VEC{w}_h), \VEC{v}\right\rangle_{\tilde{\omega}_T} \coloneqq&\ (\VEC{f},\VEC{v})_{\tilde{\omega}_T} + (\gN,\VEC{v})_{\partial\tilde{\omega}_T\cap\gamN} - \bigl(\VEC{\sigma}(\VEC{w}_h),\VEC{\varepsilon}(\VEC{v})\bigr)_{\tilde{\omega}_T}\\
    &+ \left(\left[P_{1,\gamma}^n(\VEC{w}_h)\right]_{\mathbb{R}^-},v^n\right)_{\partial\tilde{\omega}_T\cap \gamC} + \left(\left[\VEC{P}_{1,\gamma}^{\VEC{t}}(\VEC{w}_h)\right]_{S_h(\VEC{w}_h)}, \VEC{v}_h^{\VEC{t}}\right)_{\partial\tilde{\omega}_T\cap \gamC}.
  \end{split}
\end{equation*}
Here, the space $\HunoD{\tilde{\omega}_T}$ is the natural restriction of $\HunoD{\Omega}$ to the patch $\tilde{\omega}_T$, i.e.,
\begin{equation*}
  \HunoD{\tilde{\omega}_T} \coloneqq \left\{\VEC{v}\in \VEC{H}^1(\tilde{\omega}_T)\ :\ \VEC{v}=\VEC{0}\ \text{on}\  \partial\tilde{\omega}_T\cap\gamD \ \text{and on}\ \partial\tilde{\omega}_T\cap\Omega\right\}.
\end{equation*}
Finally, letting
\begin{equation*}
  \norm{\VEC{v}}_{\tilde{\omega}_T}\coloneqq \left(\left\lVert\VEC{\nabla}\VEC{v}\right\rVert_{\tilde{\omega}_T}^2 + \left\lvert\VEC{v}\right\rvert^2_{C,\tilde{\omega}_T} \right)^{\nicefrac{1}{2}} = \biggl(\left\lVert\VEC{\nabla}\VEC{v}\right\rVert_{\tilde{\omega}_T}^2 + \sum_{F\in \faces{\mathcal{T}_T}{C}} \frac{1}{h_F} \lVert\VEC{v}\rVert_F^2 \biggr)^{\nicefrac{1}{2}},
\end{equation*}
with $\faces{\mathcal{T}_T}{C}$ denoting the (possibly empty) set of faces of $\mathcal{T}_T$ that lie on $\gamC$,
we get, for a function $\VEC{w}_h\in\VEC{V}_h$, the local residual norm
\begin{equation}\label{eq:dual norm of the local residual}
  \begin{split}
    \localdualnormresidual{\VEC{w}_h} = \sup_{\substack{\VEC{v}\in \HunoD{\tilde{\omega}_T},\ \norm{\VEC{v}}_{\tilde{\omega}_T} = 1}} \left\langle \mathcal{R}_{\mathcal{T}_T}(\VEC{w}_h), \VEC{v}\right\rangle_{\tilde{\omega}_T} .
  \end{split}
\end{equation}

\begin{theorem}[Local efficiency]\label{th:local efficiency}
    Assume $d=2$.
    Let $\VEC{u}_h^k\in\VEC{V}_h$ be the approximate solution obtained with Algorithm~\ref{algorithm} replacing Line~\ref{alg:global stopping criterion} with the local stopping criterion \eqref{eq:local stopping criterion}, and let $\VEC{\sigma}_h^{k}$ be the stress field resulting from Construction~\ref{construction sigmahk}.
    Then, for every element $T\in\mathcal{T}_h$, it holds
    \begin{equation}\label{eq:thesis local efficiency1}
        \eta_{\rm str, T}^k \lesssim \localdualnormresidual{\VEC{u}_h^k}
        + \eta_{\emph{osc},\mathcal{T}_T}^k + \eta_{\emph{Neu},\mathcal{T}_T}^k 
        + \eta_{\emph{cnt},\mathcal{T}_T}^k + \eta_{\emph{frc},\mathcal{T}_T}^k,
    \end{equation}
    and, as a consequence,
    \begin{multline}\label{eq:thesis local efficiency2}
    \eta_{\emph{osc},T}^k + \eta_{\emph{str},T}^k + \eta_{\emph{Neu},T}^k + \eta_{\emph{cnt},T}^k + \eta_{\emph{frc},T}^k + \eta_{\emph{lin},T}^k  
    \\ 
    \lesssim
    \localdualnormresidual{\VEC{u}_h^k}
    + \eta_{\emph{osc},\mathcal{T}_T}^k + \eta_{\emph{Neu},\mathcal{T}_T}^k 
    + \eta_{\emph{cnt},\mathcal{T}_T}^k + \eta_{\emph{frc},\mathcal{T}_T}^k,
    \end{multline}
    where
    \begin{equation}
      \eta_{\bullet,\mathcal{T}_T}^k \coloneqq \left[
        \sum_{T'\in\mathcal{T}_T} \left(
        \eta_{\bullet,T'}^k
        \right)^2
        \right]^{\nicefrac{1}{2}} \qquad \text{with}\ \bullet\in\{ \emph{osc}, \emph{Neu}, \emph{cnt}\}.
    \end{equation}
\end{theorem}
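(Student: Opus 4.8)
The plan is to follow the strategy developed in \cite{DiPietro2022} for the frictionless case and adapt it to account for the tangential contact contribution in the stress reconstruction; as there, I would first prove the bound \eqref{eq:thesis local efficiency1} on the stress estimator $\eta_{\rm str,T}^k$ and then deduce the full bound \eqref{eq:thesis local efficiency2} from it.

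First I would fix $T\in\mathcal{T}_h$ and, for each vertex $\VEC{a}\in\mathcal{V}_T$, exploit the fact that $(\VEC{\sigma}_{h,\rm dis}^{\VEC{a},k},\VEC{r}_{h,\rm dis}^{\VEC{a},k},\VEC{\lambda}_{h,\rm dis}^{\VEC{a},k})$ solves the local mixed problem of Construction~\ref{construction sigmahk} on $\omega_{\VEC{a}}$, whose data are the source tensor $\psi_{\VEC{a}}\VEC{\sigma}(\VEC{u}_h^k)$, the right-hand side $-\psi_{\VEC{a}}\VEC{f}+\VEC{\sigma}(\VEC{u}_h^k)\VEC{\nabla}\psi_{\VEC{a}}-\VEC{y}^{\VEC{a},k}$, and the prescribed normal traces $\VEC{\Pi}_F^p(\psi_{\VEC{a}}\gN)$ on $\partial\omega_{\VEC{a}}\cap\gamN$ and $\VEC{\Pi}_F^p(\psi_{\VEC{a}}\VEC{P}_{\rm dis}(\VEC{u}_h^k))$ on $\partial\omega_{\VEC{a}}\cap\gamC$. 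Using the discrete inf--sup stability of the Arnold--Falk--Winther pair $(\VEC{\Sigma}_h^{\VEC{a}},\VEC{U}_h^{\VEC{a}}\times\VEC{\Lambda}_h^{\VEC{a}})$ \cite{Arnold2007} on the patch, together with a discrete Helmholtz-type decomposition and the compatibility conditions guaranteed by $\VEC{y}^{\VEC{a},k}$, I would bound $\lVert\VEC{\sigma}_{h,\rm dis}^{\VEC{a},k}-\psi_{\VEC{a}}\VEC{\sigma}(\VEC{u}_h^k)\rVert_{\omega_{\VEC{a}}}$ by the supremum over $\VEC{v}\in\HunoD{\omega_{\VEC{a}}}$ with $\lVert\VEC{\nabla}\VEC{v}\rVert_{\omega_{\VEC{a}}}=1$ of the linear functional testing the weak equilibrium of the patch data against $\VEC{v}$, plus the data-oscillation terms introduced by the projections $\VEC{\Pi}_T^{p-1}$ and $\VEC{\Pi}_F^p$ in the right-hand side and boundary data. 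Through the alternative expressions \eqref{eq:rewrite oscillation estimator}--\eqref{eq:rewrite friction estimator} of the estimators I would identify these oscillation terms with $\eta_{\rm osc,\mathcal{T}_T}^k$, $\eta_{\rm Neu,\mathcal{T}_T}^k$, $\eta_{\rm cnt,\mathcal{T}_T}^k$, $\eta_{\rm frc,\mathcal{T}_T}^k$, and, after an integration by parts on $\omega_{\VEC{a}}$ and using that $\psi_{\VEC{a}}$ vanishes on $\partial\omega_{\VEC{a}}\setminus\partial\Omega$ together with the symmetry of $\VEC{\sigma}(\VEC{u}_h^k)$, I would recognise the functional above as $\VEC{v}\mapsto\langle\mathcal{R}_{\mathcal{T}_T}(\VEC{u}_h^k),\psi_{\VEC{a}}\VEC{v}\rangle_{\tilde{\omega}_T}$, with $\psi_{\VEC{a}}\VEC{v}$ extended by zero. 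Since $\norm{\psi_{\VEC{a}}\VEC{v}}_{\tilde{\omega}_T}\lesssim\lVert\VEC{\nabla}\VEC{v}\rVert_{\omega_{\VEC{a}}}$ by mesh regularity and a discrete trace inequality on the faces lying on $\gamC$, this yields $\lVert\VEC{\sigma}_{h,\rm dis}^{\VEC{a},k}-\psi_{\VEC{a}}\VEC{\sigma}(\VEC{u}_h^k)\rVert_{\omega_{\VEC{a}}}\lesssim\localdualnormresidual{\VEC{u}_h^k}+\eta_{\rm osc,\mathcal{T}_T}^k+\eta_{\rm Neu,\mathcal{T}_T}^k+\eta_{\rm cnt,\mathcal{T}_T}^k+\eta_{\rm frc,\mathcal{T}_T}^k$; summing over $\VEC{a}\in\mathcal{V}_T$ and using $\sum_{\VEC{a}\in\mathcal{V}_T}\psi_{\VEC{a}}\equiv1$ on $T$, so that $\VEC{\sigma}_{h,\rm dis}^k|_T=\sum_{\VEC{a}\in\mathcal{V}_T}\VEC{\sigma}_{h,\rm dis}^{\VEC{a},k}|_T$, gives \eqref{eq:thesis local efficiency1}.

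To pass from \eqref{eq:thesis local efficiency1} to \eqref{eq:thesis local efficiency2}, I would note that $\eta_{\rm osc,T}^k$, $\eta_{\rm Neu,T}^k$, $\eta_{\rm cnt,T}^k$ and $\eta_{\rm frc,T}^k$ are trivially dominated by the corresponding patch quantities $\eta_{\bullet,\mathcal{T}_T}^k$, so combining with \eqref{eq:thesis local efficiency1} already controls $\eta_{\rm osc,T}^k+\eta_{\rm str,T}^k+\eta_{\rm Neu,T}^k+\eta_{\rm cnt,T}^k+\eta_{\rm frc,T}^k$. It then remains to absorb $\eta_{\rm lin,T}^k$: since $\VEC{u}_h^k$ has been computed with Algorithm~\ref{algorithm} in which Line~\ref{alg:global stopping criterion} is replaced by the local stopping criterion \eqref{eq:local stopping criterion}, we have $\eta_{\rm lin,T}^k\le\gamma_{\rm lin,T}\bigl(\eta_{\rm osc,T}^k+\eta_{\rm str,T}^k+\eta_{\rm Neu,T}^k+\eta_{\rm cnt,T}^k+\eta_{\rm frc,T}^k\bigr)$ with $\gamma_{\rm lin,T}\in(0,1)$, and the bounds already obtained conclude the proof.

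The hard part is the patch stability estimate used in the second paragraph in the presence of friction: on the patches touching $\gamC$ the normal trace of the reconstructed stress is prescribed as $\VEC{\Pi}_F^p(\psi_{\VEC{a}}\VEC{P}_{\rm dis}(\VEC{u}_h^k))$, whose normal component is constrained by a sign condition and whose tangential component is constrained by the friction threshold $S_h(\VEC{u}_h^k)$. One must exhibit a stable $\MAT{H}(\textbf{div})$-lifting of this boundary datum, verify that the compatibility condition encoded in $\VEC{y}^{\VEC{a},k}$ is exactly what makes the constrained mixed problem well posed, and check that the resulting stability constant remains independent of $h$ and of the Nitsche parameter $\gamma_0$. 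This is where the assumption $d=2$ enters, and it is carried out by splitting $\VEC{P}_{\rm dis}(\VEC{u}_h^k)$ into its normal and tangential parts and adapting the corresponding estimates of \cite{DiPietro2022}.
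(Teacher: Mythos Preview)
Your overall plan---bound $\eta_{\rm str,T}^k$ first and then absorb $\eta_{\rm lin,T}^k$ via the local stopping criterion---matches the paper. However, the core of your argument differs from the paper's and contains a gap.

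The paper proceeds in two separate steps through an auxiliary residual-based estimator $\eta_{\sharp,T}^k$ (containing element residuals, normal-flux jumps, and Neumann/contact/friction boundary residuals). In Lemma~\ref{lem:local stress estimator} it shows $\eta_{\rm str,T}^k\lesssim\eta_{\sharp,T}^k$ by testing the patch reconstruction error against a \emph{nonconforming} space $\VEC{M}_h^{\VEC{a}}$ of Arbogast--Chen type (this is where $d=2$ is used), integrating by parts, and recognising the terms $\mathcal{I}_1,\ldots,\mathcal{I}_5$ as pieces of $\eta_{\sharp,T}^k$. Separately, Lemma~\ref{lem:local sharp estimator} bounds $\eta_{\sharp,T}^k$ by $\localdualnormresidual{\VEC{u}_h^k}$ plus oscillation via the classical Verf\"urth bubble-function inequalities~\eqref{eq:properties Verfurth}.

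Your proposal collapses these two steps into one: you claim that AFW inf--sup stability plus a ``discrete Helmholtz-type decomposition'' yields a bound of $\lVert\VEC{\sigma}_{h,\rm dis}^{\VEC{a},k}-\psi_{\VEC{a}}\VEC{\sigma}(\VEC{u}_h^k)\rVert_{\omega_{\VEC{a}}}$ by a supremum over \emph{conforming} test functions $\VEC{v}\in\HunoD{\omega_{\VEC{a}}}$, which can then be identified with $\langle\mathcal{R}_{\mathcal{T}_T}(\VEC{u}_h^k),\psi_{\VEC{a}}\VEC{v}\rangle$. This is precisely the step that does not go through directly for elasticity with weakly imposed symmetry. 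The AFW stability delivers test functions in a broken or nonconforming space (in the paper, $\VEC{m}_h\in\VEC{M}_h^{\VEC{a}}$ with only moment continuity across interior faces); such $\VEC{m}_h$ are not in $\HunoD{\omega_{\VEC{a}}}$, so $\psi_{\VEC{a}}\VEC{m}_h$ cannot be inserted into the definition of $\localdualnormresidual{\VEC{u}_h^k}$. The passage from the nonconforming duality to the continuous residual dual norm is exactly what the bubble-function argument of Lemma~\ref{lem:local sharp estimator} provides, and you have no substitute for it. Your reference to a Helmholtz decomposition is too vague here: for the AFW element with weak symmetry there is no obvious conforming potential space playing the role that $H^1$ plays in the scalar Raviart--Thomas case.

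A secondary point: you locate the $d=2$ restriction in the lifting of the friction boundary datum, but in fact the friction term is handled exactly like the normal contact term (one extra tangential contribution $\mathcal{I}_5$, treated analogously). The dimensional restriction enters instead through the construction of the nonconforming space $\VEC{M}_h^{\VEC{a}}$ needed for the stability bound \eqref{eq:result of Botti2018}, as the paper remarks explicitly.
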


\begin{theorem}[Global efficiency]\label{th:global efficiency}
    Assume $d=2$.
    Let $\VEC{u}_h^k\in\VEC{V}_h$ the approximate solution obtained with Algorithm~\ref{algorithm}, and let $\VEC{\sigma}_h^{k}$ be the stress field resulting from Construction~\ref{construction sigmahk}.
    Then, it holds
    \begin{equation}\label{eq:thesis global efficiency1}
        \eta_{\rm str}^k \lesssim \dualnormresidual{\VEC{u}_h^k}
        + \eta_{\emph{osc}}^k + \eta_{\emph{Neu}}^k 
        + \eta_{\emph{cnt}}^k + \eta_{\emph{frc}}^k,
    \end{equation}
    and, as a consequence,
    \begin{equation}\label{eq:thesis global efficiency2}
        \eta_{\emph{osc}}^k + \eta_{\emph{str}}^k + \eta_{\emph{Neu}}^k + \eta_{\emph{cnt}}^k + \eta_{\emph{frc}}^k + \eta_{\emph{lin}}^k 
        \lesssim \dualnormresidual{\VEC{u}_h^k}
        + \eta_{\emph{osc}}^k + \eta_{\emph{Neu}}^k 
        + \eta_{\emph{cnt}}^k + \eta_{\emph{frc}}^k.
    \end{equation}
\end{theorem}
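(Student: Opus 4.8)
The plan is to deduce both estimates from the local efficiency bound of Theorem~\ref{th:local efficiency} by summing over the mesh, the passage from local to global quantities being controlled by the finite overlap of the patches $\{\tilde{\omega}_T\}_{T\in\mathcal{T}_h}$, which in two dimensions has cardinality bounded only in terms of the shape-regularity of $\{\mathcal{T}_h\}_h$. For \eqref{eq:thesis global efficiency1} I would first square the local bound \eqref{eq:thesis local efficiency1} — which, concerning only the stress estimator and the reconstruction $\VEC{\sigma}_h^k$ of Construction~\ref{construction sigmahk}, is available for any discrete function irrespective of the stopping criterion in use — sum over $T\in\mathcal{T}_h$, and apply the elementary inequality $\bigl(\sum_{i=1}^m a_i\bigr)^2\le m\sum_{i=1}^m a_i^2$. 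The left-hand side becomes exactly $(\eta_{\rm str}^k)^2$ by \eqref{eq:global estimators distinguishing}; for $\bullet\in\{{\rm osc},{\rm Neu},{\rm cnt},{\rm frc}\}$ the terms $\sum_{T\in\mathcal{T}_h}\bigl(\eta_{\bullet,\mathcal{T}_T}^k\bigr)^2 = \sum_{T\in\mathcal{T}_h}\sum_{T'\in\mathcal{T}_T}\bigl(\eta_{\bullet,T'}^k\bigr)^2$ are bounded by a constant times $(\eta_\bullet^k)^2$ since every $T'$ lies in a uniformly bounded number of patches $\mathcal{T}_T$; what remains is the sum $\sum_{T\in\mathcal{T}_h}\localdualnormresidual{\VEC{u}_h^k}^2$, which has to be controlled by $\dualnormresidual{\VEC{u}_h^k}^2$.

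Establishing the bound $\sum_{T\in\mathcal{T}_h}\localdualnormresidual{\VEC{u}_h^k}^2\lesssim\dualnormresidual{\VEC{u}_h^k}^2$ is the main obstacle. For each $T$, pick $\VEC{v}_T\in\HunoD{\tilde{\omega}_T}$ with $\norm{\VEC{v}_T}_{\tilde{\omega}_T}=1$ realizing, up to an arbitrarily small relative error, the supremum defining $\localdualnormresidual{\VEC{u}_h^k}$ in \eqref{eq:dual norm of the local residual}. Since the functions of $\HunoD{\tilde{\omega}_T}$ vanish on $\partial\tilde{\omega}_T\cap\Omega$, the extension $\tilde{\VEC{v}}_T$ of $\VEC{v}_T$ by zero to $\Omega$ belongs to $\HunoD{\Omega}$; moreover $\VEC{\nabla}\tilde{\VEC{v}}_T$ is supported in $\tilde{\omega}_T$ and the only faces of $\faces{h}{C}$ on which $\tilde{\VEC{v}}_T$ does not vanish are those of $\faces{\mathcal{T}_T}{C}$, so $\norm{\tilde{\VEC{v}}_T}=\norm{\VEC{v}_T}_{\tilde{\omega}_T}=1$ and, comparing the definitions, $\langle\mathcal{R}(\VEC{u}_h^k),\tilde{\VEC{v}}_T\rangle=\langle\mathcal{R}_{\mathcal{T}_T}(\VEC{u}_h^k),\VEC{v}_T\rangle_{\tilde{\omega}_T}$. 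Setting $\VEC{v}\coloneqq\sum_{T\in\mathcal{T}_h}\localdualnormresidual{\VEC{u}_h^k}\,\tilde{\VEC{v}}_T$, the finite overlap of $\{\tilde{\omega}_T\}$ — applied both to the volume gradient and to the contact-face seminorm — gives $\norm{\VEC{v}}^2\lesssim\sum_{T\in\mathcal{T}_h}\localdualnormresidual{\VEC{u}_h^k}^2$, while $\langle\mathcal{R}(\VEC{u}_h^k),\VEC{v}\rangle$ equals, up to the relative error, $\sum_{T\in\mathcal{T}_h}\localdualnormresidual{\VEC{u}_h^k}^2$; the definition \eqref{eq:dual norm definition} then yields $\sum_{T\in\mathcal{T}_h}\localdualnormresidual{\VEC{u}_h^k}^2\lesssim\dualnormresidual{\VEC{u}_h^k}\bigl(\sum_{T\in\mathcal{T}_h}\localdualnormresidual{\VEC{u}_h^k}^2\bigr)^{\nicefrac{1}{2}}$, whence the claim and, inserted above, \eqref{eq:thesis global efficiency1}. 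The genuinely delicate points are the verification that this zero-extension keeps the full triple norm (contact-face seminorm included) and that the overlap constant of $\{\tilde{\omega}_T\}$ — like the constant hidden in Theorem~\ref{th:local efficiency} — is independent of $h$ and of $\gamma_0$, which is precisely where $d=2$ and the shape-regularity enter.

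Finally, \eqref{eq:thesis global efficiency2} follows by bounding $\eta_{\rm lin}^k$ via the stopping criterion of Algorithm~\ref{algorithm}: since $\VEC{u}_h^k$ is the accepted Newton iterate, Line~\ref{alg:global stopping criterion} gives $\eta_{\rm lin}^k\le\gamma_{\rm lin}\bigl(\eta_{\rm osc}^k+\eta_{\rm str}^k+\eta_{\rm Neu}^k+\eta_{\rm cnt}^k+\eta_{\rm frc}^k\bigr)$ with $\gamma_{\rm lin}\in(0,1)$. Substituting \eqref{eq:thesis global efficiency1} for the $\eta_{\rm str}^k$ on the right-hand side yields $\eta_{\rm lin}^k\lesssim\dualnormresidual{\VEC{u}_h^k}+\eta_{\rm osc}^k+\eta_{\rm Neu}^k+\eta_{\rm cnt}^k+\eta_{\rm frc}^k$; adding $\eta_{\rm str}^k$ (once more estimated through \eqref{eq:thesis global efficiency1}) and $\eta_{\rm osc}^k+\eta_{\rm Neu}^k+\eta_{\rm cnt}^k+\eta_{\rm frc}^k$, which already appear on the right of \eqref{eq:thesis global efficiency2}, gives the desired bound.
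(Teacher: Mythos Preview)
Your argument is correct, but it takes a different route from the paper. The paper does \emph{not} sum the full local efficiency bound \eqref{eq:thesis local efficiency1} and then compare $\sum_{T}\localdualnormresidual{\VEC{u}_h^k}^2$ with $\dualnormresidual{\VEC{u}_h^k}^2$; instead, it introduces the global residual-based estimator $\eta_\sharp^k\coloneqq\bigl(\sum_T(\eta_{\sharp,T}^k)^2\bigr)^{\nicefrac12}$ and proves two global lemmas. The first, Lemma~\ref{lem:global sharp estimator}, reruns the bubble-function arguments of Lemma~\ref{lem:local sharp estimator} directly at the global level (with $\mathcal{S}=\mathcal{T}_h$, $\mathcal{E}=\faces{h}{\rm i},\faces{h}{\rm N},\faces{h}{\rm C}$ in \eqref{eq:sup-inequality1}--\eqref{eq:sup-inequality2}), obtaining $\eta_\sharp^k\lesssim\dualnormresidual{\VEC{u}_h^k}+\eta_{\rm osc}^k+\eta_{\rm Neu}^k+\eta_{\rm cnt}^k+\eta_{\rm frc}^k$ without ever mentioning the local dual norms. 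The second, Lemma~\ref{lem:global stress estimator}, is the part you and the paper share: summing the local bound $\eta_{{\rm str},T}^k\lesssim\eta_{\sharp,T}^k$ of Lemma~\ref{lem:local stress estimator} with finite overlap. The two estimates combine into \eqref{eq:thesis global efficiency1}, and your treatment of \eqref{eq:thesis global efficiency2} via the global stopping criterion of Line~\ref{alg:global stopping criterion} coincides with the paper's. What your approach buys is conceptual economy---one invokes the local theorem wholesale instead of reproving half of it globally---at the price of the extra local-to-global duality argument (zero-extension of near-maximizers, weighted sum, finite overlap), which is standard but not written in the paper. The paper's route avoids that step entirely by never introducing the local dual norms on the right-hand side, which makes the dependence on the global residual transparent but forces a repetition of the bubble-function machinery. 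You are also right to observe that \eqref{eq:thesis local efficiency1} does not use the local stopping criterion, so it is legitimately available here even though Algorithm~\ref{algorithm} employs the global one.
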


\subsection{Proof of the local efficiency}

We illustrate the main steps of the proof of Theorem~\ref{th:local efficiency}. To this end, we will need the notion of {\it bubble function} of an element $T\in\mathcal{T}_h$ and of a face $F\in\mathcal{F}_h$ defined starting from the hat function $\psi_{\VEC{a}}$:
\begin{equation*}
    \psi_T \coloneqq \alpha_T \prod_{\VEC{a}\in\mathcal{V}_T} \psi_{\VEC{a}} \in \mathcal{P}^{d+1}(T), \qquad\qquad\qquad \psi_F \coloneqq \alpha_F \prod_{\VEC{a}\in\mathcal{V}_F} \psi_{\VEC{a}} \in \mathcal{P}^{d}(\omega_F),
\end{equation*}
where the constants $\alpha_T$ and $\alpha_F$ are determined by the conditions $\max_{\VEC{x}\in T} \psi_T(\VEC{x}) = 1$ and $\max_{\VEC{x}\in F} \psi_F(\VEC{x}) = 1$. Additionally, we will need the following four properties of these bubble functions (see \cite[Section 3.1]{Verfurth1996}):
\begin{subequations}\label{eq:properties Verfurth}
    \begin{gather}
        \lVert \VEC{v}\rVert_T^2 \lesssim (\psi_T \VEC{v},\VEC{v})_T \leq \lVert \VEC{v}\rVert_T^2, \label{eq:first property Verfurth}\\
        \lVert \psi_T \VEC{v}\rVert_{1,T} \lesssim \frac{1}{h_T} \lVert \VEC{v}\rVert_T, \label{eq:second property Verfurth}\\
        \lVert \VEC{\varphi}\rVert_F^2 \lesssim (\psi_F \VEC{\varphi}, \VEC{\varphi})_F \leq \lVert \VEC{\varphi}\rVert_F^2, \label{eq:third property Verfurth}\\
        \lVert \psi_F\VEC{\varphi}\rVert_{\omega_F}
        + h_F \lVert \psi_F \VEC{\varphi}\rVert_{1,\omega_F} \lesssim h_F^{\nicefrac{1}{2}} \lVert \VEC{\varphi}\rVert_F, \label{eq:fourth property Verfurth}.
    \end{gather}
\end{subequations}
where $T\in\mathcal{T}_h$, $F\in\mathcal{F}_h$, $\VEC{v}$, and $\VEC{\varphi}$ are $d$-valued polynomials of degree at most $r$ defined on $T$ and $\omega_F$, respectively. The hidden constants depend only on the polynomial degree $r$ and on the shape regularity parameter of the mesh.

\begin{remark}[Extension of \eqref{eq:first property Verfurth} and \eqref{eq:third property Verfurth}]
  Following the path of \cite{ElAlaoui2011}, it is possible to show that for any $\mathcal{S}\subseteq \mathcal{T}_h$ and any $\VEC{v}\in \VEC{\mathcal{P}}^r(\mathcal{S})$
  \begin{equation}\label{eq:sup-inequality1}
    \left(\sum_{T\in\mathcal{S}} h_T^2 \lVert\VEC{v}\rVert_T^2\right)^{\nicefrac{1}{2}} \lesssim \sup_{\substack{\VEC{w}\in \VEC{\mathcal{P}}^r(\mathcal{S}),\\ \lVert\VEC{w}\rVert_{\omega_{\mathcal{S}}}=1}} \sum_{T\in\mathcal{S}} (\VEC{v}, h_T\psi_T\VEC{w})_T.
  \end{equation}
  where $\omega_{\mathcal{S}}\coloneqq \bigcup_{T\in \mathcal{S}} T$ and $\lVert\VEC{w}\rVert_{\omega_{\mathcal{S}}} \coloneqq \left(\sum_{T\in\mathcal{S}} \lVert\VEC{w}\rVert_{T}^2\right)^{\nicefrac{1}{2}}$.
  In a similar way, for any $\mathcal{E}\subseteq \mathcal{F}_h$ and any $\VEC{\varphi}\in \VEC{\mathcal{P}}^r(\mathcal{E})$
  \begin{equation}\label{eq:sup-inequality2}
    \left(\sum_{F\in\mathcal{E}} h_F\lVert\VEC{\varphi}\rVert_F^2 \right)^{\nicefrac{1}{2}} \lesssim \sup_{\substack{\VEC{\phi}\in \VEC{\mathcal{P}}^r(\mathcal{E}),\\ \lVert\VEC{\phi}\rVert_{\mathcal{E}}=1}} \sum_{F\in\mathcal{E}} (\VEC{\varphi}, h_F^{\nicefrac{1}{2}}\psi_F\VEC{\phi})_F
  \end{equation}
  where $\lVert\VEC{\phi}\rVert_{\mathcal{E}} \coloneqq \left(\sum_{F\in\mathcal{E}} \lVert\VEC{\phi}\rVert_F^2\right)^{\nicefrac{1}{2}}$.
\end{remark}

Following \cite{Verfurth1999}, for any element $T\in\mathcal{T}_h$ we introduce a local {\it residual based} estimator defined on the local patch $\tilde{\omega}_T$:
\begin{equation}\label{eq:definition eta_sharp}
  \begin{aligned}
    \eta_{\sharp,T}^k
    &\coloneqq
    \Bigg(
    \sum_{T'\in\mathcal{T}_T} h_{T'}^2 \left\lVert\VEC{\rm div}\, \VEC{\sigma}(\VEC{u}_h^k) + \VEC{\Pi}_{T'}^p \VEC{f}\right\rVert_{T'}^2
    \Bigg)^{\nicefrac{1}{2}}
    + \Bigg(
    \sum_{F\in\faces{\mathcal{T}_T}{i}} h_F\left\lVert \llbracket\VEC{\sigma}(\VEC{u}_h^k)\VEC{n}_F\rrbracket\right\rVert_F^2
    \Bigg)^{\nicefrac{1}{2}}
    \\
    &\qquad
    + \Bigg(
    \sum_{F\in\faces{\mathcal{T}_T}{N}} h_F\left\lVert \VEC{\sigma}(\VEC{u}_h^k)\VEC{n} - \VEC{\Pi}_F^{p+1}\gN \right\rVert_F^2
    \Bigg)^{\nicefrac{1}{2}}
    \\
    &\qquad\qquad
    + \Bigg(
    \sum_{F\in\faces{\mathcal{T}_T}{C}} h_F \left\lVert\sigma^n(\VEC{u}_h^k) - {\Pi}_F^{p+1} \left[P_{1,\gamma}^n(\VEC{u}_h^k)\right]_{\mathbb{R}^-} \right\rVert_F^2
    \Bigg)^{\nicefrac{1}{2}}
    \\
    &\qquad\qquad\qquad
    + \Bigg(
    \sum_{F\in\faces{\mathcal{T}_T}{C}} h_F \left\lVert\VEC{\sigma^t}(\VEC{u}_h^k) - \VEC{\Pi}_F^{p+1} \left[\VEC{P}_{1,\gamma}^{\VEC{t}}(\VEC{u}_h^k)\right]_{S_h(\VEC{u}_h^k)} \right\rVert_F^2
    \Bigg)^{\nicefrac{1}{2}}.
  \end{aligned}
\end{equation}

\begin{lemma}[Control of the residual-based estimator $\eta_{\sharp,T}^k$]\label{lem:local sharp estimator}
    Let $\VEC{u}_h^k\in\VEC{V}_h$, let $\VEC{\sigma}_h^k$ be the equilibrated stress defined by Construction \ref{construction sigmahk}, and let $\eta_{\sharp,T}^k$ be the local residual-based estimator defined by \eqref{eq:definition eta_sharp}.
    Then, for any element $T\in\mathcal{T}_h$,
    \begin{equation}\label{eq:upper bound sharp estimator}
      \eta_{\sharp,T}^k \lesssim \localdualnormresidual{\VEC{u}_h^k} + \eta_{\emph{osc},\mathcal{T}_T}^k + \eta_{\emph{Neu},\mathcal{T}_T}^k
      + \eta_{\emph{cnt},\mathcal{T}_T}^k
      + \eta_{\emph{frc},\mathcal{T}_T}^k.
    \end{equation}
\end{lemma}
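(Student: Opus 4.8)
The plan is to follow the classical residual-versus-flux technique of Verf\"urth~\cite{Verfurth1999}, in the variant already exploited in the frictionless case~\cite{DiPietro2022}, and to supplement it with a treatment of the two contact boundary contributions. Observe first that the equilibrated stress $\VEC{\sigma}_h^k$ does not enter the definition~\eqref{eq:definition eta_sharp} of $\eta_{\sharp,T}^k$; it will only be used implicitly through the alternative expressions~\eqref{eq:rewrite oscillation estimator}--\eqref{eq:rewrite friction estimator} of $\eta_{\rm osc}^k$, $\eta_{\rm Neu}^k$, $\eta_{\rm cnt}^k$ and $\eta_{\rm frc}^k$, which identify these quantities with data and projection oscillations and which I shall invoke freely. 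The starting point is an element-wise integration by parts in the local residual~\eqref{eq:dual norm of the local residual}: for $\VEC{v}\in\HunoD{\tilde{\omega}_T}$, the quantity $\langle\mathcal{R}_{\mathcal{T}_T}(\VEC{u}_h^k),\VEC{v}\rangle_{\tilde{\omega}_T}$ reduces to a sum, over $T'\in\mathcal{T}_T$, of the element residuals $(\VEC{f}+\VEC{\rm div}\,\VEC{\sigma}(\VEC{u}_h^k),\VEC{v})_{T'}$, plus the interior-face jump terms $-(\llbracket\VEC{\sigma}(\VEC{u}_h^k)\VEC{n}_F\rrbracket,\VEC{v})_F$ over $F\in\faces{\mathcal{T}_T}{i}$, the Neumann residuals $(\gN-\VEC{\sigma}(\VEC{u}_h^k)\VEC{n},\VEC{v})_F$ over $F\in\faces{\mathcal{T}_T}{N}$, and the contact residuals $([P_{1,\gamma}^n(\VEC{u}_h^k)]_{\mathbb{R}^-}-\sigma^n(\VEC{u}_h^k),v^n)_F+([\VEC{P}_{1,\gamma}^{\VEC{t}}(\VEC{u}_h^k)]_{S_h(\VEC{u}_h^k)}-\VEC{\sigma}^{\VEC{t}}(\VEC{u}_h^k),\VEC{v}^{\VEC{t}})_F$ over $F\in\faces{\mathcal{T}_T}{C}$; the Dirichlet part and the interior part of $\partial\tilde{\omega}_T$ carry no contribution because $\VEC{v}$ vanishes there. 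Each of the five sums appearing in~\eqref{eq:definition eta_sharp} will be bounded by testing $\mathcal{R}_{\mathcal{T}_T}(\VEC{u}_h^k)$ against a suitably localised bubble function.

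For the element residuals I would use the interior bubbles $\psi_{T'}$ — most compactly via the collective inequality~\eqref{eq:sup-inequality1} with $\VEC{v}=\VEC{\rm div}\,\VEC{\sigma}(\VEC{u}_h^k)+\VEC{\Pi}_{T'}^{p}\VEC{f}$ and test function $\sum_{T'\in\mathcal{T}_T}h_{T'}\psi_{T'}\VEC{w}|_{T'}$: since $\psi_{T'}$ vanishes on $\partial T'$, all jump and boundary contributions disappear, \eqref{eq:second property Verfurth} bounds the local triple norm of the test function by a constant, and replacing $\VEC{f}$ by its $L^2$-projection leaves a remainder controlled by $\eta_{\rm osc,\mathcal{T}_T}^k$ through~\eqref{eq:rewrite oscillation estimator} (projections of degree $p$ being at least as accurate as those of degree $p-1$); this yields the bound of the first sum in~\eqref{eq:definition eta_sharp} by $\localdualnormresidual{\VEC{u}_h^k}+\eta_{\rm osc,\mathcal{T}_T}^k$. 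For the interior jumps and for the Neumann residuals I would then test, face by face, against $\psi_F$ times the normal-constant extension of $\llbracket\VEC{\sigma}(\VEC{u}_h^k)\VEC{n}_F\rrbracket$, respectively of $\VEC{\sigma}(\VEC{u}_h^k)\VEC{n}-\VEC{\Pi}_F^{p+1}\gN$; properties~\eqref{eq:third property Verfurth} and~\eqref{eq:fourth property Verfurth}, together with the already-controlled element residuals and — for the Neumann data oscillation — the rewrite~\eqref{eq:rewrite Neumann estimator}, give the bounds of the second and third sums by $\localdualnormresidual{\VEC{u}_h^k}+\eta_{\rm osc,\mathcal{T}_T}^k$ and by $\localdualnormresidual{\VEC{u}_h^k}+\eta_{\rm osc,\mathcal{T}_T}^k+\eta_{\rm Neu,\mathcal{T}_T}^k$, respectively, after summing over the finitely many faces concerned. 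Since $\psi_F$ vanishes on every mesh face other than $F$, all these test functions lie in $\HunoD{\tilde{\omega}_T}$ and have zero trace on $\gamC$, so their local triple norm reduces to the $\VEC{H}^1$-seminorm.

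The new ingredient is the control of the two contact sums. As the face $F\subset\gamC$ is flat, the unit normal $\VEC{n}$ and (for $d=2$) the unit tangent are constant on $F$, so for the normal part I would test $\mathcal{R}_{\mathcal{T}_T}(\VEC{u}_h^k)$ against $\VEC{v}=\psi_F\,\overline{w_F^n}\,\VEC{n}$, with $w_F^n\coloneqq\sigma^n(\VEC{u}_h^k)-\Pi_F^{p+1}[P_{1,\gamma}^n(\VEC{u}_h^k)]_{\mathbb{R}^-}$ extended constantly off $F$; then $\VEC{v}^{\VEC{t}}|_F=\VEC{0}$ and $\psi_F$ kills every face but $F$, so only the normal contact term on $F$ and the element residual on the single adjacent element survive. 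Because $[P_{1,\gamma}^n(\VEC{u}_h^k)]_{\mathbb{R}^-}$ is \emph{not} piecewise polynomial, one writes it as $\Pi_F^{p+1}[P_{1,\gamma}^n(\VEC{u}_h^k)]_{\mathbb{R}^-}$ plus a remainder whose $h_F^{\nicefrac{1}{2}}$-weighted $L^2(F)$-norm is $\eta_{\rm cnt,T'}^k$ up to a constant by~\eqref{eq:rewrite contact estimator}; moreover $\VEC{v}$ no longer vanishes on $\gamC$, so $\norm{\VEC{v}}_{\tilde{\omega}_T}$ now carries the weight $h_F^{-\nicefrac{1}{2}}$ on $F$, which together with~\eqref{eq:third property Verfurth}--\eqref{eq:fourth property Verfurth} and the element-residual bound leads to $h_F^{\nicefrac{1}{2}}\lVert w_F^n\rVert_F\lesssim\localdualnormresidual{\VEC{u}_h^k}+\eta_{\rm osc,\mathcal{T}_T}^k+\eta_{\rm cnt,\mathcal{T}_T}^k$; summing over $F\in\faces{\mathcal{T}_T}{C}$ controls the fourth sum in~\eqref{eq:definition eta_sharp}. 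The tangential part is handled in exactly the same way, testing against $\psi_F$ times the constant unit tangent and using~\eqref{eq:rewrite friction estimator}, the function $[\VEC{P}_{1,\gamma}^{\VEC{t}}(\VEC{u}_h^k)]_{S_h(\VEC{u}_h^k)}$ — even in the Coulomb case, where $S_h$ depends on $\VEC{u}_h^k$ — being again merely an $L^2(F)$ function that we split into its degree-$(p+1)$ projection and a remainder. Combining the five bounds and absorbing the finite overlap of the patches $\tilde{\omega}_T$ into the hidden constant gives~\eqref{eq:upper bound sharp estimator}. The main obstacle is precisely this last part: since the contact ``data'' $[P_{1,\gamma}^n(\VEC{u}_h^k)]_{\mathbb{R}^-}$ and $[\VEC{P}_{1,\gamma}^{\VEC{t}}(\VEC{u}_h^k)]_{S_h(\VEC{u}_h^k)}$ are not polynomials, the bubble technique cannot reach $\lVert\sigma^n(\VEC{u}_h^k)-[P_{1,\gamma}^n(\VEC{u}_h^k)]_{\mathbb{R}^-}\rVert_F$ or its tangential analogue directly; one has to detour through polynomial projections — this is exactly why $\eta_{\sharp,T}^k$ is defined with $\Pi_F^{p+1}$ on the contact faces — and then recognise the projection defect as the contact, respectively friction, estimator through~\eqref{eq:rewrite contact estimator} and~\eqref{eq:rewrite friction estimator}, all while keeping the $h_F^{\pm\nicefrac{1}{2}}$ bookkeeping consistent with the local triple norm $\norm{\cdot}_{\tilde{\omega}_T}$.
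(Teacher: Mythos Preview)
Your proposal is correct and follows essentially the same bubble-function strategy as the paper's proof: both control the five constituents of $\eta_{\sharp,T}^k$ by testing the local residual against element bubbles (via~\eqref{eq:sup-inequality1}) for the volume part and face bubbles for the jump, Neumann, and contact parts, and both handle the non-polynomial contact data by splitting off a degree-$(p{+}1)$ projection and bounding the defect by $\eta_{\rm cnt}^k$ and $\eta_{\rm frc}^k$ through~\eqref{eq:rewrite contact estimator}--\eqref{eq:rewrite friction estimator}. The only cosmetic differences are that the paper packages the face arguments through the collective sup-inequality~\eqref{eq:sup-inequality2} rather than face by face, and tests with a generic $\VEC{\phi}$ rather than selecting the normal and tangential directions of the test function explicitly.
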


\begin{proof}[Proof of Lemma \ref{lem:local sharp estimator}]
    Let us fix an element $T\in\mathcal{T}_h$. 
    We analyze each term on the right-hand side of \eqref{eq:definition eta_sharp} separately. For simplicity, we denote them with $\mathcal{J}_1$, $\mathcal{J}_2$, $\mathcal{J}_3$, $\mathcal{J}_4$, and $\mathcal{J}_5$, respectively. The key idea is to use the above-mentioned inequalities involving the bubble functions: \eqref{eq:sup-inequality1} with $\mathcal{S} = \mathcal{T}_T$ for $\mathcal{J}_1$ and \eqref{eq:sup-inequality2} with $\mathcal{E} = \faces{\mathcal{T}_T}{i}, \faces{\mathcal{T}_T}{N}, \faces{\mathcal{T}_T}{C}$ for $\mathcal{J}_2$, $\mathcal{J}_3$, $\mathcal{J}_4$, and $\mathcal{J}_5$. 

    Since $(\VEC{\nabla}\cdot\VEC{\sigma}(\VEC{u}_h^k))|_{T'} + \VEC{\Pi}_{T'}^p \VEC{f}|_{T'}\in \VEC{\mathcal{P}}^p(T')$ for every $T'\in\mathcal{T}_T$, applying \eqref{eq:sup-inequality1} we get
    \begin{equation}
        \mathcal{J}_1 \lesssim \sup_{\substack{\VEC{w}\in \VEC{\mathcal{P}}^p(\mathcal{T}_T),\\ \lVert\VEC{w}\rVert_{\tilde{\omega}_T} = 1}} \sum_{T'\in\mathcal{T}_T} (\VEC{\rm div}\,  \VEC{\sigma}(\VEC{u}_h^k) + \VEC{\Pi}_{T'}^p\VEC{f}, h_{T'} \psi_{T'} \VEC{w})_{T'}.
    \end{equation}
    Notice that here we simply write $\lVert\VEC{w}\rVert_{\tilde{\omega}_T}$ instead of $\lVert\VEC{w}\rVert_{\omega_{\mathcal{T}_T}}$.
    Fix $\VEC{w}\in \VEC{\mathcal{P}}^p(\mathcal{T}_T)$ with $\lVert\VEC{w}\rVert_{\tilde{\omega}_T}=1$, and define $\VEC{\lambda}|_{T'} \coloneqq h_{T'}\psi_{T'}\VEC{w}|_{T'}$ for every $T'\in\mathcal{T}_T$.
    Notice that $\VEC{\lambda}\in \VEC{\mathcal{P}}^{p+d+1}(\mathcal{T}_T) \cap \VEC{H}^1_D(\tilde{\omega}_T)$. Then, using an integration by parts on each element $T'\in\mathcal{T}_T$, the definition of the residual, and the Cauchy--Schwarz inequality we obtain
    \begin{equation}
        \begin{split}
          &\sum_{T'\in\mathcal{T}_T} (\VEC{\rm div}\, \VEC{\sigma}(\VEC{u}_h^k) + \VEC{\Pi}_{T'}^p\VEC{f}, h_{T'}\psi_{T'}\VEC{w})_{T'}
          \\
          &\quad \lesssim\localdualnormresidual{\VEC{u}_h^k} \norm{\VEC{\lambda}}_{\tilde{\omega}_T} + \left(\sum_{T'\in\mathcal{T}_T} h_{T'}^2 \left\lVert \VEC{f}-\VEC{\Pi}_{T'}^{p-1}\VEC{f}\right\rVert_{T'}^2\right)^{\nicefrac{1}{2}} \left(\sum_{T'\in\mathcal{T}_T} \left\lVert \psi_{T'}\VEC{w}\right\rVert_{T'}^2\right)^{\nicefrac{1}{2}}.
        \end{split}
    \end{equation}
    Here, we have also used the fact that $\lVert\VEC{f} - \VEC{\Pi}_{T'}^p \VEC{f}\rVert_{T'} \leq 2\lVert \VEC{f}- \VEC{\Pi}_{T'}^{p-1} \VEC{f}\rVert_{T'}$ for $p>0$.
    By the definition of $\VEC{\lambda}$, properties \eqref{eq:first property Verfurth} and \eqref{eq:second property Verfurth}, along with the fact that $\lVert\VEC{w}\rVert_{\tilde{\omega}_T} = 1$, it is possible to show that
    \begin{equation*}
        \norm{\VEC{\lambda}}_{\tilde{\omega}_T} \lesssim 1 \qquad\text{and}\qquad \left(\sum_{T'\in\mathcal{T}_T} \left\lVert \psi_{T'}\VEC{w}\right\rVert_{T'}^2\right)^{\nicefrac{1}{2}} \lesssim 1,
    \end{equation*}
    and, combining the above results with \eqref{eq:rewrite oscillation estimator}, we conclude
    \begin{equation}\label{J1final}
        \mathcal{J}_1 \lesssim \localdualnormresidual{\VEC{u}_h^k} + \eta_{\text{osc},\mathcal{T}_T}^k.
    \end{equation}
    
    Now, we analyze for instance the term $\mathcal{J}_5$: $\mathcal{J}_2$, $\mathcal{J}_3$ and $\mathcal{J}_4$ can be treated in a similar way.
    Using the fact that $\VEC{\sigma^t}(\VEC{u}_h^k) - \VEC{\Pi}_F^{p+1} \left[\VEC{P}_{1,\gamma}^{\VEC{t}}(\VEC{u}_h^k)\right]_{S_h(\VEC{u}_h^k)} \in \VEC{\mathcal{P}}^{p+1}(F)$ for every $F\in\faces{\mathcal{T}_T}{C}$ along with \eqref{eq:sup-inequality2}, we have
    \begin{equation}\label{eq:J4}
        \mathcal{J}_4 \lesssim \sup_{\substack{\VEC{\phi}\in \VEC{\mathcal{P}}^{p+1}(\faces{\mathcal{T}_T}{C}),\\ \lVert\VEC{\phi}\rVert_{\faces{\mathcal{T}_T}{C}} = 1}} \sum_{F\in\faces{\mathcal{T}_T}{C}} \left(\VEC{\sigma^t}(\VEC{u}_h^k) - \VEC{\Pi}_F^{p+1} \left[\VEC{P}_{1,\gamma}^{\VEC{t}}(\VEC{u}_h^k)\right]_{S_h(\VEC{u}_h^k)}, h_F^{\nicefrac{1}{2}} \psi_F\VEC{\phi}\right)_F.
    \end{equation}
    Fix $\VEC{\phi}\in \VEC{\mathcal{P}}^{p+1}(\faces{\mathcal{T}_T}{C})$ with $\lVert\VEC{\phi}\rVert_{\faces{\mathcal{T}_T}{C}} = 1$, and define $\VEC{\lambda}\in \VEC{\mathcal{P}}^{p+d+1}(\mathcal{T}_T) \cap \VEC{H}^1_D(\tilde{\omega}_T)$ satisfying $\VEC{\lambda}|_F = h_F^{\nicefrac{1}{2}}\psi_F \VEC{\phi}|_F$ for every $F\in\faces{\mathcal{T}_T}{C}$ and vanishing outside of $\bigcup_{F\in\faces{\mathcal{T}_T}{C}} \omega_F$. Then, using the Cauchy--Schwarz inequality together with \eqref{eq:rewrite oscillation estimator} and \eqref{eq:rewrite friction estimator} we get
    \begin{equation}\label{eq:J4 bis}
        \begin{split}
            &\sum_{F\in\faces{\mathcal{T}_T}{C}} \left(\VEC{\sigma^t}(\VEC{u}_h^k) - \VEC{\Pi}_F^{p+1} \left[\VEC{P}_{1,\gamma}^{\VEC{t}}(\VEC{u}_h^k)\right]_{S_h(\VEC{u}_h^k)}, h_F^{\nicefrac{1}{2}} \psi_F\VEC{\phi}\right)_F  \\
            &\hspace{3cm} = -\,\langle \mathcal{R}_{\mathcal{T}_T}(\VEC{u}_h^k), \VEC{\lambda}\rangle_{\tilde{\omega}_T} + \sum_{T'\in\mathcal{T}_T} (\VEC{\nabla}\cdot\VEC{\sigma} (\VEC{u}_h^k)+\VEC{f},\VEC{\lambda})_{T'} \\
            &\hspace{3cm}\quad + \sum_{F\in\faces{\mathcal{T}_T}{C}} \left( \left[\VEC{P}_{1,\gamma}^{\VEC{t}}(\VEC{u}_h^k)\right]_{S_h(\VEC{u}_h^k)} - \VEC{\Pi}_F^{p+1} \left[\VEC{P}_{1,\gamma}^{\VEC{t}}(\VEC{u}_h^k)\right]_{S_h(\VEC{u}_h^k)},\VEC{\lambda}\right)_F  \\
            &\hspace{3cm} \lesssim \localdualnormresidual{\VEC{u}_h^k} \norm{\VEC{\lambda}}_{\tilde{\omega}_T} + (\mathcal{J}_1 + \eta_{\text{osc},\mathcal{T}_T}^k) \left(\sum_{T'\in\mathcal{T}_T} \frac{1}{h_{T'}^2} \lVert\VEC{\lambda}\rVert_{T'}^2\right)^{\nicefrac{1}{2}} \\
            &\hspace{3cm}\quad + \eta_{\text{frc},\mathcal{T}_T}^k \Biggl(\sum_{F\in\faces{\mathcal{T}_T}{C}} \frac{1}{h_F} \lVert\VEC{\lambda}\rVert^2_F\Biggr)^{\nicefrac{1}{2}}.
        \end{split}
    \end{equation}
    Exploiting the properties \eqref{eq:third property Verfurth} and \eqref{eq:fourth property Verfurth}, it is possible to show that
    \begin{equation*}
        \norm{\VEC{\lambda}}_{\tilde{\omega}_T} \lesssim 1, \qquad 
        \left(\sum_{T'\in\mathcal{T}_T} \frac{1}{h_{T'}^2} \left\lVert \VEC{\lambda}\right\rVert_{T'}^2\right)^{\nicefrac{1}{2}} \lesssim 1 
        \qquad\text{and}\qquad 
        \Biggl(\sum_{F\in\faces{\mathcal{T}_T}{C}} \frac{1}{h_F} \lVert\VEC{\lambda}\rVert^2_F\Biggr)^{\nicefrac{1}{2}} \lesssim 1,
    \end{equation*}
    and, combining \eqref{eq:J4}, \eqref{eq:J4 bis}, and \eqref{J1final}, we conclude taht
    \begin{equation*}
        \mathcal{J}_4 \lesssim \localdualnormresidual{\VEC{u}_h^k} + \eta_{\text{osc},\mathcal{T}_T}^k + \eta_{\text{frc}, \mathcal{T}_T}^k.
    \end{equation*}
    
    Proceeding in a similar way, it is possible to obtain the following bounds:
    \begin{gather*}
        \mathcal{J}_2 \lesssim \localdualnormresidual{\VEC{u}_h^k} + \eta_{\text{osc},\mathcal{T}_T}^k, \\
        \mathcal{J}_3 \lesssim \localdualnormresidual{\VEC{u}_h^k} + \eta_{\text{osc},\mathcal{T}_T}^k + \eta_{\text{Neu}, \mathcal{T}_T}^k, \\
        \mathcal{J}_4 \lesssim \localdualnormresidual{\VEC{u}_h^k} + \eta_{\text{osc},\mathcal{T}_T}^k + \eta_{\text{cnt}, \mathcal{T}_T}^k.
    \end{gather*}
    Combining all the results obtained so far gives \eqref{eq:upper bound sharp estimator}.
\end{proof}

\begin{lemma}[Control of the local stress estimator]\label{lem:local stress estimator}
    Assume $d=2$.
    Let $\VEC{u}_h^k\in\VEC{V}_h$, let $\VEC{\sigma}_h^k$ be the equilibrated stress defined by Construction~\ref{construction sigmahk}, and let $\eta_{\sharp,T}^k$ be the local residual-based estimator defined by \eqref{eq:definition eta_sharp}. Then, for every element $T\in\mathcal{T}_h$,
    \begin{equation}\label{eq:thesis lemma local flux estimator}
        \eta_{\emph{str},T}^k \lesssim \eta_{\sharp,T}^k.
    \end{equation}
\end{lemma}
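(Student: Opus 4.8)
The plan is to combine the minimisation property built into Construction~\ref{construction sigmahk} with the partition of unity, reducing the claim to a local stability bound for the Arnold--Falk--Winther mixed problem on each vertex patch. Since $\VEC{\sigma}_{h,\rm dis}^k=\sum_{\VEC{a}\in\mathcal{V}_h}\VEC{\sigma}_{h,\rm dis}^{\VEC{a},k}$ with $\VEC{\sigma}_{h,\rm dis}^{\VEC{a},k}$ supported in $\omega_{\VEC{a}}$, and since $\sum_{\VEC{a}\in\mathcal{V}_T}\psi_{\VEC{a}}\equiv 1$ on $T$, one has $\VEC{\sigma}_{h,\rm dis}^k-\VEC{\sigma}(\VEC{u}_h^k)=\sum_{\VEC{a}\in\mathcal{V}_T}\bigl(\VEC{\sigma}_{h,\rm dis}^{\VEC{a},k}-\psi_{\VEC{a}}\VEC{\sigma}(\VEC{u}_h^k)\bigr)$ on $T$, whence
\[
  \eta_{\rm str,T}^k \le \sum_{\VEC{a}\in\mathcal{V}_T}\bigl\lVert\VEC{\sigma}_{h,\rm dis}^{\VEC{a},k}-\psi_{\VEC{a}}\VEC{\sigma}(\VEC{u}_h^k)\bigr\rVert_{\omega_{\VEC{a}}}.
\]
As each $\omega_{\VEC{a}}$ with $\VEC{a}\in\mathcal{V}_T$ is contained in $\tilde{\omega}_T$, all the element- and face-quantities produced below are controlled by $\eta_{\sharp,T}^k$, so it suffices to estimate one summand.

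Fix $\VEC{a}\in\mathcal{V}_T$. Reading the saddle-point system of Construction~\ref{construction sigmahk} (with $\VEC{r}_{h,\rm dis}^{\VEC{a},k}$ and $\VEC{\lambda}_{h,\rm dis}^{\VEC{a},k}$ as Lagrange multipliers for the divergence and the weak-symmetry constraints) shows that $\VEC{\sigma}_{h,\rm dis}^{\VEC{a},k}$ is the $\MAT{L}^2(\omega_{\VEC{a}})$-orthogonal projection of the symmetric field $\psi_{\VEC{a}}\VEC{\sigma}(\VEC{u}_h^k)$ onto the affine subset of $\VEC{\Sigma}_{h,{\rm N},{\rm C},{\rm dis}}^{\VEC{a}}$ cut out by those constraints; hence $\lVert\VEC{\sigma}_{h,\rm dis}^{\VEC{a},k}-\psi_{\VEC{a}}\VEC{\sigma}(\VEC{u}_h^k)\rVert_{\omega_{\VEC{a}}}$ is the distance of $\psi_{\VEC{a}}\VEC{\sigma}(\VEC{u}_h^k)$ to that affine set. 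This distance I would bound either by dualising it --- using the discrete inf--sup stability of the AFW pair on the patch and the compatibility conditions guaranteed by $\VEC{y}^{\VEC{a},k}$ to rewrite it, up to a shape-regularity constant, as a supremum over $\VEC{v}\in\VEC{H}^1(\omega_{\VEC{a}})$ vanishing on $\partial\omega_{\VEC{a}}\setminus\partial\Omega$ (normalised in $\lVert\VEC{\nabla}\,\cdot\,\rVert_{\omega_{\VEC{a}}}$, with the usual rigid-motion normalisation at interior vertices) of the local residual functional associated with $\psi_{\VEC{a}}\VEC{\sigma}(\VEC{u}_h^k)$ --- or by exhibiting an explicit competitor obtained by correcting $\psi_{\VEC{a}}\VEC{\sigma}(\VEC{u}_h^k)$ through a local, weakly symmetric AFW/Raviart--Thomas lifting of its normal-trace jumps across the interior faces of $\omega_{\VEC{a}}$, of the boundary mismatches on $\partial\omega_{\VEC{a}}\cap(\gamN\cup\gamC)$, and of the divergence mismatch, and invoking the minimisation property. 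In either route, an element-wise integration by parts against $\psi_{\VEC{a}}\VEC{v}$ brings out the interior residual $\VEC{\rm div}\,\VEC{\sigma}(\VEC{u}_h^k)+\VEC{f}$, the jumps $\llbracket\VEC{\sigma}(\VEC{u}_h^k)\VEC{n}_F\rrbracket$, the Neumann mismatch $\VEC{\sigma}(\VEC{u}_h^k)\VEC{n}-\gN$, and the contact mismatches $\sigma^n(\VEC{u}_h^k)-\bigl[P_{1,\gamma}^n(\VEC{u}_h^k)\bigr]_{\mathbb{R}^-}$ and $\VEC{\sigma}^{\VEC{t}}(\VEC{u}_h^k)-\bigl[\VEC{P}_{1,\gamma}^{\VEC{t}}(\VEC{u}_h^k)\bigr]_{S_h(\VEC{u}_h^k)}$, each seen only through its $\MAT{L}^2$-projection onto the relevant discrete (element or face) polynomial space; Cauchy--Schwarz, trace and inverse inequalities applied to $\psi_{\VEC{a}}\VEC{v}$ then supply the weights $h_{T'}$, $h_F^{\nicefrac12}$ and reproduce exactly the five groups of terms defining $\eta_{\sharp,T}^k$ in \eqref{eq:definition eta_sharp}.

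The main obstacle is the stability input for the local mixed problem: one needs the AFW discrete inf--sup constant on the patch to depend only on the mesh shape-regularity (uniformly in $h$ and in $\gamma_0$) and, more delicately, to handle the weak imposition of symmetry, i.e.\ to produce inside $\VEC{\Sigma}_{h,{\rm N},{\rm C},{\rm dis}}^{\VEC{a}}$ a \emph{weakly symmetric} tensor field with the prescribed divergence and normal trace whose $\MAT{L}^2$-norm is controlled by the residual data. It is precisely the construction securing these ingredients --- carried out in two space dimensions in \cite{DiPietro2022, Arnold2007} --- that forces the hypothesis $d=2$; by contrast, the reduction in the first paragraph and the residual manipulations in the second are dimension-independent, which is consistent with the fact that Lemma~\ref{lem:local sharp estimator} holds without restriction on $d$.
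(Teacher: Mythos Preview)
Your opening reduction via the partition of unity and the reading of Construction~\ref{construction sigmahk} as a constrained $\MAT{L}^2$-minimisation match the paper exactly, and you correctly isolate the one non-routine ingredient (a patch-local stability bound compatible with weak symmetry, available only for $d=2$). Where you diverge is in how that stability is realised. You offer two routes: a dualisation written as a supremum over $\VEC{v}\in\VEC{H}^1(\omega_{\VEC{a}})$, or an explicit weakly symmetric lifting/competitor. The paper takes neither of these literally: it dualises through a \emph{discrete nonconforming} space $\VEC{M}_h^{\VEC{a}}$ built from the Arbogast--Chen elements \cite{Arbogast1995} as in \cite{Botti2018}, establishing
\[
\bigl\lVert\VEC{\sigma}_{h,\rm dis}^{\VEC{a},k}-\psi_{\VEC{a}}\VEC{\sigma}(\VEC{u}_h^k)\bigr\rVert_{\omega_{\VEC{a}}}
\;\lesssim\;
\sup_{\substack{\VEC{m}_h\in\VEC{M}_h^{\VEC{a}}\\ \lVert\VEC{\nabla}_h\VEC{m}_h\rVert_{\omega_{\VEC{a}}}=1}}
\bigl(\VEC{\sigma}_{h,\rm dis}^{\VEC{a},k}-\psi_{\VEC{a}}\VEC{\sigma}(\VEC{u}_h^k),\VEC{\nabla}_h\VEC{m}_h\bigr)_{\omega_{\VEC{a}}},
\]
and then integrates by parts element-by-element against $\VEC{m}_h$. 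This choice is not cosmetic: functions in $\VEC{M}_h^{\VEC{a}}$ are piecewise polynomial and satisfy the moment conditions $(\llbracket\VEC{m}_h\rrbracket,\VEC{v}_h)_F=0$ for all $\VEC{v}_h\in\VEC{\mathcal{P}}^p(F)$, which is precisely what makes your assertion ``each seen only through its $\MAT{L}^2$-projection onto the relevant discrete polynomial space'' true. With a test function $\psi_{\VEC{a}}\VEC{v}$ for generic $\VEC{v}\in\VEC{H}^1(\omega_{\VEC{a}})$ that step does not follow, so your first route as written has a gap at exactly the point you gloss over. Your second route (explicit competitor via a weakly symmetric lifting) is a legitimate alternative not pursued in the paper; it would buy a more constructive argument, at the price of building the lifting by hand under the weak-symmetry constraint---which, as you note, is the step that confines everything to $d=2$.
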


\begin{proof}
    Following the path of \cite{Botti2018}, for any element $T\in\mathcal{T}_h$ we introduce the following local nonconforming space \cite{Arbogast1995}:
    \begin{equation*}
        \VEC{M}_T \coloneq  \begin{cases}
            \{\VEC{m}\in \VEC{\mathcal{P}}^{p+2}(T)\ :\ \VEC{m}|_F\in \VEC{\mathcal{P}}^{p+1}(F) \ \text{for any}\ F\in\mathcal{F}_T\} & \quad \text{if}\ p\ \text{is even},\\
            \{\VEC{m}\in \VEC{\mathcal{P}}^{p+2}(T) \ :\ \VEC{m}|_F\in \VEC{\mathcal{P}}^p(F) \oplus \tilde{\VEC{\mathcal{P}}}^{p+2}(F) \ \text{for any}\ F\in\mathcal{F}_T\} & \quad \text{if}\ p\ \text{is odd},
        \end{cases}
    \end{equation*}
    where $\tilde{\VEC{\mathcal{P}}}^{p+2}(F)$ is the $L^2(F)$-orthogonal complement of $\VEC{\mathcal{P}}^{p+1}(F)$ in $\VEC{\mathcal{P}}^{p+2}(F)$.
    Then, for any vertex $\VEC{a}$, on the patch $\omega_{\VEC{a}}$ we define the spaces
    \begin{equation*}
        \begin{split}
            \VEC{M}_h(\omega_{\VEC{a}}) &\coloneq \bigl\{\VEC{m}_h\in \VEC{L}^2(\omega_{\VEC{a}})\ :\ \VEC{m}_h|_T\in\VEC{M}_T\ \text{for any}\ T\in\mathcal{T}_{\VEC{a}}, \\
            &\hspace{1.5cm} (\llbracket\VEC{m}_h\rrbracket,\VEC{v}_h)_F = 0 \ \text{for any}\ \VEC{v}_h\in \VEC{\mathcal{P}}^p(F)\ \text{and for any}\ F\in\mathcal{F}_{\VEC{a}}\setminus \mathcal{F}_h^b \bigr\},
        \end{split}
    \end{equation*}
    and
    \begin{equation*}
        \VEC{M}_h^{\VEC{a}} \coloneq \begin{cases}
            \left\{\VEC{m}_h\in\VEC{M}_h(\omega_{\VEC{a}}) \ :\ (\VEC{m}_h,\VEC{z})_{\omega_{\VEC{a}}} = 0\ \text{for any}\ \VEC{z}\in\VEC{RM}^2 \right\} & \quad \text{if}\ \VEC{a}\in\vertices{h}{i}\ \text{or}\ \VEC{a}\in\vertices{h}{b} \setminus \vertices{h}{D}, \smallskip\\
            \bigl\{\VEC{m}_h\in\VEC{M}_h(\omega_{\VEC{a}})\ :\ (\VEC{m}_h,\VEC{v})_F=0\  & \\
            \hspace{8mm}\text{for any}\ \VEC{v}\in \VEC{\mathcal{P}}^p(F)\ \text{and for any}\ F\in \mathcal{F}_{\VEC{a}}\cap\mathcal{F}_h^D\bigr\} & \quad \text{if}\ \VEC{a}\in\vertices{h}{D}.
        \end{cases}
    \end{equation*}

    Now, fix $T\in\mathcal{T}_h$.
    Combining the definition of the local stress estimator \eqref{eq:stress estimator uhk}, of $\VEC{\sigma}_{h, {\rm dis}}^k$ given by Construction~\ref{construction sigmahk}, of the hat function $\VEC{\psi}_{\VEC{a}}$ with the triangle inequality, we directly get
    \begin{equation}\label{eq:eff stress estimator}
        \eta_{\text{str},T}^k = \left\lVert \VEC{\sigma}_{h, {\rm dis}}^k - \VEC{\sigma}(\VEC{u}_h^k)\right\rVert_T 
        \leq \sum_{\VEC{a}\in\mathcal{V}_T} \left\lVert \VEC{\sigma}_{h, {\rm dis}}^{\VEC{a},k} - \psi_{\VEC{a}}\VEC{\sigma}(\VEC{u}_h^k) \right\rVert_{\omega_{\VEC{a}}}.
    \end{equation}
    Adapting the argument of \cite[Section 4.4]{Botti2018} to our problem with Neumann and frictional contact boundary conditions, it is possible to show that, for any $\VEC{a}\in\mathcal{V}_h$,
    \begin{equation}\label{eq:result of Botti2018}
        \left\lVert \VEC{\sigma}_{h, {\rm dis}}^{\VEC{a},k} - \psi_{\VEC{a}} \VEC{\sigma}(\VEC{u}_h^k) \right\rVert_{\omega_{\VEC{a}}} \lesssim \sup_{\substack{\VEC{m}_h\in\VEC{M}_h^{\VEC{a}}, \\ \lVert\VEC{\nabla}_h \VEC{m}_h\rVert_{\omega_{\VEC{a}}}=1}} \bigl(\VEC{\sigma}_{h, {\rm dis}}^{\VEC{a},k}-\psi_{\VEC{a}}\VEC{\sigma}(\VEC{u}_h^k),\VEC{\nabla}_h\VEC{m}_h \bigr)_{\omega_{\VEC{a}}}
    \end{equation}
    with $\VEC{\nabla}_h$ denoting the standard broken gradient.
    Then, 
    applying an integration by parts, using the properties of $\VEC{M}_h^{\VEC{a}}$, and the fact that, by definition, $\VEC{\sigma}_{h, {\rm dis}}^{\VEC{a},k}\in\VEC{\Sigma}_{h, {\rm N}, {\rm C}, {\rm dis}}^{\VEC{a},k}$, we obtain
    
    \begin{equation*}
        \begin{split}
            (\VEC{\sigma}_{h,{\rm dis}}^{\VEC{a},k} - \psi_{\VEC{a}}\VEC{\sigma}(\VEC{u}_h^k), \VEC{\nabla}_h\VEC{m}_h)_{\omega_{\VEC{a}}} &= \sum_{T'\in\mathcal{T}_{\VEC{a}}} (\VEC{\sigma}_{h,{\rm dis}}^{\VEC{a},k}-\psi_{\VEC{a}}\VEC{\sigma}(\VEC{u}_h^k),\VEC{\nabla}_h\VEC{m}_h)_{T'}  \\
            &\hspace{-2cm}= \underbrace{-\sum_{T'\in\mathcal{T}_{\VEC{a}}} \bigl(\VEC{\rm div}\, (\VEC{\sigma}_{h,{\rm dis}}^{\VEC{a},k} - \psi_{\VEC{a}}\VEC{\sigma}(\VEC{u}_h^k)), \VEC{m}_h\bigr)_{T'}}_{=:\mathcal{I}_1} + \underbrace{\sum_{F\in\faces{\VEC{a}}{i}} \bigl(\llbracket \psi_{\VEC{a}}\VEC{\sigma}(\VEC{u}_h^k)\VEC{n}_F\rrbracket, \VEC{m}_h\bigr)_F}_{=:\mathcal{I}_2} + \\
            &\hspace{-1cm}+ \underbrace{\sum_{F\in\faces{\VEC{a}}{N}} \left(\VEC{\Pi}_F^p \left(\psi_{\VEC{a}}\VEC{g}_N\right) - \psi_{\VEC{a}}\VEC{\sigma}(\VEC{u}_h^k)\VEC{n}, \VEC{m}_h\right)_F}_{=:\mathcal{I}_3} +\\
            &+ \underbrace{\sum_{F\in\faces{\VEC{a}}{C}} \left({\Pi}_F^p \left(\psi_{\VEC{a}} \left[P_{1,\gamma}^n(\VEC{u}_h^k)\right]_{\MAT{R}^-}\right) - \psi_{\VEC{a}}{\sigma}^n(\VEC{u}_h^k), {m}_h^n\right)_F}_{=:\mathcal{I}_4} +\\
          &\hspace{1cm}+ \underbrace{\sum_{F\in\faces{\VEC{a}}{C}} \left(\VEC{\Pi}_F^p \left(\psi_{\VEC{a}} \left[\VEC{P}_{1,\gamma}^{\VEC{t}}(\VEC{u}_h^k)\right]_{S_h(\VEC{u}_h^k)}\right) - \psi_{\VEC{a}}\VEC{\sigma^t}(\VEC{u}_h^k), \VEC{m}_h^{\VEC{t}}\right)_F}_{=:\mathcal{I}_5}.
        \end{split}
    \end{equation*}
    The first two terms can be treated as in \cite[Proof of Theorem 4.7]{Botti2018}, obtaining
    \begin{gather*}
        \mathcal{I}_1 \lesssim \left[
          \sum_{T'\in\mathcal{T}_{\VEC{a}}} h_{T'}^2 \left\lVert\VEC{\rm div}\, \VEC{\sigma}(\VEC{u}_h^k) + \VEC{\Pi}_{T'}^p \VEC{f}\right\rVert_{T'}^2
          \right]^{\nicefrac{1}{2}} \left\lVert\VEC{\nabla}_h \VEC{m}_h\right\rVert_{\omega_{\VEC{a}}},
        \\
        \mathcal{I}_2 \lesssim \left[
          \sum_{F\in\faces{\VEC{a}}{i}} h_F\left\lVert \llbracket\VEC{\sigma}(\VEC{u}_h^k)\VEC{n}_F\rrbracket \right\rVert_F^2
          \right]^{\nicefrac{1}{2}} \left\lVert\VEC{\nabla}_h \VEC{m}_h\right\rVert_{\omega_{\VEC{a}}}.
    \end{gather*}
    In a similar way, using the Cauchy--Schwarz inequality, the discrete trace inequality $\lVert\VEC{m}_h\rVert_F \lesssim h_F^{-\nicefrac{1}{2}} \lVert\VEC{m}_h\rVert_{T'}$, and the discrete Poincaré inequality \cite{Vohralik2005} when $\VEC{a}\notin \vertices{h}{D}$ and the discrete Friedrichs inequality \cite{Vohralik2005} when $\VEC{a}\in\vertices{h}{D}$, together with the definition of $\VEC{M}_h^{\VEC{a}}$, we have
    \begin{equation*}
        \begin{split}
            \mathcal{I}_5 &= \sum_{F\in\faces{\VEC{a}}{C}} \left(\psi_{\VEC{a}} \left(\left[\VEC{P}_{1,\gamma}^{\VEC{t}}(\VEC{u}_h^k)\right]_{S_h(\VEC{u}_h^k)} - \VEC{\sigma^t}(\VEC{u}_h^k)\right), \VEC{\Pi}_F^p \VEC{m}_h^{\VEC{t}}\right)_F \\
            &= \sum_{F\in\faces{\VEC{a}}{C}} \left(\VEC{\Pi}_F^{p+1} \left[\VEC{P}_{1,\gamma}^{\VEC{t}}(\VEC{u}_h^k)\right]_{S_h(\VEC{u}_h^k)} - \VEC{\sigma^t}(\VEC{u}_h^k), \psi_{\VEC{a}} \VEC{\Pi}_F^p \VEC{m}_h^{\VEC{t}}\right)_F \\
            &\leq \left[\sum_{F\in\faces{\VEC{a}}{C}} h_F 
            \left\lVert \psi_{\VEC{a}} \left(\VEC{\Pi}_F^{p+1} \left[\VEC{P}_{1,\gamma}^{\VEC{t}}(\VEC{u}_h^k)\right]_{S_h(\VEC{u}_h^k)} - \VEC{\sigma^t}(\VEC{u}_h^k)\right) \right\rVert_F^2\right]^{\nicefrac{1}{2}} \left[\sum_{F\in\faces{\VEC{a}}{C}} \frac{1}{h_F} \left\lVert\VEC{m}_h\right\rVert_F^2 \right]^{\nicefrac{1}{2}} \\
            &\lesssim \left[\sum_{F\in\faces{\VEC{a}}{C}} h_F 
            \left\lVert \VEC{\Pi}_F^{p+1} \left[\VEC{P}_{1,\gamma}^{\VEC{t}}(\VEC{u}_h^k)\right]_{S_h(\VEC{u}_h^k)} - \VEC{\sigma^t}(\VEC{u}_h^k) \right\rVert_F^2\right]^{\nicefrac{1}{2}} \left\lVert\VEC{\nabla}_h\VEC{m}_h\right\rVert_{\omega_{\VEC{a}}},
        \end{split}
    \end{equation*}
    and also
    \begin{gather*}
        \mathcal{I}_3 \lesssim\left[\sum_{F\in\faces{\VEC{a}}{N}} h_F \left\lVert \VEC{\Pi}_F^{p+1} \VEC{g}_N - \VEC{\sigma}(\VEC{u}_h^k)\VEC{n}\right\rVert_F^2 \right]^{\nicefrac{1}{2}} \left\lVert\VEC{\nabla}_h\VEC{m}_h\right\rVert_{\omega_{\VEC{a}}} \\
        \mathcal{I}_4 \lesssim \left[\sum_{F\in\mathcal{F}_{\VEC{a}}^C} h_F \left\lVert \VEC{\Pi}_F^{p+1} \Bigl(\left[P_{1,\gamma}(\VEC{u}_h^k)\right]_{\mathbb{R}^-} \VEC{n}\Bigr) - \VEC{\sigma}(\VEC{u}_h^k)\VEC{n}\right\rVert_F^2 \right]^{\nicefrac{1}{2}} \left\lVert\VEC{\nabla}_h\VEC{m}_h\right\rVert_{\omega_{\VEC{a}}}
    \end{gather*}
    Combining all the above results with \eqref{eq:eff stress estimator} and \eqref{eq:result of Botti2018} yields \eqref{eq:thesis lemma local flux estimator}.
\end{proof}

\begin{remark}[Case $d=3$]
    In the case $d=3$, it becomes more challenging to identify a space similar to $\VEC{M}_h^{\VEC{a}}$ with the appropriate features to recover \eqref{eq:result of Botti2018} and do the subsequent analysis. For this reason, in this paper, we present the proof of Lemma~\ref{lem:local stress estimator} specifically for the case $d=2$.
\end{remark}

Finally, \eqref{eq:thesis local efficiency1} follows by combining the results of Lemmas~\ref{lem:local sharp estimator} and \ref{lem:local stress estimator}, and \eqref{eq:thesis local efficiency2} by using also the local stopping criterion \eqref{eq:local stopping criterion}.

\subsection{Proof of the global efficiency}

With the aim to prove \eqref{eq:thesis global efficiency1} and \eqref{eq:thesis global efficiency2} of Theorem~\ref{th:global efficiency} we introduce the global version of the local estimator $\eta_{\sharp,T}^k$:
\begin{equation}\label{eq:definition eta_sharp global}
  \eta_{\sharp}^k \coloneqq \left[
    \sum_{T\in\mathcal{T}_h} \left(\eta_{\sharp,T}^k\right)^2
    \right]^{\nicefrac{1}{2}}.
\end{equation}

\begin{lemma}[Control of the residual-based estimator $\eta_{\sharp}^k$]\label{lem:global sharp estimator}
  Let $\VEC{u}_h^k\in\VEC{V}_h$, let $\VEC{\sigma}_h^k$ be the equilibrated stress defined by Construction~\ref{construction sigmahk}, and let $\eta_{\sharp}^k$ be the global residual-based estimator defined by \eqref{eq:definition eta_sharp global}.
  Then, for any $k\geq 1$
  \begin{equation}\label{thesis-global sharp estimator}
    \eta_{\sharp}^k \lesssim \dualnormresidual{\VEC{u}_h^k} + \eta_{\emph{osc}}^k + \eta_{\emph{Neu}}^k + \eta_{\emph{cnt}}^k + \eta_{\emph{frc}}^k.
  \end{equation}
\end{lemma}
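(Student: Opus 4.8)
The plan is to obtain \eqref{thesis-global sharp estimator} from the local bound \eqref{eq:upper bound sharp estimator} of Lemma~\ref{lem:local sharp estimator} by squaring it, summing over $T\in\mathcal{T}_h$, and controlling separately the two kinds of terms on the right-hand side: the patchwise data and contact estimators $\eta_{\bullet,\mathcal{T}_T}^k$, and the local residual dual norms $\localdualnormresidual{\VEC{u}_h^k}$.

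The patchwise estimators are handled by a finite-overlap argument: by shape regularity, $|\mathcal{T}_{T'}|\lesssim 1$ for every $T'\in\mathcal{T}_h$, and since $T'\in\mathcal{T}_T$ if and only if $T\in\mathcal{T}_{T'}$, each element belongs to only a bounded number of the sets $\mathcal{T}_T$; hence, for $\bullet\in\{{\rm osc},{\rm Neu},{\rm cnt},{\rm frc}\}$, $\sum_{T\in\mathcal{T}_h}(\eta_{\bullet,\mathcal{T}_T}^k)^2 = \sum_{T\in\mathcal{T}_h}\sum_{T'\in\mathcal{T}_T}(\eta_{\bullet,T'}^k)^2 \lesssim \sum_{T'\in\mathcal{T}_h}(\eta_{\bullet,T'}^k)^2 = (\eta_{\bullet}^k)^2$. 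The core of the proof is the bound $\sum_{T\in\mathcal{T}_h}\localdualnormresidual{\VEC{u}_h^k}^2 \lesssim \dualnormresidual{\VEC{u}_h^k}^2$. To prove it, for each $T$ let $\VEC{v}_T\in\HunoD{\tilde{\omega}_T}$ with $\norm{\VEC{v}_T}_{\tilde{\omega}_T}=1$ realize the supremum defining $\localdualnormresidual{\VEC{u}_h^k}$ in \eqref{eq:dual norm of the local residual} (it is attained, being a dual norm on a Hilbert space), set $c_T\coloneqq\localdualnormresidual{\VEC{u}_h^k}$, and extend each $\VEC{v}_T$ by zero outside $\tilde{\omega}_T$; this extension belongs to $\HunoD{\Omega}$ because $\VEC{v}_T$ vanishes on $\partial\tilde{\omega}_T\cap\Omega$. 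Define $\VEC{v}\coloneqq\sum_{T\in\mathcal{T}_h}c_T\VEC{v}_T\in\HunoD{\Omega}$. The finite overlap of the patches $\tilde{\omega}_T$ gives $\norm{\VEC{v}}^2\lesssim\sum_{T\in\mathcal{T}_h}c_T^2\norm{\VEC{v}_T}_{\tilde{\omega}_T}^2=\sum_{T\in\mathcal{T}_h}c_T^2$, where both the broken-gradient part and the $\gamC$-face part of $\norm{\,\cdot\,}$ are estimated in this way. On the other hand, since $\VEC{v}_T$ is supported in $\tilde{\omega}_T$ and vanishes on $\partial\tilde{\omega}_T\cap\Omega$, comparing the definition \eqref{eq:residual definition} of $\mathcal{R}$ with that of $\mathcal{R}_{\mathcal{T}_T}$ shows $\langle\mathcal{R}(\VEC{u}_h^k),\VEC{v}_T\rangle=\langle\mathcal{R}_{\mathcal{T}_T}(\VEC{u}_h^k),\VEC{v}_T\rangle_{\tilde{\omega}_T}$, the volume, Neumann, and contact contributions all localizing to $\tilde{\omega}_T$; hence $\langle\mathcal{R}(\VEC{u}_h^k),\VEC{v}\rangle=\sum_{T\in\mathcal{T}_h}c_T\langle\mathcal{R}_{\mathcal{T}_T}(\VEC{u}_h^k),\VEC{v}_T\rangle_{\tilde{\omega}_T}=\sum_{T\in\mathcal{T}_h}c_T^2$. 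Using the definition \eqref{eq:dual norm definition} of $\dualnormresidual{\,\cdot\,}$, $\sum_{T}c_T^2=\langle\mathcal{R}(\VEC{u}_h^k),\VEC{v}\rangle\le\dualnormresidual{\VEC{u}_h^k}\,\norm{\VEC{v}}\lesssim\dualnormresidual{\VEC{u}_h^k}\bigl(\sum_{T}c_T^2\bigr)^{\nicefrac{1}{2}}$, whence $\sum_{T}c_T^2\lesssim\dualnormresidual{\VEC{u}_h^k}^2$.

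With these two ingredients the conclusion follows: I square \eqref{eq:upper bound sharp estimator}, use $(\sum_{i=1}^m a_i)^2\le m\sum_{i=1}^m a_i^2$, sum over $T\in\mathcal{T}_h$, insert the two bounds above, and take the square root, using $\sqrt{a_1^2+\cdots+a_m^2}\le a_1+\cdots+a_m$ for nonnegative reals, to arrive at \eqref{thesis-global sharp estimator}. I expect the delicate point to be the inequality $\sum_{T}\localdualnormresidual{\VEC{u}_h^k}^2\lesssim\dualnormresidual{\VEC{u}_h^k}^2$, which hinges on three checks: that the zero-extensions of the local maximizers are admissible test functions for the global dual norm; that the global residual acting on such a function reduces exactly to the corresponding local residual, including the Neumann and frictional-contact boundary terms; and that $\norm{\,\cdot\,}$ of the superposition $\sum_T c_T\VEC{v}_T$ is controlled by $\sum_T c_T^2$ thanks to the bounded overlap of the patches $\tilde{\omega}_T$. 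The remaining manipulations are routine.
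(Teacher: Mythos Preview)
Your proof is correct, but it takes a different route from the paper. The paper does \emph{not} sum the local bounds of Lemma~\ref{lem:local sharp estimator}; instead it splits the global estimator $\eta_\sharp^k$ directly into its five global constituents $\mathcal{L}_1,\dots,\mathcal{L}_5$ (volume residual, interior jumps, Neumann, normal contact, tangential contact) and bounds each one by $\dualnormresidual{\VEC{u}_h^k}$ plus the relevant global data/contact estimators, repeating at the global level the bubble-function argument of Lemma~\ref{lem:local sharp estimator} (i.e., using \eqref{eq:sup-inequality1}--\eqref{eq:sup-inequality2} with $\mathcal{S}=\mathcal{T}_h$ and $\mathcal{E}=\faces{h}{i},\faces{h}{N},\faces{h}{C}$).

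Your approach is more modular: you reuse the local lemma verbatim and reduce everything to the single inequality $\sum_{T}\localdualnormresidual{\VEC{u}_h^k}^2\lesssim\dualnormresidual{\VEC{u}_h^k}^2$, which you prove by a clean duality/finite-overlap argument (choose local maximizers, zero-extend, superpose with weights $c_T$, and test the global residual). The three checks you flag---admissibility of the zero extensions in $\HunoD{\Omega}$, exact localization $\langle\mathcal{R}(\VEC{u}_h^k),\VEC{v}_T\rangle=\langle\mathcal{R}_{\mathcal{T}_T}(\VEC{u}_h^k),\VEC{v}_T\rangle_{\tilde\omega_T}$ (including the $\gamN$ and $\gamC$ terms, since those faces lie on $\partial\tilde\omega_T$), and bounded overlap for both the gradient and the $\normGamma{\,\cdot\,}$ parts of $\norm{\,\cdot\,}$---all go through. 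What you gain is that no bubble-function manipulation needs to be redone; what the paper's approach gains is that it is self-contained and does not rely on attainment of the local suprema (though, as you note, attainment holds here because $\norm{\,\cdot\,}_{\tilde\omega_T}$ is a Hilbert norm on $\HunoD{\tilde\omega_T}$, and in any case near-maximizers would suffice).
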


\begin{proof}
  We have
  \begin{equation*}
    \begin{split}
      \eta_{\sharp}^k \lesssim& \underbrace{\left(\sum_{T\in\mathcal{T}_h} h_T^2 \left\lVert\VEC{\rm div}\, \VEC{\sigma}(\VEC{u}_h^k) + \VEC{\Pi}_T^p \VEC{f}\right\rVert_T^2\right)^{\nicefrac{1}{2}}}_{=: \mathcal{L}_1} + \underbrace{\left(\sum_{F\in\faces{h}{i}} h_F\left\lVert \llbracket\VEC{\sigma}(\VEC{u}_h^k)\VEC{n}_F\rrbracket\right\rVert_F^2 \right)^{\nicefrac{1}{2}}}_{=:\mathcal{L}_2} + \\
      & \hspace{2cm}+ \underbrace{\left(\sum_{F\in\faces{h}{N}} h_F\left\lVert \VEC{\sigma}(\VEC{u}_h^k)\VEC{n} - \VEC{\Pi}_F^{p+1}\VEC{g}_N \right\rVert_F^2\right)^{\nicefrac{1}{2}}}_{=:\mathcal{L}_3} + \\
      & \hspace{3cm} + \underbrace{\left(\sum_{F\in\faces{h}{C}} h_F \left\lVert\sigma^n(\VEC{u}_h^k) - \Pi_F^{p+1} \left[P_{1,\gamma}^n(\VEC{u}_h^k)\right]_{\mathbb{R}^-} \right\rVert_F^2\right)^{\nicefrac{1}{2}}}_{=:\mathcal{L}_4} + \\
      & \hspace{4cm} + \underbrace{\left(\sum_{F\in\faces{h}{C}} h_F \left\lVert\VEC{\sigma}^{\VEC{t}} (\VEC{u}_h^k) - \VEC{\Pi}_F^{p+1} \left[\VEC{P}_{1,\gamma}^{\VEC{t}}(\VEC{u}_h^k)\right]_{S_h(\VEC{u}_h^k)} \right\rVert_F^2\right)^{\nicefrac{1}{2}}}_{=:\mathcal{L}_5}
    \end{split}
  \end{equation*}
  Proceeding as in the proof of Lemma~\ref{lem:local sharp estimator}, it is possible to show that
  \begin{gather*}
    \mathcal{L}_1 \lesssim \dualnormresidual{\VEC{u}_h^k} + \eta_{\text{osc}}^k, 
    \qquad\quad 
    \mathcal{L}_2 \lesssim \dualnormresidual{\VEC{u}_h^k} + \eta_{\text{osc}}^k,
    \qquad\quad
    \mathcal{L}_3 \lesssim \dualnormresidual{\VEC{u}_h^k} + \eta_{\text{osc}}^k + \eta_{\text{Neu}}^k, \\
    \mathcal{L}_4 \lesssim \dualnormresidual{\VEC{u}_h^k} + \eta_{\text{osc}}^k + \eta_{\text{cnt}}^k,
    \qquad\quad
    \mathcal{L}_4 \lesssim \dualnormresidual{\VEC{u}_h^k} + \eta_{\text{osc}}^k + \eta_{\text{frc}}^k.\qedhere
  \end{gather*}
\end{proof}

\begin{lemma}[Control of the global stress estimator]\label{lem:global stress estimator}
Let $\VEC{u}_h^k\in\VEC{V}_h$, let $\VEC{\sigma}_h^k$ be the equilibrated stress defined by Construction~\ref{construction sigmahk}, and let $\eta_{\sharp}^k$ be the global residual-based estimator defined by \eqref{eq:definition eta_sharp global}.
    Then, for any $k\geq 1$, 
    \begin{equation*}  
        \eta_{\emph{str}}^k \lesssim \eta_{\sharp}^k.
    \end{equation*}
\end{lemma}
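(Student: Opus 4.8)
The plan is to obtain the global bound simply by aggregating over the mesh elements the local estimate already established in Lemma~\ref{lem:local stress estimator}. By the definition \eqref{eq:global estimators distinguishing} of $\eta_{\rm str}^k$ and the definition \eqref{eq:definition eta_sharp global} of $\eta_{\sharp}^k$, both global quantities are the $\ell^2$-aggregates over $T\in\mathcal{T}_h$ of their respective local counterparts. Since the hidden constant in $\eta_{\rm str,T}^k \lesssim \eta_{\sharp,T}^k$ depends only on the polynomial degree $p$ and on the shape-regularity parameter of the family $\{\mathcal{T}_h\}_h$ — in particular it is uniform in $T$, in $h$, and in the Nitsche parameter $\gamma_0$ — we may square this inequality and sum it over all $T\in\mathcal{T}_h$ to get
\begin{equation*}
  \left(\eta_{\rm str}^k\right)^2 = \sum_{T\in\mathcal{T}_h} \left(\eta_{\rm str,T}^k\right)^2 \lesssim \sum_{T\in\mathcal{T}_h} \left(\eta_{\sharp,T}^k\right)^2 = \left(\eta_{\sharp}^k\right)^2,
\end{equation*}
and the conclusion follows upon taking square roots. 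Note that, since Lemma~\ref{lem:local stress estimator} is stated for $d=2$, the present lemma is understood under the same restriction (which is all that is needed for Theorem~\ref{th:global efficiency}).

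Alternatively, one may give a self-contained global proof mirroring that of Lemma~\ref{lem:local stress estimator}: starting from $\eta_{\rm str}^k = \lVert \VEC{\sigma}_{h,\rm dis}^k - \VEC{\sigma}(\VEC{u}_h^k)\rVert$, use the partition of unity to write $\VEC{\sigma}_{h,\rm dis}^k - \VEC{\sigma}(\VEC{u}_h^k) = \sum_{\VEC{a}\in\mathcal{V}_h}\bigl(\VEC{\sigma}_{h,\rm dis}^{\VEC{a},k} - \psi_{\VEC{a}}\VEC{\sigma}(\VEC{u}_h^k)\bigr)$ (each summand extended by zero outside $\omega_{\VEC{a}}$), bound $\lVert\,\cdot\,\rVert^2$ by $\sum_{\VEC{a}\in\mathcal{V}_h}\lVert \VEC{\sigma}_{h,\rm dis}^{\VEC{a},k} - \psi_{\VEC{a}}\VEC{\sigma}(\VEC{u}_h^k)\rVert_{\omega_{\VEC{a}}}^2$ using the bounded overlap of the vertex patches, then apply the per-vertex estimate \eqref{eq:result of Botti2018} together with the bounds on $\mathcal{I}_1,\dots,\mathcal{I}_5$ derived in the proof of Lemma~\ref{lem:local stress estimator}, and finally re-aggregate the resulting element- and face-indexed sums into $\eta_{\sharp}^k$, once more invoking the bounded overlap of the patches.

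In either route there is no genuine obstacle left: the substantive content — the construction of the nonconforming spaces $\VEC{M}_h^{\VEC{a}}$, the estimate \eqref{eq:result of Botti2018}, and the treatment of the boundary contributions $\mathcal{I}_3$, $\mathcal{I}_4$, $\mathcal{I}_5$ — has already been carried out at the local level. The only point requiring (routine) care is that all overlap constants entering the aggregation depend solely on the shape-regularity of the mesh, so that, consistently with the convention of Section~\ref{sec:efficiency}, the hidden constant in the final bound is independent of $h$ and of $\gamma_0$.
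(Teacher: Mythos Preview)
Your proposal is correct and follows exactly the paper's own approach: the paper's proof consists of the single sentence ``It is an immediate consequence of Lemma~\ref{lem:local stress estimator},'' which is precisely the $\ell^2$-aggregation argument you spell out in your first paragraph. Your alternative global route and the remark on the $d=2$ restriction are accurate but go beyond what the paper records.
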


\begin{proof}
    It is an immediate consequence of Lemma~\ref{lem:local stress estimator}.
\end{proof}

Finally, \eqref{eq:thesis global efficiency1} follows by combining the results of Lemmas~\ref{lem:global sharp estimator} and \ref{lem:global stress estimator}, and \eqref{eq:thesis global efficiency2} by using also the global stopping criterion of Line~\ref{alg:global stopping criterion} of Algorithm~\ref{algorithm}.

\section*{Acknowledgements}

Funded by the European Union (ERC Synergy, NEMESIS, project number 101115663).
Views and opinions expressed are however those of the author(s) only and do not necessarily reflect those of the European Union or the European Research Council Executive Agency. Neither the European Union nor the granting authority can be held responsible for them.

\bibliography{refs_tresca-apost}
\bibliographystyle{siam}
\nocite{*}

\end{document}